\documentclass[a4paper,10pt]{article}

%%% Packages
\usepackage[utf8]{inputenc}
\usepackage[english]{babel}
\usepackage{amsmath,amssymb,amsthm} % amsfonts
\usepackage{hyperref,xcolor,fullpage}
\usepackage{titlesec}

%%% Styling
\makeatletter \let\@fnsymbol\@arabic \makeatother % don't like that cross next to names
\titleformat{\section}[block]
	{\normalfont\large\bfseries\centering}
	{\thesection}{.5em}{}
\titleformat{\subsection}[runin]
	{\normalfont\bfseries}
	{\thesubsection}{.5em}{}
\titleformat{\subsubsection}[runin]
	{\normalfont\bfseries}
	{\thesubsubsection}{.5em}{}
 
\allowdisplaybreaks % allows break in multiline formulas
\usepackage[numbers]{natbib} \setlength{\bibsep}{4pt} % spacing of references

%%% Theorems, Definitions, etc.
\newtheorem{theorem}{Theorem}[section]
\newtheorem{lemma}[theorem]{Lemma}

\newtheorem{corollary}[theorem]{Corollary}

\newtheorem{proposition}[theorem]{Proposition}
\newtheorem{remark}[theorem]{Remark}

%%% Nice looking \widebar for complements of sets
\makeatletter
\let\save@mathaccent\mathaccent
\newcommand*\if@single[3]{
	\setbox0\hbox{${\mathaccent"0362{#1}}^H$}%
	\setbox2\hbox{${\mathaccent"0362{\kern0pt#1}}^H$}%
	\ifdim\ht0=\ht2 #3\else #2\fi }
\newcommand*\rel@kern[1]{\kern#1\dimexpr\macc@kerna}
\newcommand*\widebar[1]{\@ifnextchar^{{\wide@bar{#1}{0}}}{\wide@bar{#1}{1}}}
\newcommand*\wide@bar[2]{\if@single{#1}{\wide@bar@{#1}{#2}{1}}{\wide@bar@{#1}{#2}{2}}}
\newcommand*\wide@bar@[3]{
	\begingroup
	\def\mathaccent##1##2{
		\let\mathaccent\save@mathaccent
		\if#32 \let\macc@nucleus\first@char \fi
		\setbox\z@\hbox{$\macc@style{\macc@nucleus}_{}$}
		\setbox\tw@\hbox{$\macc@style{\macc@nucleus}{}_{}$}
		\dimen@\wd\tw@ \advance\dimen@-\wd\z@ \divide\dimen@ 3 \@tempdima\wd\tw@ \advance\@tempdima-\scriptspace \divide\@tempdima 10 \advance\dimen@-\@tempdima \ifdim\dimen@>\z@ \dimen@0pt \fi \rel@kern{0.6}\kern-\dimen@
		\if#31 \overline{\rel@kern{-0.6}\kern\dimen@\macc@nucleus\rel@kern{0.4}\kern\dimen@} \advance\dimen@0.4\dimexpr\macc@kerna \let\final@kern#2 \ifdim\dimen@<\z@ \let\final@kern1 \fi
		\if \final@kern1 \kern-\dimen@ \fi
		\else \overline{\rel@kern{-0.6}\kern\dimen@#1} \fi }
	\macc@depth\@ne	\let\math@bgroup\@empty \let\math@egroup\macc@set@skewchar 	\mathsurround\z@ \frozen@everymath{\mathgroup\macc@group\relax} 	\macc@set@skewchar\relax \let\mathaccentV\macc@nested@a	\if#31 \macc@nested@a\relax111{#1} \else \def\gobble@till@marker##1\endmarker{} \futurelet\first@char\gobble@till@marker#1\endmarker \ifcat\noexpand\first@char A\else \def\first@char{} \fi \macc@nested@a\relax111{\first@char} \fi
	\endgroup }
\makeatother

%%% Probabilities
\newcommand{\PP}[1]{\mathbb{P}\left(#1\right)}

\newcommand{\EE}[1]{\mathbb{E}\!\left(#1\right)}

\newcommand{\Var}[1]{\text{Var}\!\left(#1\right)}

%%% Bold Symbols
\newcommand{\ba}{\textbf{a}}
\newcommand{\bb}{\textbf{b}}
\newcommand{\bc}{\textbf{c}}
\newcommand{\bx}{\textbf{x}}
\newcommand{\by}{\textbf{y}}

%%$ Various other Symbols
\usepackage{mathrsfs}
\DeclareMathAlphabet{\mathpzc}{OT1}{pzc}{m}{it}
\usepackage{xifthen}
\newcommand{\fp}[1][]{ \ifthenelse{\isempty{#1}}{\mathpzc{p}}{\mathpzc{p}(#1)} }
\newcommand{\fP}{\mathpzc{P}}

\newcommand{\mM}{\mathcal{M}}
\newcommand{\mS}{\mathcal{S}}
\newcommand{\mH}{\mathcal{H}}

\newcommand{\mF}{\mathcal{F}}

\newcommand{\mT}{\mathcal{T}}

\newcommand{\mI}{\mathcal{I}}

\newcommand{\NN}{\mathbb{N}}
\newcommand{\ZZ}{\mathbb{Z}}

\newcommand{\FF}{\mathbb{F}}

%%% Rank, Subsystem, Volume, Degree
\newcommand{\rank}[1]{\text{rk} ( {#1} )}
\newcommand{\Sub}[2]{{#1} [{#2}]}

%%% Floor and Ceiling
\newcommand{\floor}[1]{\lfloor {#1} \rfloor}

%%% Commentary Commands
\definecolor{darkred}{cmyk}{.3,.9,.80,.2}
 % missing reference
 % missing content
 % note

%%% Title, Date and Authors
\title{A Note on Sparse Supersaturation and Extremal Results\\ for Linear Homogeneous Systems}
\date{}
\author{	Christoph Spiegel \thanks{Universitat Polit\`ecnica de Catalunya and Barcelona Graduate School of Mathematics, Department of Mathematics, Edificio Omega, 08034 Barcelona, Spain. E-mail: {\tt christoph.spiegel@upc.edu}. Supported by the Spanish Ministerio de Econom\'{i}a y Competitividad FPI grant under the project MTM2014-54745-P.}}

\begin{document}
\maketitle

\begin{abstract}
	We study the thresholds for the property of containing a solution to a linear homogeneous system in random sets. We expand a previous sparse Szémeredi-type result of Schacht to the broadest class of matrices possible. We also provide a shorter proof of a sparse Rado result of Friedgut, Rödl, Ruciński and Schacht based on a hypergraph container approach due to Nenadov and Steger. Lastly we further extend these results to include some solutions with repeated entries using a notion of non-trivial solutions due to Rúzsa as well as Rué et al.
\end{abstract}

\section{Introduction}

A \emph{$k$-term arithmetic progression} is a set of integers that can be written as $\{ a, a+d, a+(k-1)d \}$ for some $a, d, k \in \ZZ$, $k \geq 3$ and $d \neq 0$.  The Theorem of van der Waerden~\cite{vdW27} states that every finite colouring of $[n] = \{1, \dots , n \}$ contains a monochromatic $k$-term arithmetic progression for $n$ large enough. Szemerédi's Theorem~\cite{Sz75} strengthened this result by stating that every set of integers with positive natural density contains a $k$-term arithmetic progression. Rado~\cite{Ra33} generalized van der Waerden's result to certain systems of linear equations and Frankl, Graham and Rödl~\cite{FGR88} did the same for Szemerédi's extremal result.

A common area of interest is to study 'sparse' or 'random' versions of such results. Consider the \emph{binomial random set} $[n]_p$ where each element in $[n]$ is chosen independently with probability $p = p(n)$. That is $[n]_p$ is a random variable sampling from the finite probability space on all subsets of $[n]$ that assigns each $T \subseteq [n]$ the probability $\PP{[n]_ p = T} = p^{|T|} (1-p)^{n-|T|}$. Given an integer valued matrix $A \in \mM_{r \times m}(\ZZ)$ with $r$ rows and $m$ columns, we largely follow the notation of Schacht~\cite{Sch12} and let $\mS(A) = \{ \bx \in \ZZ^m : A \cdot \bx^T = \mathbf{0}^T \}$ be the set of all solutions and $\mS_0(A) = \{ \bx = (x_1, \dots , x_m) \in \mS(A) : x_i \neq x_j \text{ for } i \neq j \}$ the set of all \emph{proper} solutions. We call $A$ \emph{irredundant} if $\mS_0(A) \neq \emptyset$. Given a set of integers $T$ and $s \in \NN$ we write
\begin{equation}
	T \to_s A
\end{equation}
if for every finite partition $T_1 \dot{\cup} \dots \dot{\cup} \, T_s = T$ there exists $1 \leq i \leq s$ such that $T_i \cap \mS_0(A) \neq \emptyset$. An irredundant matrix $A \in \mM_{r \times m}(\ZZ)$ is \emph{partition regular} if for every $s \in \NN$ we have $[n] \to_s A$ for $n$ large enough. Rado~\cite{Ra33} gave the \emph{column condition} as a characterization of partition regular matrices.

Rödl and Ruci\'nski~\cite{RR97} formulated a sparse version of Rado's Theorem that was later completed by Friedgut, Rödl and Schacht~\cite{FRS10}. To state it let $\emptyset \neq Q \subseteq [m]$ be any set of column indices and define $r_Q = \rank{A} - \rank{A^{\widebar{Q}}}$ where $A^{\widebar{Q}}$ is the matrix obtained by keeping only the columns which are indexed by $\widebar{Q}$ and the rank of a matrix $A$ is denoted as $\rank{A}$. Here $A^{\emptyset}$ is the empty matrix with $ \rank{A^{\emptyset}} = 0$. The \emph{maximum $1$-density} of a given matrix $A \in \mM_{r \times m}(\ZZ)$ is defined as
	\begin{equation} \label{eq:max1density}
		m_1(A) = \max_{\substack{Q \subseteq [m] \\ 2 \leq |Q|}} \frac{|Q|-1}{|Q|-r_Q-1}.
	\end{equation}
	We will later see that for the specific kinds of matrices under consideration, this is indeed well-defined, that is $|Q|-r_Q-1 > 0$ for all $Q \subseteq[m]$ satisfying $|Q| \geq 2$. This allows us to state the following sparse version of Rado's Theorem.

\begin{theorem}[Rödl and Ruci\'nski~\cite{RR97}, Friedgut, Rödl and Schacht~\cite{FRS10}] \label{thm:sparsepartition}
	For every $r,m,s \in  \NN$ and partition regular matrix $A \in \mM_{r \times m}(\ZZ)$ there exist constants $c = c(A,s)$ and $C = C(A,s)$ such that
	\begin{equation*}
		\lim_{n \to \infty} \PP{[n]_{p} \to_s A} = \left\{\begin{array}{ll}
        0 & \quad \text{if } p(n) \leq c \, n^{-1/m_1(A)}, \\
        1 & \quad \text{if } p(n) \geq C \, n^{-1/m_1(A)}.
        \end{array}\right. 
	\end{equation*}
\end{theorem}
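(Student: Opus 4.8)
The plan is to prove the two halves separately, with the $1$-statement as the substantial one and the only place where the container method enters. Write $r=\rank{A}$; for an irredundant partition regular matrix one has $m-r\geq2$ (this is the promised inequality $|Q|-r_Q-1>0$ at $Q=[m]$), so $A$ has $\Theta(n^{m-r})$ proper solutions in $[n]$, and, more generally, prescribing the values of the coordinates indexed by a set $Q$ with $|Q|=j$ leaves $\Theta(n^{(m-r)-j+r_Q})$ completions to a proper solution. Let $\mathcal{H}_n$ be the $m$-uniform hypergraph on $[n]$ whose edges are the proper solutions of $A$, so that a set meeting no proper solution --- an \emph{$A$-free} set --- is exactly an independent set of $\mathcal{H}_n$. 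These degree estimates show that the hypergraph container conditions hold for $\mathcal{H}_n$ with co-degree parameter $\tau=\tau(n)$ of order $n^{-1/m_1(A)}$: the exponent $1/m_1(A)=\min_{|Q|\geq2}\frac{|Q|-r_Q-1}{|Q|-1}$ is precisely the one balancing the bounds $\tau^{\,j-1}\gtrsim d_j(\mathcal{H}_n)/d_1(\mathcal{H}_n)$ over all $Q$ with $|Q|\geq2$, and well-definedness of~\eqref{eq:max1density} is exactly what forces $\tau\to0$.

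For the $1$-statement, fix $p(n)\geq C\,n^{-1/m_1(A)}$; by monotonicity we may assume $p(n)=C\,n^{-1/m_1(A)}$, so that $p\asymp\tau$. Two ingredients are combined. First, the quantitative version of Rado's theorem due to Frankl, Graham and Rödl~\cite{FGR88}: there is $\delta=\delta(A,s)>0$ such that, for $n$ large, every $s$-colouring of $[n]$ yields at least $\delta\,n^{m-r}$ monochromatic proper solutions of $A$. Second, the online hypergraph container lemma, in the form used by Nenadov and Steger, applied to the $m$-uniform hypergraph $\mathcal{G}_n$ on $[n]\times[s]$ whose edges are the sets $\{(x_1,c),\dots,(x_m,c)\}$ over proper solutions $\bx$ and colours $c\in[s]$ --- which satisfies the same co-degree conditions as $\mathcal{H}_n$ up to the constant $s$. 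The lemma provides, for each $\varepsilon>0$, a constant $C_\varepsilon$ and a map sending every \emph{fingerprint} $T\subseteq[n]\times[s]$ with $|T|\leq C_\varepsilon\,\tau n$ to a \emph{container} $f(T)\subseteq[n]\times[s]$ spanning at most $\varepsilon\,n^{m-r}$ edges of $\mathcal{G}_n$, in such a way that every independent set $I$ of $\mathcal{G}_n$ satisfies $T(I)\subseteq I\subseteq f(T(I))$ for some such fingerprint. Now a colouring $\chi$ of $[n]_p$ witnessing $\neg([n]_p\to_s A)$ yields the transversal independent set $I_\chi=\{(v,\chi(v)):v\in[n]_p\}$; set $T=T(I_\chi)$, $D=f(T)$, and $W_D=\{v:(v,c)\in D\text{ for some }c\}$. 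Then $[n]_p\subseteq W_D$, and colouring $[n]$ by assigning each $v\in W_D$ an admissible colour (the rest arbitrarily), the number of monochromatic proper solutions is at most $\varepsilon\,n^{m-r}$ (those lying inside $D$) plus $O_{A,s}(|[n]\setminus W_D|\cdot n^{m-r-1})$ (those using a coordinate outside $W_D$); comparing with $\delta\,n^{m-r}$ forces $|[n]\setminus W_D|\geq\eta n$ once $\varepsilon<\delta$, for some $\eta=\eta(A,s)>0$.

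It remains to estimate the probability of this scenario. The fingerprint $T$ is transversal, hence determined by its $[n]$-projection $T'\subseteq[n]_p$, $|T'|\leq C_\varepsilon\,\tau n$, together with an $[s]$-colouring of $T'$; moreover $T'\subseteq W_D$, so the events $\{T'\subseteq[n]_p\}$ and $\{[n]_p\cap([n]\setminus W_D)=\emptyset\}$ are independent. Hence
\begin{equation*}
	\PP{\neg([n]_p\to_s A)}\;\leq\;\sum_{t\leq C_\varepsilon\tau n} s^{t}\binom{n}{t}\,p^{t}\,(1-p)^{\eta n}\;\leq\;\mathrm{e}^{-\eta p n}\sum_{t\leq C_\varepsilon\tau n}\binom{n}{t}(s p)^{t}.
\end{equation*}
The crucial point is that only fingerprints of size $\leq C_\varepsilon\tau n$ are enumerated --- not all $\binom{n}{\leq C_\varepsilon\tau n}$ of them, which is where the spurious $\log n$ factor one gets from directly bounding the number of containers would creep in --- and since $pn$ and $\tau n$ are both of order $n^{1-1/m_1(A)}$ with $pn\gg C_\varepsilon\tau n$ for $C$ large, the truncated sum stays below its peak at $t\approx pn$ and is at most $\mathrm{e}^{\eta pn/2}$ once $C$ is large enough. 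Choosing $\varepsilon$ small and then $C=C(A,s)$ sufficiently large therefore yields $\PP{\neg([n]_p\to_s A)}\leq\mathrm{e}^{-\eta pn/2}=\exp\!\big(-\Omega(n^{1-1/m_1(A)})\big)\to0$, which is the $1$-statement.

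For the $0$-statement, let $p(n)\leq c\,n^{-1/m_1(A)}$. A routine first-moment computation shows that a.a.s.\ $[n]_p$ contains no configuration that is dense with respect to $m_1(A)$ --- no vertex set $U$ spanning, on some coordinate set $Q$, at least $|U|$ partial solutions of $A$ --- this being exactly the range in which such configurations remain rare. From this one deduces, after deleting a sublinear set of exceptional elements and applying the Lovász Local Lemma (much as in Rödl and Ruciński~\cite{RR97}), that the proper solutions of $A$ inside $[n]_p$ can be $s$-coloured with no monochromatic one, i.e.\ $\neg([n]_p\to_s A)$. I expect the main obstacles to be, for the $1$-statement, checking that the container co-degree conditions are governed by exactly the maximum appearing in~\eqref{eq:max1density} (so that indeed $\tau\asymp p$) and then pushing the supersaturation bound through the deletion of a linear-sized subset --- the step that forces the container support to omit $\Omega(n)$ vertices and hence makes the probability small enough to outweigh the number of admissible fingerprints --- and, for the $0$-statement, the first-moment bookkeeping needed to show that dense configurations are a.a.s.\ absent below the threshold.
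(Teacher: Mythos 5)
Your $1$-statement argument is essentially the paper's: both combine the Frankl--Graham--R\"odl supersaturation result with a hypergraph-container argument in the style of Nenadov and Steger, and both arrive at the same $e^{-\Omega(np)}\sum_{t\lesssim \tau n}\binom{n}{t}(O(1)\,p)^t$ bound by exploiting the independence of the fingerprint event from the ``container complement is empty'' event. The only cosmetic difference is that you apply the container lemma once to the product hypergraph on $[n]\times[s]$ and read off a single fingerprint per bad colouring, whereas the paper applies containers to the solution hypergraph on $[n]$ and enumerates an $s$-tuple of fingerprints (one per colour class); these are the two standard formulations of the Nenadov--Steger trick and yield the same estimate. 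Note that the paper does not reprove the $0$-statement of this theorem at all --- it is cited from R\"odl--Ruci\'nski and Friedgut--R\"odl--Schacht and then extended in Theorem~\ref{thm:sparsepartitionnontrivial} --- so your first-moment-plus-Local-Lemma sketch of it, while in the right spirit, is both well beyond what the paper supplies and far too compressed to count as a proof of that genuinely difficult half.
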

The current proof of this theorem is quite involved. A first goal of this note is to provide a short proof of the $1$-statement in Theorem~\ref{thm:sparsepartition} following the ideas of Nenadov and Steger's short proof of a sparse Ramsey Theorem~\cite{NS14}. This approach combines the recently developed hypergraph container framework by Balogh, Morris and Samotij~\cite{BMS14} as well as Saxton and Thomason~\cite{ST15} with a supersaturation result of Frankl, Graham and Rödl~\cite{FGR88}.

\medskip
	
Schacht~\cite{Sch12} as well as independently Conlon and Gowers~\cite{CG10} also stated a sparse version of Szémeredi's Theorem. Schacht also extended it to density regular systems as well as Schur triples. Given a set of integers $T$ and $\epsilon > 0$ we write
\begin{equation}
	T \to_{\epsilon} A	
\end{equation}
if every subset $S$ for which $|S|/|T| \geq \epsilon$ satisfies $S \cap \mS_0(A) \neq \emptyset$. An irredundant matrix $A \in \mM_{r \times m}(\ZZ)$ is \emph{density regular} if for all $\epsilon > 0$ we have $[n] \to_{\epsilon} A$ for $n$ large enough. Frankl, Graham and Rödl~\cite{FGR88} characterized density regular systems as those that are \emph{invariant}, that is they satisfy $A \cdot \mathbf{1} = \mathbf{0}$. Lastly, we say that a matrix $A \in \mM_{r \times m}(\ZZ)$ is \emph{positive} if $\mS(A) \cap \NN^m \neq \emptyset$ and \emph{abundant} if any $r \times (m-2)$ submatrix obtained from $A$ by deleting two columns has the same rank as $A$. Every density regular system is clearly partition regular and partition regular systems are irredundant and positive by definition. We will later see in Lemma~\ref{lemma:partitionabundant} that they are also abundant.

The second goal of this note is to extend Schacht's statement to the broadest group of matrices possible. To prove this generalization we derive a supersaturation result from a removal lemma due to Král', Serra and Vena~\cite{KSV12} and combine it with a corollary of the hypergraph containers due to Balogh, Morris and Samotij~\cite{BMS14}.

Given some matrix $A \in \mM_{r \times m}(\ZZ)$ let $\mathrm{ex}(n,A)$ be the size of the largest subset of $[n]$ not containing a proper solution and define $\pi (A) = \lim_{n \to \infty} \mathrm{ex}(n,A)/n$. Observe the clear parallels to the Turán number of a graph. Clearly density regular systems satisfy $\pi(A) = 0$ and for other systems systems we have $\pi(A) > 0$. One can easily bound this value away from $1$, as we will later see in Lemma~\ref{lemma:pibound}. These definitions and observations allow us to state the following sparse extremal result.

\begin{theorem} \label{thm:sparsedensity}
	For every $\epsilon > \pi(A)$, $r,m \in  \NN$ such that $m \geq 3$ and matrix $A \in \mM_{r \times m}(\ZZ)$ there exist constants $c = c(A,\epsilon)$ and $C = C(A,\epsilon)$ such that the following holds. If $A$ is irredundant, positive and abundant then 
	\begin{equation*}
		\lim_{n \to \infty} \PP{[n]_{p} \to_{\epsilon} A} = \left\{\begin{array}{ll}
        0 & \quad \text{if } p(n) \leq c \, n^{-1/m_1(A)}, \\
        1 & \quad \text{if } p(n) \geq C \, n^{-1/m_1(A)}.
        \end{array}\right. 
	\end{equation*}
\end{theorem}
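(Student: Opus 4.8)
The plan is to split into the standard two statements. The $0$-statement is the easy direction: when $p \leq c\,n^{-1/m_1(A)}$, I would use a first-moment / deletion argument. Pick $Q \subseteq [m]$ attaining the maximum in the definition of $m_1(A)$. The expected number of proper solutions to $A$ supported on $[n]_p$ whose entries range over columns in $Q$ (more precisely, solutions determined by $|Q|$ free coordinates, contributing roughly $n^{|Q|-r_Q}$ solutions, each surviving with probability $p^{|Q|}$) is of order $n^{|Q|-r_Q}p^{|Q|}$; meanwhile $|[n]_p| \approx pn$. Since $m_1(A) = (|Q|-1)/(|Q|-r_Q-1)$, choosing $c$ small makes the expected number of solutions a tiny fraction of $pn$, so after deleting one element from each proper solution we are left with a subset of density $\geq \epsilon$ (using $\epsilon > \pi(A) = 0$ is irrelevant here — we just need $\epsilon$ fixed and the deleted fraction $o(1)$ relative to $|[n]_p|$), witnessing $[n]_p \not\to_\epsilon A$ with probability bounded away from $0$; a second-moment or Janson-type concentration upgrades this to probability $\to 1$. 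One subtlety: the leftover set must also avoid solutions entirely, so I should delete an element from \emph{every} proper solution, not just those of a single type $Q$ — but the total expected count over all $Q$ is still $o(pn)$ for $c$ small, since every term is of the form $n^{|Q'|-r_{Q'}}p^{|Q'|}$ and $p \le c n^{-1/m_1(A)}$ with $m_1(A) \ge (|Q'|-1)/(|Q'|-r_{Q'}-1)$ for all valid $Q'$.

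For the $1$-statement the strategy, as the authors indicate, is to combine a supersaturation result with hypergraph containers. First I would establish a \emph{supersaturation lemma}: there is $\delta = \delta(A,\epsilon) > 0$ such that every subset $S \subseteq [n]$ with $|S| \geq \epsilon n$ contains at least $\delta\, n^{m-r}$ proper solutions to $A$ (for $n$ large). Since $\epsilon > \pi(A)$, the extremal definition guarantees $S$ contains \emph{at least one} proper solution once $n$ is large; the removal lemma of Král', Serra and Vena~\cite{KSV12} promotes "at least one" to "a positive density" in the usual way (if $S$ had only $o(n^{m-r})$ proper solutions, the removal lemma would let us delete $o(n)$ elements of $S$ to destroy all of them, contradicting $|S| \ge \epsilon n > \pi(A) n + o(n)$). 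Here I need $A$ irredundant so that $\mathcal{S}_0(A) \neq \emptyset$ and the count $n^{m-r}$ is the right order of magnitude, and I should cite Lemma~\ref{lemma:pibound} to know $\pi(A) < 1$ so that the range $\epsilon > \pi(A)$ is nonempty. Then I would invoke the container theorem of Balogh, Morris and Samotij~\cite{BMS14} — or rather its standard corollary — applied to the $m$-uniform hypergraph on vertex set $[n]$ whose edges are the (supports of) proper solutions of $A$. The balanced-supersaturation / co-degree conditions needed to apply the container corollary are exactly governed by the quantities $|Q|-r_Q-1$, which is why $m_1(A)$ is the relevant exponent; here the hypotheses that $A$ is \emph{positive} (so solutions genuinely exist over $[n]$, giving $\Theta(n^{m-r})$ edges) and \emph{abundant} (so that removing two columns does not drop the rank, which controls the pair-degrees of the hypergraph and ensures $|Q| - r_Q - 1 > 0$ for all $|Q| \ge 2$, making $m_1(A)$ well-defined as promised in the text) are used. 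The container corollary then yields a family of $2^{o(n)}$... — more precisely $\exp(O(n^{1-\eta}/(\text{stuff})))$ — "container" sets, each of size $\le (\pi(A)+o(1))n < \epsilon n$ or close to it, such that every $\mathcal{S}_0(A)$-free subset of $[n]$ lies in one container. Finally, a union bound over containers: for $p \geq C\,n^{-1/m_1(A)}$ with $C$ large, the probability that $[n]_p$ lies inside any fixed container of size at most $(1-c')n$ is exponentially small — small enough to beat the number of containers — so with probability $\to 1$ every subset of $[n]_p$ of density $\geq \epsilon$ (equivalently, $[n]_p$ itself minus few elements) must hit $\mathcal{S}_0(A)$, i.e. $[n]_p \to_\epsilon A$.

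The main obstacle, and the place where I expect the real work to sit, is the supersaturation step and verifying the precise hypotheses of the container corollary. The removal lemma of~\cite{KSV12} is stated for translation-invariant or homogeneous systems under certain genericity conditions, and one must check it applies to an arbitrary irredundant, positive, abundant $A$ — in particular handling systems with $r > 1$ and with nontrivial rank interactions among columns, which is precisely the generality this note is pushing toward. Equally delicate is confirming that the "balanced supersaturation" hypothesis of the container method holds with the correct exponent: one must show that for every $Q$ the number of proper solutions using a prescribed set of $|Q|$ coordinates behaves like $n^{|Q|-r_Q}$ and that no localized over-concentration of solutions occurs, which is where abundance ($\mathrm{rk}$ stable under deleting two columns) does the essential bookkeeping. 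Once those two inputs are in place, the union-bound / probabilistic wrap-up is routine and mirrors Nenadov–Steger and Schacht. I would also need to double-check that the constant $c'$ giving container size $\le (1-c')n$ can be taken uniform in $\epsilon > \pi(A)$ (it can, since $\pi(A)$ is a fixed constant $< 1$ and $\epsilon$ is bounded below), so that the final probability estimate closes.
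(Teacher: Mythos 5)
Your plan for the $1$-statement is essentially the paper's: supersaturation via the Kr\'al'--Serra--Vena removal lemma (the paper's Lemma~\ref{lemma:extendeddensitysupersaturation}, proved by passing to $\FF_q$ for a suitable prime $q$), degree bounds keyed to the exponents $|Q|-r_Q$ (Lemma~\ref{lemma:degreeupperbound}), and hypergraph containers. The one cosmetic difference is that the paper feeds these two ingredients into the packaged statement Theorem~\ref{thm:sparsehypergraphcontainer} (Theorem~5.2 of Balogh--Morris--Samotij), which already performs the union-bound/probabilistic wrap-up, whereas you sketch applying the raw container theorem followed by a manual union bound; both are fine, and the paper does the manual version for the partition variant in Section~\ref{sec:sparsepartition1statement}.

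The $0$-statement sketch has a genuine gap, concentrated in your ``one subtlety'' remark. You correctly pick $Q$ attaining $m_1(A)$ and observe that the number of solutions to the \emph{subsystem} $\Sub{A}{Q}$ in $[n]_p$ is of order $c^{|Q|-1}\,np$, a small constant fraction of $np$. But you then worry that deleting one element per such solution does not eliminate all proper solutions to $A$, and propose to instead delete from ``every proper solution, not just those of a single type $Q$,'' claiming the total over all $Q$ is still $o(pn)$. This is backwards on both counts. First, the Rödl--Ruciński subsystem lemma (of which Lemma~\ref{lemma:nosolutions} is the non-trivial-solution analogue) says precisely that if a set avoids $\mS_0(\Sub{A}{Q})$ then it also avoids $\mS_0(A)$; deleting from the single optimal subsystem therefore \emph{does} suffice, and the paper performs exactly this replacement (it swaps $A$ for $\Sub{A}{Q_2}$ before counting). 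Second, your claimed bound fails: for a non-optimal $Q'$ one has $n^{|Q'|-r_{Q'}-1}p^{|Q'|-1} = c^{|Q'|-1}n^{(|Q'|-r_{Q'}-1)-(|Q'|-1)/m_1(A)}$ with a \emph{nonnegative} exponent (by maximality of $m_1$), so for strictly sub-optimal $Q'$ this tends to infinity, not zero --- summing over all $Q$ is hopeless. In addition, $\to_\epsilon$ is not a monotone property in $p$, so the paper runs a three-case alteration (Proposition~\ref{prop:densityproper0}): a pure first-moment argument when $p \ll n^{-1/m(A)}$, a first-moment-plus-deletion argument when $n^{-1}\ll p \ll n^{-1/m_1(A)}$, and a second-moment Chebyshev concentration only in the remaining window $n^{-1/m(A)}\ll p \le c\,n^{-1/m_1(A)}$ where the solution count is a genuine constant fraction of $np$. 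Your sketch collapses these into one regime and invokes ``second-moment or Janson'' where in fact the boundary cases need separate and easier treatments.
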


\medskip

For singe-line equations it is common in combinatorial number theory to not just limit oneself to proper solutions, that is solutions with no repeated entries, but to also consider certain non-trivial solutions which may have some repeated entries. Ruzsa~\cite{Rz93} gave a definition for non-trivial solutions in this scenario and more recently Rué, S. and Zumalacárregui~\cite{RSZ15} extended it to include arbitrary homogeneous linear systems of equations. A third and final goal will therefore be to extend the previously stated sparse results to include non-trivial solutions. We need to introduce some notation in order to give a formal definition of this notion.

Given a solution $\bx = (x_1, \dots, x_m) \in \mS(A)$ let
\begin{equation}
	\fp[\bx] = \big\{ \{ 1 \leq j \leq m : x_i = x_j \} : 1 \leq i \leq m \big\}
\end{equation}
denote the set partition of the column indices $[m]$ indicating the repeated entries in $\bx$. Note that for $\bx \in \mS_0(A)$ we have $\fp[\bx] = \{ \{1\}, \dots, \{m\} \}$. Given some set partition $\fp$ of $\{1, \dots ,m\}$, let $A_{\fp}$ denote the matrix obtained by summing up the columns of $A$ according to $\fp$, that is for $\fp = \{ T_1, \dots, T_s \}$ such that $\min (T_1) < \dots < \min(T_s)$ for some $1 \leq s \leq m$ and $\bc_i$ the $i$-th column vector of $A$ for every $i \in [m]$, we have  
\begin{equation}
	A_{\fp} = \left( \begin{array}{ccccccc}
		\sum\limits_{i \in T_1} \! \bc_i & \Big| & \sum\limits_{i \in T_2} \! \bc_i & \Big| & \dots & \Big| & \sum\limits_{i \in T_s} \! \bc_i
 	\end{array} \right).
\end{equation}
A solution $\bx \in \mS(A)$ is now defined to be \emph{non-trivial} if $\rank{A_{\fp[\bx]}} = \rank{A}$. We denote the set of all non-trivial solutions by $\mS_1(A) = \{ \bx \in \mS(A) : \rank{A_{\fp[\bx]}} = \rank{A} \}$, that is we have $\mS(A) \supseteq \mS_1(A) \supseteq \mS_0(A)$. Lastly, given a set of integers $T$, an integer $s \in \NN$ and some $\epsilon > 0$, we write
\begin{equation}
	T \to^{\star}_s A
\end{equation}
if for every finite partition $T_1 \dot{\cup} \dots \dot{\cup} \, T_s = T$ there exists $1 \leq i \leq s$ such that $T_i \cap \mS_1(A) \neq \emptyset$ and
\begin{equation}
	T \to^{\star}_{\epsilon} A	
\end{equation}
if every subset $S$ for which $|S|/|T| \geq \epsilon$ also satisfies $S \cap \mS_1(A) \neq \emptyset$. This is just a direct extension of the previous notation to include non-trivial solutions. We now have the following two statements.
\begin{theorem} \label{thm:sparsepartitionnontrivial}
	For every $r,m,s \in \NN$ and partition regular matrix $A \in \mM_{r \times m}(\ZZ)$ there exists a constant $c = c(A,s)$ such that 
	\begin{equation*}
		\lim_{n \to \infty} \PP{[n]_{p} \to^{\star}_{s} A} = 0 \quad \text{if } p(n) \leq c \, n^{-1/m_1(A)}.
	\end{equation*}
\end{theorem}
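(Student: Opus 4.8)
The plan is to reduce the statement to the $0$-statement of Theorem~\ref{thm:sparsepartition} for \emph{proper} solutions, applied to a finite family of ``collapsed'' matrices, after rephrasing everything as a hypergraph colouring problem. For $R\subseteq[n]$ let $\mH(R)$ be the hypergraph on vertex set $R$ whose edges are the supports $\{x_1,\dots,x_m\}$ of the non-trivial solutions $\bx\in\mS_1(A)$ with all entries in $R$; then $R\to^{\star}_{s}A$ holds exactly when $\mH(R)$ admits no proper vertex $s$-colouring. By definition of $\mS_1$, a non-trivial solution $\bx$ with collapse pattern $\fp[\bx]=\fp=\{T_1,\dots,T_k\}$ takes $k$ distinct values which form a \emph{proper} solution of the $r\times k$ matrix $A_{\fp}$, and $\rank{A_{\fp}}=\rank{A}$; conversely every proper solution of such an $A_{\fp}$ lifts back to a non-trivial solution of $A$ with the same support. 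Hence $\mH(R)=\bigcup_{\fp}\mH_{\fp}(R)$, a union over the finitely many set partitions $\fp$ of $[m]$ with $\rank{A_{\fp}}=\rank{A}$, where $\mH_{\fp}(R)$ is the hypergraph of supports of proper solutions of $A_{\fp}$ with entries in $R$. Note that since $A$ is partition regular it is positive, irredundant and abundant, so Theorem~\ref{thm:sparsepartition} applies to $A$ itself; the task is to push this through the finite union.

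The core step is a density comparison: for every $\fp$ with $\rank{A_{\fp}}=\rank{A}$ one has $m_1(A_{\fp})\ge m_1(A)$ whenever $m_1(A_{\fp})$ is well-defined, and in the remaining cases $\mH_{\fp}(R)$ is degenerate enough to be dealt with directly. To prove the inequality I would take a set of columns $Q\subseteq[m]$ attaining the maximum in~\eqref{eq:max1density} for $A$ and pass to $Q'=\{\,t:T_t\cap Q\neq\emptyset\,\}$: one has $|Q'|\le|Q|$, and since each column of $A_{\fp}^{\overline{Q'}}$ is a sum of columns of $A^{\overline{Q}}$ one gets $\rank{A_{\fp}^{\overline{Q'}}}\le\rank{A^{\overline{Q}}}$, so the rank defect of $A_{\fp}$ along $Q'$ is at least that of $A$ along $Q$. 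The bound $\tfrac{|Q'|-1}{|Q'|-r_{Q'}-1}\ge\tfrac{|Q|-1}{|Q|-r_{Q}-1}$ then follows because $t\mapsto\tfrac{t-1}{t-r-1}$ is decreasing in $t$ and increasing in $r$; a short case analysis covers the situation where $Q$ collapses inside a single block of $\fp$. In the degenerate cases, a vanishing denominator $|Q'|-r_{Q'}-1\le 0$ forces the columns of $A_{\fp}$ indexed by $Q'$ to be (nearly) linearly independent of the rest, which pins the corresponding variables of every solution to a proper linear subspace --- leaving $\mH_{\fp}(R)$ with no edges in $[n]=\{1,\dots,n\}$, or at worst confined to a one-parameter dilation family that is harmless. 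Consequently each non-degenerate $\mH_{\fp}$ has its own threshold exponent $1/m_1(A_{\fp})\le 1/m_1(A)$, so $p\le c\,n^{-1/m_1(A)}\le c\,n^{-1/m_1(A_{\fp})}$ places us below the threshold of every piece at once.

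With these two steps in hand I would rerun the $0$-statement argument of R{\"o}dl--Ruci{\'n}ski, as completed by Friedgut--R{\"o}dl--Schacht, that underlies Theorem~\ref{thm:sparsepartition}, but for the hypergraph $\bigcup_{\fp}\mH_{\fp}$ rather than for a single system. That argument really establishes a robust local-sparseness property of the solution hypergraph below its threshold --- roughly, that with high probability a bounded ``core'' of vertices can be removed from $[n]_p$ after which the remaining hypergraph admits an $s$-colouring with no monochromatic edge --- and this property is stable under taking a bounded union of such hypergraphs, each of which by the previous step sits below its own threshold, while the degenerate $\mH_{\fp}$ contribute essentially nothing. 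Taking $c$ to be the minimum of the finitely many constants produced this way gives $\lim_{n\to\infty}\PP{[n]_{p}\to^{\star}_{s}A}=0$ whenever $p(n)\le c\,n^{-1/m_1(A)}$.

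The hard part will be the last step carried out \emph{uniformly}: the $0$-statement for even a single partition-regular system is the delicate half of Theorem~\ref{thm:sparsepartition}, and one must ensure that a single colouring of $[n]_p$ simultaneously avoids monochromatic copies of \emph{all} the collapsed systems $A_{\fp}$, rather than recolouring for each in turn, since there is no affordable union bound over the $s^{n}$ colourings. A secondary difficulty is pinning down exactly which collapsed matrices $A_{\fp}$ are degenerate and verifying that these contribute no obstruction on $[n]$; this is where the hypotheses that $A$ is positive and abundant (Lemma~\ref{lemma:partitionabundant}) get used.
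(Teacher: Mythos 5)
Your reduction matches the paper's own proof in Section~\ref{sec:sparsepartition0statement}: replace $A$ by the subsystem $\Sub{A}{Q}$ attaining $m_1(A)$ on the full column set (Lemma~\ref{lemma:nosolutions}), decompose $\mS_1(A)$ by collapse pattern $\fp$, pass to proper solutions of $A_\fp$ (Lemma~\ref{lemma:nontrivial}), and use $\rank{A_\fp}=\rank{A}$ together with $|\fp|<m$ to get $m_1(A_\fp)>m_1(A)$. The interesting point is the combination step that you flag as ``the hard part.'' The paper handles it by asserting, as Equation~\eqref{eq:splittinguppartition},
\[
\PP{[n]_p\to_s^\star A}\;\le\;\PP{\textstyle\bigcup_{\fp\in\fP(A)}\big([n]_p\to_s A_\fp\big)}\;\le\;\sum_{\fp\in\fP(A)}\PP{[n]_p\to_s A_\fp},
\]
and then applying the $0$-statement of Theorem~\ref{thm:sparsepartition} to each $A_\fp$ separately. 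But the first inequality is exactly the gap you point to: Lemma~\ref{lemma:nontrivial} only yields ``for every colouring there is \emph{some} $\fp$ with a monochromatic $\mS_0(A_\fp)$ copy'' (a $\forall\exists$ statement), whereas the event on the right is the strictly stronger $\exists\forall$ statement that a single $\fp$ defeats every colouring. Having, for each $\fp$ separately, a colouring $\chi_\fp$ avoiding $\mS_0(A_\fp)$ does not produce one colouring avoiding all of $\mS_1(A)$, so $[n]_p\to_s^\star A$ does not imply $\bigcup_\fp([n]_p\to_s A_\fp)$.

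Your proposed repair -- rerun the R\"{o}dl--Ruci\'nski/Friedgut--R\"{o}dl--Schacht $0$-statement argument for the union hypergraph $\bigcup_\fp\mH_\fp$, with $m_1(A_\fp)>m_1(A)$ guaranteeing that the lower-order pieces are harmless -- is the right way to actually produce a single colouring, and it is genuinely needed rather than an optional refinement. However, you leave this step as a sketch, deferring to the robustness of an argument you do not open up. So your concern is correct and well identified, but as written neither your proposal nor the paper's Equation~\eqref{eq:splittinguppartition} supplies the uniform-colouring step; a complete proof must carry out the union-hypergraph version of the $0$-statement or give an alternative way to pass from ``per-$\fp$ colourings exist'' to ``one colouring works for all $\fp$.''
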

\begin{theorem} \label{thm:sparsedensitynontrivial}
	For every $\epsilon > 0$, $r,m \in \NN$ such that $m \geq 3$ and irredundant, positive and abundant matrix $A \in \mM_{r \times m}(\ZZ)$ there exists a constant $c = c(A,\epsilon)$ such that 
	\begin{equation*}
		\lim_{n \to \infty} \PP{[n]_{p} \to^{\star}_{\epsilon} A} = 0 \quad \text{if } p(n) \leq c \, n^{-1/m_1(A)}.
	\end{equation*}
\end{theorem}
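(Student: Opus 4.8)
This is the $0$-statement for non-trivial solutions in the density setting, so the natural approach is a first-moment/union-bound argument over "witnesses to non-membership," but with the twist that we must count solutions in $\mS_1(A)$ rather than just $\mS_0(A)$. The key observation is that every non-trivial solution $\bx \in \mS_1(A)$, via its partition $\fp = \fp[\bx]$, corresponds to a proper solution of the reduced matrix $A_{\fp}$, and by definition of non-triviality $\rank{A_{\fp}} = \rank{A}$. The partition $\fp$ has some number of parts $s = s(\fp)$ with $2 \leq s \leq m$ (the case $s = m$ is just $\mS_0(A)$), and the image of such a solution in $[n]$ is a set of at most $s$ integers. So I would first show that it suffices to handle, for each fixed partition $\fp$ of $[m]$ with $\rank{A_{\fp}} = \rank{A}$, the $0$-statement for proper solutions of the matrix $A_{\fp}$, and then observe that there are only finitely many such $\fp$ (depending on $A$), so a union bound over them loses only a constant.

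**Reduction to the proper case and the density bound.** Once restricted to a fixed $\fp$ with $s$ parts, I claim $m_1(A_{\fp}) \leq m_1(A)$; this is the crucial technical point. Indeed, for $Q' \subseteq [s]$ let $Q = \bigcup_{i \in Q'} T_i \subseteq [m]$ be the corresponding union of blocks; one checks that $r_{Q'}(A_{\fp}) = r_Q(A)$ (deleting blocks from $A_{\fp}$ is the same as deleting the corresponding columns from $A$ and then merging, and merging columns does not change the rank drop since the relevant column space is unchanged), while $|Q'| \leq |Q|$. A short monotonicity computation on $\frac{x-1}{x-r-1}$ as a function of $x$ (it is decreasing in $x$ for fixed $r$ when $x - r - 1 > 0$) then gives $\frac{|Q'|-1}{|Q'|-r_{Q'}-1} \geq \frac{|Q|-1}{|Q|-r_Q-1}$ — wait, the inequality goes the wrong way, so one must instead argue directly that $A_{\fp}$ (being, as we will check, itself irredundant, positive and abundant, as these are inherited, at least for the abundancy/density-regular-core part — one needs $s \geq 3$ or a separate small-$s$ argument) has $m_1(A_{\fp})$ finite and the relevant maximising $Q'$ pulls back to a $Q$ certifying $m_1(A) \geq m_1(A_{\fp})$, using that the density exponent is governed by the $Q$ achieving the max. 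I would double-check this by reducing to: the expected number of proper solutions of $A_{\fp}$ inside $[n]_p$ that lie in a "core" set $Q'$ is $\Theta(n^{|Q'|} p^{|Q'|})$-ish up to the rank, and when $p \leq c\, n^{-1/m_1(A)} \leq c\, n^{-1/m_1(A_{\fp})}$ this tends to $0$; hence a.a.s.\ there is a subset $S \subseteq [n]_p$ of density $\geq \epsilon$ (in fact of density $1 - o(1)$) with no proper solution of $A_{\fp}$, obtained by deleting one element from each such solution. Taking the intersection over the finitely many $\fp$, we a.a.s.\ obtain a single $S \subseteq [n]_p$ with $|S|/|[n]_p| \geq \epsilon$ meeting no $\mS_1(A)$-solution, which is exactly $[n]_p \not\to^\star_\epsilon A$.

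**The actual obstacle.** The genuinely delicate step is verifying $m_1(A_{\fp}) \le m_1(A)$ (equivalently, that $n^{-1/m_1(A)}$ is small enough to kill $A_{\fp}$-solutions too) and ensuring the first-moment computation is valid for $A_{\fp}$ — in particular that $|Q'| - r_{Q'}(A_{\fp}) - 1 > 0$ so that the density is well-defined, which is why the hypothesis is phrased as irredundant/positive/abundant rather than as in Theorem~\ref{thm:sparsedensity}. I would handle this by first proving, as a lemma, that if $A$ is irredundant, positive and abundant and $\rank{A_{\fp}} = \rank{A}$, then $A_{\fp}$ is again irredundant, positive, and either abundant or has $\leq 2$ essential columns (a case one disposes of by hand, since a matrix with a $1$- or $2$-column proper solution structure forces the density bound trivially), and then invoke the same expectation estimate that underlies the $0$-statement of Theorem~\ref{thm:sparsedensity} (whose proof I would factor out so it can be reused here). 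The rest — the union bound over the $O_A(1)$ partitions, the passage from "few solutions" to "a dense solution-free subset", and the conclusion — is routine.
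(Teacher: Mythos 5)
Your overall structure matches the paper's proof: apply Lemma~\ref{lemma:nontrivial} to reduce non-trivial $A$-solutions to proper $A_{\fp}$-solutions, union-bound over the finitely many $\fp \in \fP(A)$, handle the case where $A_{\fp}$ fails to be abundant separately (the paper's Proposition~\ref{prop:trivialcases0statement}), and for the remaining cases invoke the proper-solution $0$-statement after a density comparison. That is exactly the paper's route.

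However, there is a genuine confusion at the heart of your argument, precisely at the ``crucial technical point.'' You claim $m_1(A_{\fp}) \leq m_1(A)$, but this is the \emph{wrong} direction; what is needed and provable is $m_1(A_{\fp}) > m_1(A)$ (for $|\fp| < m$). Tracing through the logic: to deduce $\PP{[n]_p \to_\epsilon A_{\fp}} \to 0$ from $p \leq c\,n^{-1/m_1(A)}$, you must have $n^{-1/m_1(A)} = o\big(n^{-1/m_1(A_{\fp})}\big)$, which requires $m_1(A_{\fp}) > m_1(A)$. Your own monotonicity computation on $\tfrac{x-1}{x-r-1}$ in fact yields $\tfrac{|Q'|-1}{|Q'|-r_{Q'}-1} \geq \tfrac{|Q|-1}{|Q|-r_Q-1}$, which points the \emph{correct} way; your aside that ``the inequality goes the wrong way'' is a second layer of confusion, since the inequality is fine --- only your initial claim was reversed. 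Indeed you then use $n^{-1/m_1(A)} \leq c\,n^{-1/m_1(A_{\fp})}$ in the expectation estimate, which tacitly assumes $m_1(A_{\fp}) \geq m_1(A)$, contradicting your stated claim. The second gap is that without additional normalization your argument only shows $m_1(A_{\fp}) \geq \max_{Q}\tfrac{|Q|-1}{|Q|-r_Q-1}$ over those $Q$ that are unions of $\fp$-blocks, which need not be $m_1(A)$. The paper resolves both issues at once with the subsystem reduction: by Lemma~\ref{lemma:nosolutions} one may replace $A$ by $A[Q_2]$ for a $Q_2$ achieving the maximum $1$-density, so that WLOG $m_1(A) = \tfrac{m-1}{m-\rank{A}-1}$ is witnessed by $Q = [m]$ (which is a union of blocks). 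Then, since $|\fp| < m$ and $\rank{A_{\fp}} = \rank{A}$ for $\fp \in \fP(A)$, monotonicity of $x \mapsto \tfrac{x-1}{x-\rank{A}-1}$ gives the \emph{strict} inequality $m_1(A_{\fp}) \geq \tfrac{|\fp|-1}{|\fp|-\rank{A}-1} > \tfrac{m-1}{m-\rank{A}-1} = m_1(A)$, exactly what you need. Make the WLOG explicit, state the inequality in the correct direction, and the rest of your outline goes through.
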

Observe that these results are stronger than the $0$-statements of Theorem~\ref{thm:sparsepartition} and Theorem~\ref{thm:sparsedensity} and that their respective $1$-statements supply matching counter-statements to Theorem~\ref{thm:sparsepartitionnontrivial} and Theorem~\ref{thm:sparsedensitynontrivial}. We will therefore only require proofs of the $1$-statements of Theorem~\ref{thm:sparsepartition} and Theorem~\ref{thm:sparsedensity} as well as the $0$-statements in Theorem~\ref{thm:sparsepartitionnontrivial} and Theorem~\ref{thm:sparsedensitynontrivial}.

\medskip

\subsection*{Outline.} In the remainder of this note we will first state some preliminaries in \emph{Section~\ref{sec:preliminaries}} about linear systems of equations, their subsystems and supersaturation results as well as introduce hypergraph containers. We will then proceed by providing short proofs based on hypergraph container results of the $1$-statements in Theorem~\ref{thm:sparsedensity} and Theorem~\ref{thm:sparsepartition} in \emph{Sections~\ref{sec:sparsedensity1statement}} and\emph{~\ref{sec:sparsepartition1statement}} respectively. The $0$-statements of Theorem~\ref{thm:sparsepartitionnontrivial} and Theorem~\ref{thm:sparsedensitynontrivial} will be proven in \emph{Sections~\ref{sec:sparsepartition0statement}} and\emph{~\ref{sec:sparsedensity0statement}} respectively.

\vspace{0.7cm}

\noindent \textbf{Acknowledgements.} I would like to thank Lluís Vena for his tremendous help and input regarding the removal lemma and the proof of the supersaturation results. I would also like to thank Juanjo Rué for his general assistance and supervision.

\section{Preliminaries} \label{sec:preliminaries}

Given some matrix $A \in \mM_{r \times m}(\ZZ)$, we have previously defined $\mathrm{ex}(n,A)$ to be the size of the largest subset of $[n]$ not containing a proper solution and $\pi (A) = \lim_{n \to \infty} \mathrm{ex}(n,A)/n$. Erd\H{o}s and Turan~\cite{ET41} determined that the size of the largest Sidon set, that is $A = (\begin{array}{cccc} 1 & 1 & -1 & -1 \end{array})$, satisfies $\mathrm{ex}(n,A) = \Theta(\sqrt{n})$. For sum-free subsets, that is $A = (\begin{array}{ccc} 1 & 1 & -1 \end{array})$, it is also easy to see that $\pi(A) = 1/2$. Hancock and Treglown~\cite{HT16} very recently extended this to matrices of the form $A = (\begin{array}{ccc} p & q & -r \end{array})$ where $p,q,r \in \NN$ such that $p \geq q \geq r$. Unfortunately, unlike the Erd\H{o}s-Stone-Simonovits Theorem~\cite{ES46,ES65} in the graph case, no exact characterization of $\pi(A)$ is known for arbitrary matrices $A$. However, the following lemma shows that one can still easily bound this value away from $1$ for every irredundant and positive matrix.

\begin{lemma}[Folklore] \label{lemma:pibound}
	Every irredundant and positive matrix $A \in \mM_{r \times m}(\ZZ)$ satisfies $\pi (A) < 1$.
\end{lemma}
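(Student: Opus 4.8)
The plan is to show that a random subset of $[n]$ of density close to $1$ must contain a proper solution, which immediately forces $\mathrm{ex}(n,A)/n$ to be bounded away from $1$. The key observation is that, since $A$ is positive, there is a solution $\bv = (v_1,\dots,v_m) \in \mS(A) \cap \NN^m$ with all entries positive integers; and since $A$ is irredundant, there is also a proper solution $\bw \in \mS_0(A)$. The idea is to combine these: for any scalars $\lambda, \mu$, the vector $\lambda \bv + \mu \bw$ lies in $\mS(A)$, and by choosing $\mu$ small relative to $\lambda$ (but nonzero) we obtain a solution that is both entrywise positive \emph{and} proper, i.e. an element of $\mS_0(A) \cap \NN^m$. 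Call such a solution $\bx = (x_1,\dots,x_m)$ with $1 \le x_1 < x_2 < \dots$ after reordering (strict since proper); actually we only need that the $x_i$ are positive integers, not necessarily distinct as displayed, but they are pairwise distinct because $\bx$ is proper.

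Next I would count translates. Fix such a proper positive solution $\bx$ and let $M = \max_i x_i$. For each $t$ with $1 \le t \le n/M$ (roughly), the translate-and-scale $t \cdot \bx = (t x_1, \dots, t x_m)$ has all entries in $[n]$ and is again a proper solution (scaling by a positive integer preserves being in $\mS(A)$ and preserves distinctness of entries). This gives a family of at least $\lfloor n/M \rfloor = \Omega(n)$ proper solutions inside $[n]$, and each uses at most $m$ elements. A subset $S \subseteq [n]$ that avoids all proper solutions must in particular avoid each translate $t\cdot \bx$, hence must omit at least one element from each. The translates $\{t x_1, \dots, t x_m\}$ for distinct $t$ need not be disjoint, so I would instead pass to a sub-family that is: for instance restrict to $t$ ranging over multiples of $M!$ up to $n$, or more simply observe that each integer of $[n]$ lies in at most $O(1)$ of the sets $\{t\bx : t \ge 1\}$ — indeed $tx_i = t'x_j$ forces $t/t' = x_j/x_i$, bounding the overlaps. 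Either way one extracts $\Omega(n)$ pairwise-disjoint proper solutions in $[n]$, so an avoiding set misses $\Omega(n)$ elements, giving $\mathrm{ex}(n,A) \le n - \Omega(n)$ and hence $\pi(A) < 1$.

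The main obstacle I anticipate is the first step: producing a single solution that is simultaneously positive and proper. Positivity and properness are both ``open/generic'' conditions in the solution space $\mS(A)$, but one must be careful that the combination $\lambda\bv + \mu\bw$ actually works — specifically that for a suitable choice of rational (hence, after clearing denominators, integer) $\lambda, \mu$ the perturbation $\mu\bw$ is small enough to keep every coordinate positive while $\mu \ne 0$ keeps the coordinates distinct. This is a short argument (pick $\lambda$ a large positive integer, $\mu = 1$, then $\lambda v_i + w_i > 0$ for $\lambda > \max_i(-w_i)/v_i$, and distinctness holds because $w$ already has distinct entries and adding $\lambda v_i$ with... — one needs $\lambda(v_i - v_j) \ne w_j - w_i$, which fails for only finitely many $\lambda$), but it is the crux. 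Everything after that is the elementary disjoint-translates counting sketched above; I would present it in a couple of lines, noting the bound is far from tight but suffices for the lemma.
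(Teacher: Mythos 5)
Your counting argument is essentially the paper's: fix a proper positive solution $\bx$, scale it by $t = 1, \dots, J := \lfloor n / \max_i x_i \rfloor$, note that each integer in $[n]$ appears in at most $m$ of these scaled solutions (since $z = t x_k$ determines $t$ once the index $k$ is chosen), and deduce that a set avoiding $\mS_0(A)$ must miss at least $J/m$ elements, giving $\pi(A) \le 1 - 1/(m \max_i x_i) < 1$.

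The one genuine difference --- and a welcome one --- is that you explicitly address \emph{why} $\mS_0(A) \cap \NN^m$ is nonempty, via the perturbation $\lambda\bv + \bw$ with $\bv$ positive and $\bw$ proper, whereas the paper simply opens with ``Let $\bx \in \mS_0(A)\cap\NN^m$'' and leaves that step implicit. Your perturbation argument is correct: $\lambda \bv + \bw \in \mS(A)$ by homogeneity, it is entrywise positive once $\lambda > \max_i |w_i|$, and distinctness of coordinates fails only when $\lambda(v_i - v_j) = w_j - w_i$, which (using that $\bw$ is proper when $v_i = v_j$) excludes only finitely many $\lambda$. So you have filled in a detail the paper takes for granted. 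One small caveat: the aside about restricting $t$ to multiples of $M!$ does not actually make the scaled copies pairwise disjoint (e.g.\ with $\bx = (1,2,3)$, both $6\cdot\bx$ and $12\cdot\bx$ contain $12$), but your alternative --- that each integer lies in a bounded number of copies --- is correct and is precisely what the paper uses, so the proof stands.
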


\begin{proof}
Let $\bx = (x_1 , \dots , x_m) \in \mS_0(A) \cap \NN^m$. Clearly we also have $j \cdot \bx = (j x_1 , \dots , j x_m)  \in \mS_0(A) \cap \NN^m$ for any $j \geq 1$. Now for $n \geq m \max_i(x_i)$ we observe that every $i \in [n]$ can appear in at most $m$ of the $J = \floor{n/ \max_i(x_i)}$ solutions $\bx , 2 \cdot \bx , \dots , J \cdot \bx \in [n]^m$, so every subset of $[n]$ that avoids $\mS_0(A)$ is missing at least $J/m$ elements. It follows that $\pi (A) \leq (n - J/m)/n \leq 1-1/(m \max_i(x_i)) < 1$.	
\end{proof}

Partition and density regular matrices are irredundant and positive by definition. The next statement shows that Theorem~\ref{thm:sparsedensity} does indeed extend already existing results by proving that they are also abundant.

\begin{lemma} \label{lemma:partitionabundant}
If a given $A \in \mM_{r \times m}(\ZZ)$ is partition or density regular, then it is abundant.	
\end{lemma}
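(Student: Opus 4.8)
The plan is to reduce to the partition-regular case — every density-regular matrix is partition regular, and every partition-regular matrix is irredundant by definition — and then to pit Rado's column condition against irredundancy. So assume $A$ is partition regular. By Rado's theorem \cite{Ra33}, $A$ satisfies the column condition: there is a partition $[m] = I_1 \,\dot{\cup}\, \cdots \,\dot{\cup}\, I_k$ of the column indices with $\sum_{l\in I_1}\bc_l = \mathbf 0$ and $\sum_{l\in I_t}\bc_l \in \langle\, \bc_l : l\in I_1\cup\cdots\cup I_{t-1} \,\rangle$ for $2\le t\le k$. From this I would extract two facts. First, each column lies in the span of the remaining ones, so deleting a single column never changes the rank of $A$. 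Second, for each $t$ there is a rational solution of $A\bx^T = \mathbf 0^T$ — which after clearing denominators may be taken in $\mS(A)$ — that takes one and the same nonzero value on every index of $I_t$ and the value $0$ on every index of $I_{t+1}\cup\cdots\cup I_k$: for $t=1$ use (a multiple of) the indicator vector of $I_1$, and for $t\ge2$ use $+1$ on $I_t$ together with suitable coefficients on the earlier blocks; call this solution $\bz^{(t)}$.

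Next I would examine a putative failure of abundance: indices $i\ne j$ such that the $r\times(m-2)$ matrix $A'$ obtained from $A$ by deleting columns $i$ and $j$ has $\rank{A'} < \rank{A}$. Since deleting one column lowers the rank by at most one, and by the first fact deleting $j$ alone does not lower it, we get $\rank{A'} = \rank{A} - 1$. Put $V = \langle\, \bc_l : l\notin\{i,j\} \,\rangle$; then the quotient of the column space of $A$ by $V$ is one-dimensional, and neither $\bc_i$ nor $\bc_j$ lies in $V$ (else removing a single column would already drop the rank), so the images of $\bc_i$ and $\bc_j$ in this quotient are nonzero and therefore proportional: the image of $\bc_j$ equals $\alpha$ times that of $\bc_i$ for some $\alpha\ne0$. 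Projecting the relation $\sum_l z_l\bc_l = \mathbf 0$, which holds for every solution $\bz$, and using $\bc_l\in V$ for $l\notin\{i,j\}$, all terms but two vanish and the common image of $\bc_i$ cancels, leaving $z_i + \alpha z_j = 0$; writing $\gamma := -\alpha\ne0$,
\[
  z_i = \gamma\, z_j \qquad\text{for every solution } \bz\in\mS(A).
\]
Irredundancy now forces $\gamma\ne1$, since a proper solution $\bx\in\mS_0(A)$ has $x_i\ne x_j$ while $x_i = \gamma x_j$.

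It remains to show, by induction on $t$, that $i,j\notin I_1\cup\cdots\cup I_t$ for all $t$; this contradicts $I_1\cup\cdots\cup I_k=[m]$ and completes the proof. Given $i,j\notin I_1\cup\cdots\cup I_{t-1}$, plug $\bz^{(t)}$ into the displayed identity: for $l\in\{i,j\}$ the entry $z^{(t)}_l$ equals a fixed nonzero constant $c$ if $l\in I_t$ and equals $0$ if $l$ lies in a later block, so the three cases ``only $i$ in $I_t$'', ``only $j$ in $I_t$'', ``both in $I_t$'' give respectively $c=0$, $0=\gamma c$, $c=\gamma c$ — each impossible since $c\ne0$ and $\gamma\notin\{0,1\}$. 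Hence $i,j\notin I_t$. (For density-regular $A$ this induction is unnecessary: $\mathbf 1\in\mS(A)$ plugged into the identity gives $1=\gamma$ at once.) The step I expect to be the crux is the middle one: recognising that non-abundance says exactly that the column space of $A$ collapses onto a line modulo the other $m-2$ columns, converting this into the rigid ratio $z_i=\gamma z_j$ on the whole solution space, and then seeing that irredundancy is precisely what rules out the residual case $\gamma=1$. Once that identity is available the column-condition induction is routine.
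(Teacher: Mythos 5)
Your argument is correct and is, at heart, the paper's argument viewed from the dual side. The paper row-reduces $A$ to expose a single row with exactly two nonzero entries $a,b$, invokes irredundancy to get $a+b\neq 0$, and then contradicts the column condition by tracking the last coordinate of the block sums $\bb_i$: the first block must avoid both special columns, and the first block that does contain one of them has a nonzero last entry while all earlier columns have a zero there. You instead pass to the one-dimensional quotient of the column space by the span of the other $m-2$ columns, extract the rigid relation $z_i=\gamma z_j$ valid on the entire kernel with $\gamma\neq 0$, rule out $\gamma=1$ via irredundancy, and contradict the column condition by evaluating the kernel vectors $\bz^{(t)}$ it supplies at positions $i,j$. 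The two presentations are dual descriptions of the same obstruction (column sums $\bb_i$ on the matrix side versus kernel vectors $\bz^{(t)}$ on the solution side), but yours makes explicit a point the paper compresses into the assertion that both entries $a,b$ of the exposed row are nonzero: namely that for a partition-regular $A$, deleting a \emph{single} column never lowers the rank, which pins the rank drop on deleting two columns at exactly one. You also note the instant shortcut $\mathbf 1\in\mS(A)$ for the density-regular case, which the paper does not.
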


\begin{proof}
Rado characterized partition regular matrices as those that satisfy the column condition, that is it is possible to re-order the column vectors $\bc_1, \dots, \bc_m$ of $A$, so that for some choice of indices $0 = m_0 < m_1 < \dots < m_t = m$ setting $\bb_i = \sum_{j=m_{i-1}+1}^{m_i} \bc_j$ for $i = 1, \dots, t$ gives $\bb_1 = \bold{0}$ and $\bb_i$ can be expressed as a as a rational linear combination of $\bc_1, \dots, \bc_{m_{i-1}}$ for all $i \in \{  2, \dots, t \}$.

Assume now that $A$ is non-abundant, that is there exists a submatrix obtained by omitting two columns that has rank strictly smaller than $\rank{A}$. It follows that through basic row operations $A$ can be transformed into a matrix of full rank whose last row contains only two non-zero entries $a, b \in \ZZ \backslash \{ 0 \}$. As the matrix is partition regular, it is also irredundant and hence $a \neq -b$, that is $a + b \neq 0$. It follows that in order to satisfy the first requirement of the column condition, the columns need to be arranged such that there is a $0$ in the last entry of the first column. However, there now must exist some $2 \leq i \leq t$ such that the last entry in $\bb_i$ is non-zero while the last entries in $\bb_1, \dots, \bb_{i-1}$ are zero, violating the second requirement of the column condition. It follows that $A$ must have been abundant.
\end{proof}

Let us consider some examples to illustrate these categories. $A = (\begin{array}{ccc} 1 & 1 & -2 \end{array})$, that is the matrix associated with $3$-term arithmetic progression, is density regular by Roth's Theorem~\cite{Ro53}. It therefore is trivially also partition regular, which was previously established by van der Waerden~\cite{vdW27}. The matrix associated with $k$-term arithmetic progressions
\begin{equation}
	A = \left( \begin{array}{cccccc} 1 & -2 & 1 & & & \\ & 1 & -2 & 1 && \\ &&& \ddots && \\ &&& 1 & -2 & 1\end{array} \right) \in \mM_{(k-2) \times k}
\end{equation}
is density regular and therefore abundant by Szémeredi's Theorem~\cite{Sz75}. $A = (\begin{array}{ccc} 1 & 1 & -1 \end{array})$ is not density regular, but by Schur's Theorem it is still partition regular. Lastly, $A = (\begin{array}{ccc} 1 & 1 & -r \end{array})$ for $r \in \NN \backslash \{1,2\}$ is neither partition nor density regular but it is abundant. For some more examples see~\cite{RSZ15}.

\subsection{Counting Solutions} \label{subsec:countingsolutions}

We extend the notation from the introduction to inhomogeneous systems of linear equations. Given some matrix $A \in \mM_{r \times m}(\ZZ)$ and column vector $\bb \in \ZZ^r$ we write $\mS(A,\bb) = \{ \bx \in \ZZ^m : A \cdot \bx^T = \bb^T \}$, $\mS_0(A,\bb) = \{ \bx = (x_1, \dots , x_m) \in \mS(A,\bb) : x_i \neq x_j \text{ for } i \neq j \}$ and $\mS_1(A,\bb) = \{ \bx \in \mS(A,\bb) : \rank{A_{\fp[\bx]}} = \rank{A} \}$, so that $\mS(A) = \mS(A,\bold{0})$, $\mS_0(A) = \mS_0(A,\bold{0})$ and $\mS_1(A) = \mS_1(A,\bold{0})$. We remark that by elementary properties of systems of linear equations, we have the trivial upper bound 
	\begin{equation} \label{eq:trivialupperbound}
		\big| \, \mS_0(A,\bb) \cap [n]^m \big| \leq \big| \, \mS_1(A,\bb) \cap [n]^m \big| \leq \big| \, \mS(A,\bb) \cap [n]^m \big| \leq n^{m - \rank{A}}.
	\end{equation}
	The next lemma is due to Janson and Ruci\'nski~\cite{JR11} and establishes a lower bound that matches this up to a constant for homogeneous systems. It could be trivially extended to include non-homogeneous systems.

\begin{lemma}[Janson and Ruci\'nski~\cite{JR11}] \label{lemma:countingsolutions}
	Let $r,m \in \NN$ and a matrix $A \in \mM_{r \times m}(\ZZ)$ be given. If $\mS_0(A) \cap \NN^m$ is non-empty then there exists a constant $c_0 = c_0(A) > 0$ such that 
	\begin{equation}
		| \, \mS(A) \cap [n]^m| \geq | \, \mS_1(A) \cap [n]^m| \geq | \, \mS_0(A) \cap [n]^m| \geq c_0 \, n^{m-\rank{A}}.
	\end{equation}
\end{lemma}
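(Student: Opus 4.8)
The first two inequalities are immediate from the inclusions $\mS(A) \supseteq \mS_1(A) \supseteq \mS_0(A)$, so the only content is the lower bound $|\mS_0(A) \cap [n]^m| \geq c_0\,n^{m-\rank{A}}$. Write $d = \rank{A}$ and fix a proper positive solution $\bx_0 = ((\bx_0)_1, \dots, (\bx_0)_m) \in \mS_0(A) \cap \NN^m$. The real solution space $V = \{\bx \in \RR^m : A\bx^T = \mathbf{0}^T\}$ has dimension $m - d$, and since $\bx_0 \in V$ is nonzero we have $m - d \geq 1$. The plan is to produce $\Omega(n^{m-d})$ proper solutions inside $[n]^m$ as small lattice perturbations of a large multiple of $\bx_0$.

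First I would fix linearly independent integer vectors $\bw^{(1)}, \dots, \bw^{(m-d)} \in \mS(A) \cap \ZZ^m$, which exist by clearing denominators in any basis of $V$; being $m-d$ independent vectors in $V$ they form a basis of $V$, so $\bx_0 = \sum_i \lambda_i \bw^{(i)}$ for some $\lambda_i \in \RR$. Next, setting $M = \max_k (\bx_0)_k$, $\mu = \min_k (\bx_0)_k \geq 1$ and $L = \max_{i,k} |\bw^{(i)}_k|$, I would choose constants $\delta, \varepsilon > 0$ with $\delta M < 1$ and $\varepsilon$ sufficiently small in terms of $\delta, \mu, L, m$. For each $c = (c_1, \dots, c_{m-d}) \in \ZZ^{m-d}$ with $\|c\|_\infty \leq \varepsilon n$, consider the integer solution
\[
	\bx(c) = \floor{\delta n}\,\bx_0 + \sum_{i=1}^{m-d} c_i\,\bw^{(i)}.
\]
Each coordinate of $\bx(c)$ lies between $\floor{\delta n}\mu - (m-d)\varepsilon n L$ and $\floor{\delta n} M + (m-d)\varepsilon n L$, so these choices of parameters force every coordinate into $\{1, \dots, n\}$ for $n$ large, i.e. $\bx(c) \in \mS(A) \cap [n]^m$. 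Since the $\bw^{(i)}$ are linearly independent, $c \mapsto \bx(c)$ is injective, so this family has at least $(2\floor{\varepsilon n}+1)^{m-d} \geq c_1\,n^{m-d}$ distinct members, for a constant $c_1 = c_1(A) > 0$.

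The remaining --- and only genuinely delicate --- step is to discard the non-proper members without killing the main term. A member $\bx(c)$ fails to be proper precisely when $\sum_i c_i(\bw^{(i)}_k - \bw^{(i)}_l) = \floor{\delta n}\big((\bx_0)_l - (\bx_0)_k\big)$ for some $k \neq l$. For each fixed pair $k \neq l$ the left-hand side is a linear functional of $c$, and it cannot vanish identically: otherwise $\bw^{(i)}_k = \bw^{(i)}_l$ for all $i$, whence $(\bx_0)_k = \sum_i \lambda_i \bw^{(i)}_k = \sum_i \lambda_i \bw^{(i)}_l = (\bx_0)_l$, contradicting that $\bx_0$ is proper. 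This is exactly where the hypothesis $\mS_0(A) \cap \NN^m \neq \emptyset$ enters, and it is the crux of the argument: it guarantees that the set of bad $c$ for each pair lies on a proper affine hyperplane of $\RR^{m-d}$ rather than filling all of it. Hence each pair contributes only $O(n^{m-d-1})$ bad values of $c$, and summing over the at most $\binom{m}{2}$ pairs leaves $|\mS_0(A) \cap [n]^m| \geq c_1 n^{m-d} - O(n^{m-d-1}) \geq \tfrac{c_1}{2} n^{m-d}$ for $n$ large, which gives the claim with $c_0 = c_1/2$. (Since the hypothesis forces $m - d \geq 1$, the finitely many small $n$ --- where $[n]$ may be too short to contain any proper solution --- are absorbed by shrinking $c_0$.)
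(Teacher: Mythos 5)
The paper does not prove this lemma --- it is cited from Janson and Ruci\'nski~\cite{JR11} without an in-paper argument --- so there is no proof in the paper to compare against. Your reconstruction is the standard lattice-perturbation argument and its substance is correct: take a proper positive solution $\bx_0$, scale it to $\floor{\delta n}\bx_0$, perturb by integer combinations of an integer basis of the kernel with coefficients of size $O(n)$, and observe that the resulting $\Theta(n^{m-\rank{A}})$ points lie in $[n]^m$ for appropriate $\delta,\varepsilon$. Your treatment of the non-proper members is the crux and you handle it correctly: the coincidence $\bx(c)_k = \bx(c)_l$ is an affine condition on $c$ whose linear part cannot vanish identically, since $\bw^{(i)}_k = \bw^{(i)}_l$ for all $i$ together with $\bx_0 = \sum_i \lambda_i \bw^{(i)}$ would force $(\bx_0)_k = (\bx_0)_l$, contradicting properness; this confines each pair's bad set to a proper affine hyperplane and hence to $O(n^{m-\rank{A}-1})$ lattice points.

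The one genuine flaw is the final parenthetical. You cannot absorb the finitely many small $n$ by shrinking $c_0$: if $[n]$ is too short to contain any proper solution (as happens, e.g., at $n=1$ for any $A$, since proper solutions require distinct entries) then $|\mS_0(A) \cap [n]^m| = 0$ while $c_0\,n^{m-\rank{A}} > 0$ for every $c_0>0$, so no shrinkage helps. The correct conclusion of your argument is that the bound holds for all $n$ sufficiently large, which is how the paper uses the lemma; the claim that $c_0$ can be shrunk to cover all $n \geq 1$ is simply false and should be replaced by an explicit ``for $n$ large enough.''
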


In order to determine the exact asymptotic value of $| \, \mS(A) \cap [n]^m|/n^{m-\rank{A}}$ or $| \, \mS_0(A) \cap [n]^m|/n^{m-\rank{A}}$, one needs to employ Ehrhart's Theory, see for example Rué et al.~\cite{RSZ15}.

Lastly, let $\fP(A) = \{ \fp(\bx) : \bx \in \mS_1(A) \}$ denote the family of all set partitions of the column indices $[m]$ stemming from non-trivial solutions. The following lemma gives us the necessary tool to handle non-trivial solutions with repeated entries.
\begin{lemma} \label{lemma:nontrivial}
	For every $r,m \in \NN$, $A \in \mM_{r \times m}(\ZZ)$, partition $\fp \in \fP(A)$ and set $T \subset \NN$ we have
	\begin{equation}
		\big| \{ \bx \in \mS_1(A) \cap T^m : \fp[\bx] = \fp \} \big| \leq \big| \, \mS_0(A_{\fp}) \cap T^{|\fp|} \big|.
	\end{equation}
\end{lemma}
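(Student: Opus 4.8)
The plan is to exhibit an explicit injection from the set on the left-hand side into $\mS_0(A_{\fp}) \cap T^{|\fp|}$; in fact the map will turn out to be a bijection, but only injectivity is needed. Write $\fp = \{ T_1, \dots, T_s \}$ with $\min(T_1) < \dots < \min(T_s)$, so that $s = |\fp|$ and the $i$-th column of $A_{\fp}$ is $\sum_{j \in T_i} \bc_j$. Given a solution $\bx = (x_1, \dots, x_m) \in \mS_1(A) \cap T^m$ with $\fp[\bx] = \fp$, define $\phi(\bx) = (y_1, \dots, y_s) \in T^s$ by letting $y_i$ be the common value of $x_j$ over all $j \in T_i$. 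This is well defined precisely because $\fp[\bx] = \fp$ forces $x_j$ to be constant on each block of $\fp$.

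Next I would verify two properties of $\phi(\bx)$. First, $\phi(\bx) \in \mS(A_{\fp})$: starting from $\mathbf{0}^T = A \cdot \bx^T = \sum_{j=1}^m x_j \bc_j$ and grouping the sum according to the blocks of $\fp$ gives $\sum_{i=1}^s y_i \sum_{j \in T_i} \bc_j = A_{\fp} \cdot \phi(\bx)^T = \mathbf{0}^T$, so $\phi(\bx)$ lies in the kernel of $A_{\fp}$. Second, $\phi(\bx)$ is a proper solution, i.e. $\phi(\bx) \in \mS_0(A_{\fp})$: if $y_i = y_{i'}$ for some $i \neq i'$, then choosing $j \in T_i$ and $j' \in T_{i'}$ yields $x_j = y_i = y_{i'} = x_{j'}$ with $j, j'$ in distinct blocks of $\fp[\bx] = \fp$, contradicting the definition of $\fp[\bx]$. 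Hence $\phi(\bx) \in \mS_0(A_{\fp}) \cap T^{|\fp|}$.

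Finally, $\phi$ is injective on its domain because $\bx$ can be recovered from $\by = \phi(\bx)$ by the expansion rule $x_j = y_i$ whenever $j \in T_i$; this already gives the claimed inequality. (Applying the same expansion rule to an arbitrary $\by \in \mS_0(A_{\fp}) \cap T^s$ produces an $\bx \in \mS(A) \cap T^m$ with $\fp[\bx] = \fp$, and when $\fp \in \fP(A)$ one has $\rank{A_{\fp[\bx]}} = \rank{A_{\fp}} = \rank{A}$, so $\bx \in \mS_1(A)$; thus $\phi$ is in fact a bijection, though this is more than we need.)

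There is no substantive obstacle here: the lemma is a direct unwinding of the definitions of $A_{\fp}$ and of $\fp[\bx]$. The only points requiring a little care are the bookkeeping that fixes the ordering of blocks by their minima so that $A_{\fp}$ is unambiguous, and the observation that it is the \emph{exactness} of $\fp[\bx] = \fp$ — not merely $\bx$ being constant on each block — that makes the collapsed vector proper. The non-triviality hypotheses ($\bx \in \mS_1(A)$ and $\fp \in \fP(A)$) play no role in this direction and are only inherited from the surrounding setup.
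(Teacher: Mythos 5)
Your proof is correct and uses essentially the same construction as the paper: the paper defines the injection as $\bx \mapsto \bx^Q$ where $Q = \{\min(T_1), \dots, \min(T_s)\}$, which is exactly your map $\phi$ since $\bx$ is constant on each block. You spell out the verification steps (that the image lies in the kernel of $A_{\fp}$, that it is proper, and that the map is injective) in slightly more detail than the paper, and your parenthetical remark that $\phi$ is a bijection is a correct bonus observation not stated in the paper, but the argument is the same.
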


\begin{proof}
Write $\fp = \{ T_1, \dots, T_s\}$ for some $1 \leq s \leq m$ such that $\min(T_1) < \dots < \min(T_s)$. Let $Q = \{ \min(T_1), \dots, \min(T_s) \}$. Now for every $\bx = (x_1, \dots, x_m) \in \mS_1(A) \cap T^m$ such that $\fp[\bx] = \fp$, we would have $\bx^Q = (x_{\min(T_1)}, \dots, x_{\min(T_S)}) \in T^{|\fp|}$ as well as $\bx^Q \in \mS(A_{\fp})$ as can be readily seen by the definition of $A_{\fp}$. Since $\fp = \fp[\bx]$, the vector $\bx^Q$ would furthermore be proper, so that $\bx^Q \in \mS_0(A_{\fp}) \cap T^{|\fp|}$. The map $\{ \bx \in \mS_1(A) : \fp[\bx] = \fp \} \cap T^m \to \mS_0(A_{\fp}) \cap T^{|\fp|}, \: \bx \mapsto \bx^Q$ is clearly injective, proving the desired statement.
\end{proof}

An easy corollary of this result is clearly that if there are no proper solutions to the system $A_{\fp}$ in a set, then there can also not be non-trivial solutions to $A$ whose repetitions are indicated by $\fp$.

\subsection{Subsystems}

The notion of subsystems was originally introduced by Rödl and Ruci\'nski~\cite{RR97} when developing a sparse version of Rado's Partition Theorem. Recall the definitions from the introduction, especially $r_Q = \rank{A} - \rank{A^{\widebar{Q}}}$. Observe that we can without loss of generality assume that $A$ is of full rank for this part, since the solution space is unaffected by this assumption. This will simplify notation significantly.

For a given matrix $A \in \mM_{r \times m}(\ZZ)$ and column indices $\emptyset \subseteq Q \subseteq [m]$, we will now construct through basic row operations a matrix that tries to encapsulate the information contained in $A$ through the columns indexed by $Q$. Denote the rows of $A$ by $\ba_1, \ba_2, \dots, \ba_r$ so that the rows of $A^Q$ and $A^{\widebar{Q}}$ are respectively $\ba_1^Q, \ba_2^Q, \dots, \ba_r^Q$ and $\ba_1^{\widebar{Q}}, \ba_2^{\widebar{Q}}, \dots, \ba_r^{\widebar{Q}}$. Here we allow for empty vectors and matrices. If $\rank{A^{\widebar{Q}}} < \rank{A}$, then we can express exactly $r_Q > 0$ of the $r$ rows of $A^{\widebar{Q}}$ as linear combinations of the rest, that is there are indices $i_1 < \dots < i_{r_Q} \in [m]$ and integers $d_i, d_i^j \in \ZZ$ for $i \in \{ i_1, \dots, i_{r_Q} \}$ and $j \in [m] \backslash \{ i_1, \dots, i_{r_Q} \}$ so that
\begin{equation}
	d_i \, \ba_i^{\widebar{Q}} = \!\!\!\!\!\!\! \sum_{j \in [m] \backslash \{ i_1, \dots, i_{r_Q} \}} \!\!\!\!\!\!\! d_i^j \, \ba_j^{\widebar{Q}} \qquad \text{for } i \in \{ i_1, \dots, i_{r_Q} \}.
\end{equation}
Consider now the following integer-valued matrix with $r_Q$ rows and $|Q|$ columns
\begin{equation}
	\Sub{A}{Q} = \left( \begin{array}{c}
		d_{i_1} \, \ba_{i_1}^{Q} \: - \!\!\!\!\! \sum\limits_{j \in [m] \backslash \{ i_1, \dots, i_{r_Q} \}} \!\!\!\!\! d_{i_1}^j \, \ba_j^Q \\
		\vdots \\
		d_{i_{r_Q}} \ba_{i_{r_Q}}^{Q} \: - \!\!\!\!\! \sum\limits_{j \in [m] \backslash \{ i_1, \dots, i_{r_Q} \}} \!\!\!\!\! d_{i_{r_Q}}^j \, \ba_j^Q
	\end{array} \right) \in \mM_{r_Q \times |Q|}(\ZZ).
\end{equation}
To illustrate this construction further, note that if we assume that the column indices are appropriately ordered, that is $Q = \{1, \dots, |Q| \}$, then the matrix $A$ (without the assumption of being of full rank) can be rewritten as 
	\begin{equation} \label{eq:subsystemillustration}
		B = \left( \! \! \! \begin{array}{c}
				\text{------------} \\
				\begin{array}{cc} \Sub{A}{Q} & \mathbf{0} \textcolor{white}{\big|}\\ \mathbf{0} & \mathbf{0} \textcolor{white}{\big|} \end{array}
				\end{array} \! \! \! \right)
			\begin{array}{l}
				\big]\ \rank{A} - r_Q \textcolor{white}{\big|} \\
				\big]\ r_Q \textcolor{white}{\big|} \\
				\big]\ r - \rank{A} \textcolor{white}{\big|}
			\end{array}
	\end{equation}
	through elementary row operations, that is $A = P_1^{-1} \cdot B$ where $P_1 \in \mM_{r \times r}$ is an invertible rectangular matrix. We have $\mS(B) = \mS(A)$, that is the homogeneous solution space remains unchanged, at least up to the column permutation necessary to ensure that $Q = \{1, \dots, |Q| \}$. 
	
Observe that the matrix $A[Q]$ is only well defined up to our choices of indices $i_k$ and coefficients $d_i, d_i^j$. However, the homogeneous solution space $\mS(\Sub{A}{Q})$ is independent of these, so we pick one representative for each $\emptyset \subseteq Q \subseteq [m]$ and refer to it as the \emph{subsystem of $A$} induced by $Q$. The notation $\Sub{A}{Q}$ will refer to this particular representative. We state the following simple observations, that are immediately clear by considering Equation~\eqref{eq:subsystemillustration}.
	
\begin{remark} \label{rmk:subsystemobservations}
	$\Sub{A}{Q}$ is of full rank, that is $\rank{\Sub{A}{Q}} = r_Q$ for any $Q \subseteq [m]$ satisfying $r_Q > 0$. If $A$ was irredundant, positive or abundant, then $\Sub{A}{Q}$ trivially also fulfils these properties for any $Q \subseteq [m]$ such that $r_Q > 0$.
\end{remark}
	
The following lemma now establishes some results regarding the rank of subsystems of abundant matrices. It also verifies that the maximum $1$-density parameter given in the introduction is indeed well-defined for abundant matrices.

\begin{lemma}[Kusch et al.~\cite{KRSS16}] \label{lemma:welldefined}
	For any $r,m \in \NN$, abundant matrix $A \in \mM_{r \times m}(\ZZ)$ and selection of column indices $Q \subseteq [m]$ the following holds. If $|Q| \geq 2$ then we have $|Q| - r_Q - 1 > 0$, that is the parameter $m_1(A)$ is well-defined. If $|Q| = 1$ then we have $r_Q = 0$.
\end{lemma}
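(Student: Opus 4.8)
The statement has two parts. The easy part is the claim about singleton sets: if $|Q| = 1$, then $r_Q = 0$. I would argue this directly. Abundance says that deleting any two columns from $A$ leaves the rank unchanged. For a single column $\bc_i$, pick any other column $\bc_j$ and delete both; abundance gives $\rank{A^{\widebar{\{i,j\}}}} = \rank{A}$. Since $A^{\widebar{\{i\}}}$ has $A^{\widebar{\{i,j\}}}$ as a submatrix and the same rank bound applies, we get $\rank{A^{\widebar{\{i\}}}} = \rank{A}$, hence $r_Q = \rank{A} - \rank{A^{\widebar{\{i\}}}} = 0$. (One should double-check the degenerate case $m = 2$: if $m = 2$ then deleting two columns leaves the empty matrix, so abundance forces $\rank{A} = 0$, and the claim is vacuous or trivial; but the theorems of interest assume $m \geq 3$, so this is not a real concern.)

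The main part is showing $|Q| - r_Q - 1 > 0$ whenever $|Q| \geq 2$, i.e. $r_Q \leq |Q| - 2$. The key inequality to establish is that abundance passes to a bound on $r_Q$. First I would reduce to the case $Q = [m]$ being handled separately: there $r_Q = \rank{A}$, so I need $\rank{A} \leq m - 2$, which is exactly abundance applied to the full matrix (deleting two columns cannot drop the rank, so the rank is at most $m - 2$; more carefully, $\rank{A} = \rank{A^{\widebar{\{i,j\}}}} \leq m - 2$ since $A^{\widebar{\{i,j\}}}$ has only $m-2$ columns). For general $Q$ with $2 \leq |Q| < m$, the natural approach is to show that the subsystem $\Sub{A}{Q}$ is itself abundant as a matrix with $|Q|$ columns, and then apply the $Q = [m]$ case to $\Sub{A}{Q}$: since $\rank{\Sub{A}{Q}} = r_Q$ by Remark~\ref{rmk:subsystemobservations}, abundance of $\Sub{A}{Q}$ gives $r_Q \leq |Q| - 2$. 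Remark~\ref{rmk:subsystemobservations} already asserts that $\Sub{A}{Q}$ inherits irredundancy, positivity and abundance, so strictly speaking the lemma follows by combining that remark with the full-rank observation and the $Q = [m]$ computation.

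The step I expect to be the real obstacle — and the one a careful write-up must not skip — is verifying that abundance genuinely transfers to subsystems, i.e. justifying the relevant clause of Remark~\ref{rmk:subsystemobservations} rather than merely citing it. Using the block form in Equation~\eqref{eq:subsystemillustration}: $A$ is row-equivalent to a matrix $B$ whose top-left block is $\Sub{A}{Q}$ (occupying the columns indexed by $Q$) and whose remaining rows have zeros in the $Q$-columns. Deleting two columns both lying in $Q$ from $A$ corresponds, after the same row operations, to deleting those two columns from $B$; the rank of the result is the rank of the $Q$-restricted part (which is $\Sub{A}{Q}$ with two columns removed, of rank $r_Q'$) plus the rank of the bottom block (which is $\rank{A} - r_Q$, unaffected). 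Since $A$ is abundant, the total must still be $\rank{A}$, forcing the top part to retain rank $r_Q$ — that is, $\Sub{A}{Q}$ loses no rank when two of its columns are deleted, which is exactly abundance of $\Sub{A}{Q}$. The only subtlety is bookkeeping the column permutation that puts $Q = \{1, \dots, |Q|\}$ and making sure "delete two columns of $A$ indexed by elements of $Q$" lines up correctly with "delete two columns of $\Sub{A}{Q}$" after the row reduction; this is routine linear algebra once the block picture is set up, but it is where all the content sits, so I would write it out explicitly rather than wave at the remark.
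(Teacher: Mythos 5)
The paper does not actually prove this lemma; it is imported from Kusch et al.~\cite{KRSS16}, with the remark immediately after Lemma~\ref{lemma:nosolutions} that proofs of both statements can be found there. So there is no in-paper argument to compare against, and what matters is whether your reconstruction is sound. It is, in outline, but it takes a roundabout route and one step is mis-stated.

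Your $|Q|=1$ and $Q=[m]$ cases are fine. For $2 \le |Q| < m$ you reduce to the $Q=[m]$ case applied to $\Sub{A}{Q}$, which does work, but your description of the block form of Equation~\eqref{eq:subsystemillustration} is not the one the paper uses: there $\Sub{A}{Q}$ occupies the \emph{middle} $r_Q$ rows and has zeros in the $\widebar{Q}$-columns, the top $\rank{A}-r_Q$ rows are \emph{not} zero in the $Q$-columns (and in general cannot be made so by further row operations, since the $Q$-part of a top row need not lie in the row span of $\Sub{A}{Q}$), and only the bottom $r-\rank{A}$ rows vanish entirely. The rank additivity you assert after deleting two columns of $Q$ does hold, but the reason is that the top block restricted to $\widebar{Q}$ has full row rank $\rank{A}-r_Q$, so no nontrivial combination involving a top row can have vanishing $\widebar{Q}$-part; that deserves a sentence rather than being read off the (incorrect) block picture. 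You also silently assume $r_Q>0$ so that $\Sub{A}{Q}$ exists; when $r_Q=0$ the claim $|Q|-r_Q-1>0$ is trivial, but this case should be flagged.

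More to the point, the entire detour through subsystems and Remark~\ref{rmk:subsystemobservations} is avoidable, and I would recommend the shorter argument. For $|Q|\ge 2$ pick any $k,l\in Q$ and observe that $\widebar{\{k,l\}}=(Q\setminus\{k,l\})\cup\widebar{Q}$, so that by subadditivity of rank under taking unions of columns,
\[
\rank{A} \;=\; \rank{A^{\widebar{\{k,l\}}}} \;\le\; \rank{A^{Q\setminus\{k,l\}}} + \rank{A^{\widebar{Q}}} \;\le\; \big(|Q|-2\big) + \rank{A^{\widebar{Q}}},
\]
where the first equality is abundance and the last inequality holds because $A^{Q\setminus\{k,l\}}$ has only $|Q|-2$ columns. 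Rearranging gives $r_Q=\rank{A}-\rank{A^{\widebar{Q}}}\le |Q|-2$, as required, with no case distinction on whether $r_Q>0$. For $|Q|=1$, say $Q=\{i\}$, pick any $j\ne i$ and note $\rank{A}=\rank{A^{\widebar{\{i,j\}}}}\le\rank{A^{\widebar{\{i\}}}}\le\rank{A}$, forcing $r_Q=0$. This is self-contained, elementary, and sidesteps the need to verify that abundance transfers to $\Sub{A}{Q}$ — which, as you rightly sensed, is where all the hidden work in your version lives.
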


%\begin{proof}
%	First note that for any selection of column indices $Q$ we have $|Q| \geq r_Q $ as we have established that $\Sub{A}{Q}$ is of full rank. Next, assume that here exists a selection of column indices $Q$ satisfying $|Q| - r_Q = 1$ or $|Q| - r_Q = 0$, that is the subsystem $\Sub{A}{Q}$ has either one degree of freedom or none. In this case, $\Sub{A}{Q}$ would be equivalent to a matrix whose last line only contains only one or two non-zero entries. The same would hold for $\Sub{A}{\{ 1, \dots, m \}}$ which violates the requirement of abundancy for $A$.
%\end{proof}

The next lemma is crucial and establishes that a lack of non-trivial solutions to a subsystem of $A$ also implies a lack of non-trivial solutions to the full system. A proof of this as well as the previous statement can be found in Kusch et al.~\cite{KRSS16}. Note that this was previously proven by Rödl and Ruciński for proper solutions~\cite{RR97}.

\begin{lemma}[Kusch et al.~\cite{KRSS16}] \label{lemma:nosolutions}
	For any $r,m \in \NN$, matrix $A \in \mM_{r \times m}(\ZZ)$ and set $T \subset \NN$ the following holds. If there exists a selection of column indices $Q \subseteq [m]$ such that $r_Q > 0$ and $\mS_1(\Sub{A}{Q}) \cap T^{|Q|} = \emptyset$ then $\mS_1(A) \cap T^m = \emptyset$.
\end{lemma}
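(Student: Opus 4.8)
The plan is to establish the contrapositive: assuming $\mS_1(A)\cap T^m\neq\emptyset$, I will produce an element of $\mS_1(\Sub{A}{Q})\cap T^{|Q|}$. First I would use the two harmless reductions already available in this subsection. We may assume $A$ has full rank $r$, as noted above (deleting dependent rows leaves $\mS(A)$ and $\rank{A}$ unchanged, and since left multiplication commutes with the column-merging operation $A\mapsto A_\fp$ and is rank-preserving on the column space of $A$, it also leaves $\mS_1(A)$ unchanged), and after permuting columns we may assume $Q=\{1,\dots,|Q|\}$, so that restricting a tuple $\bx=(x_1,\dots,x_m)$ to its first $|Q|$ coordinates is precisely the map $\bx\mapsto\bx^Q$ of Lemma~\ref{lemma:nontrivial}. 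Fix $\bx=(x_1,\dots,x_m)\in\mS_1(A)\cap T^m$; the goal is then to show $\bx^Q=(x_1,\dots,x_{|Q|})\in\mS_1(\Sub{A}{Q})\cap T^{|Q|}$, contradicting the hypothesis.

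Of the three conditions this needs, $\bx^Q\in T^{|Q|}$ is immediate, and $\bx^Q\in\mS(\Sub{A}{Q})$ follows from the block picture of Equation~\eqref{eq:subsystemillustration}: since $\mS(B)=\mS(A)$ and the middle $r_Q$ rows of $B$ form the matrix $(\,\Sub{A}{Q}\mid\mathbf{0}\,)$, the identity $B\cdot\bx^T=\mathbf{0}^T$ forces $\Sub{A}{Q}\cdot(\bx^Q)^T=\mathbf{0}^T$. The substance lies in showing that $\bx^Q$ is \emph{non-trivial} for $\Sub{A}{Q}$, i.e. $\rank{(\Sub{A}{Q})_{\fp[\bx^Q]}}=\rank{\Sub{A}{Q}}=r_Q$. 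For this I would extract from the construction of $\Sub{A}{Q}$ the integer matrix $R$ with $r_Q$ rows given by the coefficient vectors used there, so that $RA^{Q}=\Sub{A}{Q}$ and $RA^{\widebar{Q}}=\mathbf{0}$; moreover $\rank{R}=r_Q$, since $RA^{Q}=\Sub{A}{Q}$ has full row rank. The key consequence is that $R\bc_i=\mathbf{0}$ for every $i\in\widebar{Q}$, where $\bc_i$ is the $i$-th column of $A$, while $R\bc_i$ is the $i$-th column of $\Sub{A}{Q}$ for $i\in Q$.

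Now write $\fp=\fp[\bx]$ with parts $T_1,\dots,T_s$, and observe that $\fp[\bx^Q]$ is simply the restriction of $\fp$ to $\{1,\dots,|Q|\}=Q$. The column of $A_\fp$ associated with a part $T_k$ is $\mathbf{d}_k=\sum_{i\in T_k}\bc_i$, and since $R$ annihilates $\bc_i$ for $i\in\widebar{Q}$ we get $R\mathbf{d}_k=\sum_{i\in T_k\cap Q}R\bc_i$; when $T_k\cap Q\neq\emptyset$ this is exactly the column of $(\Sub{A}{Q})_{\fp[\bx^Q]}$ attached to the part $T_k\cap Q$, and when $T_k\subseteq\widebar{Q}$ it is $\mathbf{0}$. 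Hence the column space of $(\Sub{A}{Q})_{\fp[\bx^Q]}$ equals the image under $R$ of the column space of $A_\fp$. Since $\bx\in\mS_1(A)$ gives $\rank{A_\fp}=\rank{A}=r$, the column space of $A_\fp$ is all of $\RR^r$, so the column space of $(\Sub{A}{Q})_{\fp[\bx^Q]}$ is the full column space of $R$, of dimension $\rank{R}=r_Q$. Therefore $\rank{(\Sub{A}{Q})_{\fp[\bx^Q]}}=r_Q$, that is $\bx^Q\in\mS_1(\Sub{A}{Q})$, giving the contradiction.

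I expect the main obstacle to be exactly this rank identity — making precise the sense in which passing to the subsystem $\Sub{A}{Q}$ commutes with merging columns along a partition, and in particular handling the parts of $\fp[\bx]$ that meet both $Q$ and $\widebar{Q}$. The way through is the single observation that left multiplication by $R$ kills the $\widebar{Q}$-columns of $A$, so the merging operation ``descends'' to $\Sub{A}{Q}$ with no leftover term; granted that, the rank count is forced by $\rank{A_{\fp[\bx]}}=\rank{A}$. The rest (the reductions, the membership $\bx^Q\in\mS(\Sub{A}{Q})$, the identification of $R$) is bookkeeping I would not belabour, and specializing the argument to $\bx$ with distinct entries recovers the proper-solution statement of Rödl and Ruci\'nski~\cite{RR97}.
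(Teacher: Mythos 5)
The paper itself gives no proof of this lemma --- it defers to Kusch et al.~\cite{KRSS16} --- so there is nothing in this source to compare against line by line; your argument is correct and self-contained. The one substantive point is the rank count for $(\Sub{A}{Q})_{\fp[\bx^Q]}$, and you handle it cleanly: taking $R\in\mM_{r_Q\times r}(\ZZ)$ to be the coefficient matrix of the row operations that define $\Sub{A}{Q}$, you get $RA^Q=\Sub{A}{Q}$ and $RA^{\widebar{Q}}=\mathbf{0}$, hence $R\bc_i=\mathbf{0}$ for $i\in\widebar{Q}$. The columns of $(\Sub{A}{Q})_{\fp[\bx^Q]}$ are then exactly the nonzero vectors among the $R$-images of the columns of $A_{\fp[\bx]}$, so $(\Sub{A}{Q})_{\fp[\bx^Q]}$ and $R\cdot A_{\fp[\bx]}$ have the same column space; since $\rank{A_{\fp[\bx]}}=\rank{A}=r$ makes the column space of $A_{\fp[\bx]}$ all of $\RR^r$, this common space is the full column space of $R$, of dimension $\rank{R}=r_Q$ because $RA^Q$ has full row rank $r_Q$. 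That is the delicate point --- the parts of $\fp[\bx]$ straddling $Q$ and $\widebar{Q}$ --- and your observation that $R$ annihilates the $\widebar{Q}$-columns of $A$ resolves it with nothing left over. The two preliminary reductions are harmless for the reason you indicate: an invertible row operation $A\mapsto PA$ satisfies $(PA)_{\fp}=P\cdot A_{\fp}$ for every set partition $\fp$ of $[m]$ and hence preserves $\mS_1$, deleting the resulting zero rows afterwards does the same, and permuting columns is mere relabelling. So the proof stands, and it does indeed specialize to the proper-solution statement of Rödl and Ruci\'nski~\cite{RR97} when $\fp[\bx]$ is discrete.
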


We end our observations about subsystem by stating the following easy proposition. It covers some trivial cases not considered by Theorem~\ref{thm:sparsedensitynontrivial} and will in fact be needed later in the proof of it.

\begin{proposition} \label{prop:trivialcases0statement}
	For every $\epsilon > 0$, $r,m \in \NN$ such that $m \geq 2$ and matrix $A \in \mM_{r \times m}(\ZZ)$ the following holds. If $A$ is irredundant, positive but not abundant, then we have $\lim_{n \to \infty} \PP{[n]_{p} \to^{\star}_{\epsilon} A} = 0$ for any $p(n) = o(1)$.
\end{proposition}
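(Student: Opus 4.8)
The plan is to exploit the non-abundance of $A$ via the subsystem machinery of Section~\ref{subsec:countingsolutions}, reducing the statement to a counting argument about a small subsystem that fails to have enough room for non-trivial solutions. Since $A$ is irredundant and positive but not abundant, by the argument in the proof of Lemma~\ref{lemma:partitionabundant} (or directly from the definition of abundance) there is a selection of two columns whose deletion drops the rank; equivalently there is a column set $Q \subseteq [m]$ with $|Q| = 2$ and $r_Q = \rank{A} - \rank{A^{\widebar{Q}}} > 0$. By Lemma~\ref{lemma:welldefined} applied to the single columns, together with $|Q|=2$, we have $r_Q \le |Q| - 1 = 1$, and since $r_Q > 0$ this forces $r_Q = 1$. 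So the subsystem $\Sub{A}{Q}$ is a single equation in two variables, $\alpha x + \beta y = 0$ with $\alpha,\beta \in \ZZ \setminus \{0\}$ (nonzero since $\rank{\Sub{A}{Q}} = r_Q = 1$ by Remark~\ref{rmk:subsystemobservations}).

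Next I would bound the number of non-trivial solutions of this subsystem inside $[n]_p$. The solution set $\mS(\Sub{A}{Q}) \cap [n]^2$ consists of pairs $(x,y)$ with $\alpha x = -\beta y$; writing $\ell = \mathrm{lcm}(|\alpha|,|\beta|)$, these are parametrized by a single free integer, so $|\mS(\Sub{A}{Q}) \cap [n]^2| \le n$ — indeed it is $O(n/\ell)$. For a fixed $x \in [n]_p$ there is at most one $y$ completing it to a solution, so the expected number of solutions of $\Sub{A}{Q}$ present in $[n]_p$ (allowing repeated entries) is at most $n \cdot p^{2} \cdot C$ for a constant $C$ depending only on $A$ (the factor $p^2$ because a solution uses at most two elements; if $|\alpha| = |\beta|$ the solution $x = y$ uses one element and contributes $\le np$, which is also $o(1)$ since $p = o(1)$ and $p \cdot n$ — wait, $np$ is not $o(1)$). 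Let me restate this: the expected number of solutions using two distinct elements is $O(n p^2)$, which under $p = o(n^{-1/2})$ would go to $0$, but $p = o(1)$ alone does not give that. The correct route, then, is to observe that $\mS_1(\Sub{A}{Q})$ must be controlled: a solution $(x,y)$ with $x = y$ forces $\alpha + \beta = 0$, which by irredundancy of $\Sub{A}{Q}$ (Remark~\ref{rmk:subsystemobservations}, $\mS_0 \neq \emptyset$) is excluded unless that degenerate partition $\fp$ has $\rank{(\Sub{A}{Q})_\fp} < \rank{\Sub{A}{Q}}$ — in which case $(x,y)$ with $x=y$ is trivial and excluded from $\mS_1$. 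Hence $\mS_1(\Sub{A}{Q}) = \mS_0(\Sub{A}{Q})$, every non-trivial solution of the subsystem uses two distinct elements, and $\EE{|\mS_1(\Sub{A}{Q}) \cap [n]_p^2|} = O(n p^2)$.

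So the genuine obstacle — and the reason $\epsilon$ enters rather than a sharp threshold — is that $np^2 \to 0$ does \emph{not} follow from $p = o(1)$. The fix is to \emph{not} ask for the solutions to disappear entirely, but only to be sparse relative to $\epsilon n p$, i.e.\ relative to the typical size of $[n]_p$. Concretely: by Chernoff, $|[n]_p| \ge \tfrac12 np$ w.h.p.; and $\EE{|\mS_1(\Sub{A}{Q}) \cap [n]_p^2|} = O(np^2) = o(np)$ since $p = o(1)$, so by Markov the number of elements of $[n]_p$ that lie in some non-trivial solution of $\Sub{A}{Q}$ is $o(np)$ w.h.p. Deleting those $o(np)$ elements leaves a set $S \subseteq [n]_p$ with $|S| \ge (1-o(1))|[n]_p| \ge \epsilon |[n]_p|$ w.h.p.\ (for $n$ large, as $\epsilon$ is fixed) and $S \cap \mS_1(\Sub{A}{Q}) = \emptyset$; by Lemma~\ref{lemma:nosolutions} this gives $S \cap \mS_1(A) = \emptyset$, witnessing $[n]_p \not\to^{\star}_\epsilon A$. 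Therefore $\PP{[n]_p \to^{\star}_\epsilon A} \to 0$, as required. The main subtlety to get right in the write-up is this last paragraph — making sure the deleted set really is $o(np)$ in size (handling the case $|\alpha| = |\beta|$, where the solution $x = -\tfrac{\beta}{\alpha} y = y$ would be a diagonal and needs the irredundancy/non-triviality argument above to be excluded) and that the surviving set still has density $\ge \epsilon$ with high probability.
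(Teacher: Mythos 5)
Your proposal follows essentially the same route as the paper: non-abundance yields a two-column subsystem $\Sub{A}{Q}$, which by irredundancy and positivity is a single equation $ax - by = 0$ with $a,b \in \NN$ and $a \neq b$ (so no diagonal solutions in $[n]^2$); the expected number of its solutions in $[n]_p$ is $\Theta(np^2) = o(np)$; and deleting one element per solution leaves whp a solution-free subset of density at least $\epsilon$, which by Lemma~\ref{lemma:nosolutions} witnesses $[n]_p \not\to^{\star}_\epsilon A$.

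The one genuine gap is that your concluding Chernoff step ``$|[n]_p| \geq \tfrac12 np$ w.h.p.'' silently assumes $np \to \infty$, which $p = o(1)$ alone does not give; the proposition must cover the whole range of $p = o(1)$, including $np = O(1)$. The paper explicitly splits into two cases: when $np = O(1)$ it observes $\EE{\big|\mS(\Sub{A}{Q}) \cap [n]_p^2\big|} = O(np^2) = O(p) = o(1)$ and invokes Markov to conclude that whp $[n]_p$ contains no solutions at all, settling that range immediately; when $np \to \infty$ it runs the Chernoff-plus-deletion argument you give. Adding this case distinction makes your proof complete and essentially identical to the paper's. (Your aside about excluding $(x,x)$ solutions is somewhat circular in its ``unless $\rank{(\Sub{A}{Q})_{\fp}} < \rank{\Sub{A}{Q}}$'' clause, but the conclusion --- that $a \neq b$, forced by irredundancy, rules out diagonal solutions --- is correct and is what the paper uses implicitly.)
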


\begin{proof}
	Since $A$ is not abundant but positive and irredundant, there exists some $Q \subseteq [m]$ satisfying $|Q| = 2$ such that $\Sub{A}{Q} = (\begin{array}{cc} a & -b \end{array})$ for some $a,b \in \NN$, $a \neq b$. By Lemma~\ref{lemma:nosolutions} we can replace $A$ with $\Sub{A}{Q}$. It follows by Equation~\eqref{eq:trivialupperbound}, Lemma~\ref{lemma:countingsolutions} as well as the linearity of expectation that $\EE{|\mS(A) \cap [n]_p^2|} = \Theta(n \, p^2)$ while $\EE{|[n]_p|} = np$. If $np = O(1)$ then $\EE{|\mS(A) \cap [n]_p^2|} = o(1)$ and the result trivally holds by Markov's Inequality. If $np \to \infty$ then by Chernoff $|[n]_p| \geq np/2$ asymptotically almost surely. Since $p = o(1)$ we have $\EE{|\mS(A) \cap [n]_p^2|} = o(np/2)$ and therefore for any given set of positive density, we can remove one element per solution and still asymptotically almost surely have a solution-free set of that same density. This proves the desired result.
\end{proof}

\subsection{Removal Lemma and Supersaturation Results}

A common ingredient to proving sparse results are robust versions of the deterministic statement, referred to as supersaturation results. In the graph setting such a result is folklore and easy to prove. A number theoretical counterpart is Varnavides~\cite{Va59} robust version of Szemerédi's Theorem which states that a set of positive density contains not just one, but a positive proportion of all $k$–term arithmetic progressions. Frankl, Graham and Rödl~\cite{FGR88} formulated such results for both for partition and density regular systems.

\begin{lemma}[Theorem~1 in Frankl, Graham and Rödl~\cite{FGR88}] \label{lemma:partitionsupersaturation}
	For a given partition regular matrix $A \in \mM_{r \times m}(\ZZ)$ and $s \in \NN$ there exists $\zeta  = \zeta(A,s) > 0$ such that for any partition $[n] = T_1 \dot{\cup} \dots \dot{\cup} \, T_s$ and $n$ large enough we have $|\mS_0(A) \cap T_1^m| + \dots + |\mS_0(A) \cap T_s^m| \geq \zeta \, | \, \mS_0(A) \cap [n]^m|$.
\end{lemma}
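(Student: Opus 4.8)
Lemma~\ref{lemma:partitionsupersaturation} is a Varnavides-type~\cite{Va59} strengthening of Rado's finitary statement $[n]\to_s A$, and my plan is to manufacture the many monochromatic proper solutions by exhibiting many structure-preserving copies of a fixed finite configuration inside $[n]$ and then bounding how often a single solution can arise in this way. The inputs are Rado's theorem in the quantitative form ``there is $N_0=N_0(A,s)$ with $[N_0]\to_s A$'', together with the two-sided count $|\mS_0(A)\cap[n]^m|=\Theta(n^{m-\rank{A}})$ supplied by Equation~\eqref{eq:trivialupperbound} and Lemma~\ref{lemma:countingsolutions} (the latter applies since a partition-regular matrix is irredundant and positive).

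First I would run the cheap amplification by dilations. Because $A$ is homogeneous, $j\mapsto dj$ carries a proper solution of $A$ in $[N_0]$ to a proper solution in $[n]$ whenever $d\le\floor{n/N_0}$, and it is worth stressing that this uses homogeneity only, not translation invariance. For each such $d$, pulling the partition $T_1\dot{\cup}\dots\dot{\cup}\,T_s$ back along $j\mapsto dj$ and applying $[N_0]\to_s A$ produces a monochromatic proper solution $\by\in\mS_0(A)\cap[N_0]^m$, hence a monochromatic $d\,\by\in\mS_0(A)\cap[n]^m$. A fixed $\bx$ equals $d\,\by$ for some such $\by$ only if $d$ divides $g:=\gcd(x_1,\dots,x_m)$ and $g/d\le N_0$, which leaves at most $N_0$ choices of $d$; dividing through one obtains at least $c\,n$ distinct monochromatic proper solutions for some $c=c(A,s)>0$. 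When $m-\rank{A}=1$ this already matches $|\mS_0(A)\cap[n]^m|=\Theta(n)$ up to a constant and the lemma follows.

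The obstacle --- and the reason this is genuinely \cite[Theorem~1]{FGR88} and not a one-line corollary --- is the range $\ell:=m-\rank{A}\ge2$, where one must produce $\Omega(n^{\ell})$ monochromatic solutions while the one-parameter family of dilations only ever yields $\Omega(n)$ of them; what is missing is a supply of copies with $\ell$ independent degrees of freedom matching the $\ell$-dimensional solution lattice $\mS(A)$. If $A$ is moreover density (equivalently, translation) regular this is easy: the largest colour class has density at least $1/s$ and, since then $\pi(A)=0$, a removal-lemma argument for linear systems (the Kr\'al', Serra and Vena removal lemma~\cite{KSV12}, applied exactly as in the single-set supersaturation result used elsewhere in this note) shows that already one set of positive density contains $\Omega(n^{\ell})$ proper solutions. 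For a general partition-regular $A$ this reduction fails, since one may have $\pi(A)\ge1/s$ (e.g.\ Schur triples with $s=2$, the odd integers being proper-solution-free); instead I would follow Frankl, Graham and R\"odl and pass to Deuber's $(k,p,c)$-sets. For suitable fixed $k,p,c=k,p,c(A,s)$: every $(k,p,c)$-set contains a proper solution of $A$ (here Rado's column condition enters); every $s$-colouring of $[n]$ leaves a $\Theta_{A,s}(1)$-fraction of all $(k,p,c)$-sets in $[n]$ monochromatic; and the assignment of a proper solution to a monochromatic $(k,p,c)$-set is boundedly-to-one on solutions. Combining these with the $\Theta(n^{k+1})$-order count of $(k,p,c)$-sets in $[n]$ yields $\Omega(n^{\ell})$ monochromatic proper solutions, which is the desired bound.

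The hard part, where I expect essentially all the work to sit, is the middle of those three points: a Varnavides-type count for Deuber sets is itself a supersaturation statement and, together with controlling the overcounting, is the only non-elementary ingredient (the first point is Rado/Deuber, the third is short divisor-style bookkeeping). Put differently, the real difficulty is never finding \emph{one} monochromatic solution but getting the correct power $n^{m-\rank{A}}$, i.e.\ a positive proportion of \emph{all} solutions --- precisely where the elementary dilation argument breaks down and a parameter-richer structured object becomes necessary. As this is exactly \cite[Theorem~1]{FGR88}, I would cite it rather than reproduce the argument.
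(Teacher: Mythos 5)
The paper does not prove this lemma: it imports it verbatim as Theorem~1 of Frankl, Graham and R\"odl~\cite{FGR88}, exactly as you conclude you would do. In that sense your proposal and the paper take the same route, and your surrounding commentary is a faithful high-level reconstruction of the FGR argument: the dilation trick disposes of the rank-one-codimension case, the removal-lemma reduction handles density regular systems (and is in fact what the paper proves for itself in Lemma~\ref{lemma:extendeddensitysupersaturation}), and for general partition regular systems one passes to Deuber $(k,p,c)$-sets and runs a Varnavides-type count on those. You also correctly identify where the genuine work lies, namely the supersaturation statement for Deuber sets plus the overcount control, and your Schur-triple remark correctly explains why the removal-lemma shortcut cannot cover the full partition regular class.

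One detail in your sketch is imprecise and worth flagging. You claim the assignment of a proper solution to each monochromatic $(k,p,c)$-set is ``boundedly-to-one.'' Copies of a $(k,p,c)$-set in $[n]$ number $\Theta(n^{k+1})$, while proper solutions number $\Theta(n^{\ell})$ with $\ell=m-\rank{A}$; since generically $k+1>\ell$, the map cannot be $O(1)$-to-one without producing more monochromatic solutions than exist in total. The correct statement is that each solution can arise from at most $O(n^{k+1-\ell})$ parameter tuples $(x_0,\dots,x_k)$, which is a short linear-algebra computation once the canonical solution inside a Deuber set is written as a linear image of the parameters. This does not change your strategy, but it is part of the ``divisor-style bookkeeping'' you defer and is slightly more than bounded overcounting. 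Since the paper itself cites~\cite{FGR88} without reproducing any of this, your treatment is aligned with the paper's.
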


\begin{lemma}[Theorem~2 in Frankl, Graham and Rödl~\cite{FGR88}] \label{lemma:densitysupersaturation}
	For a given density regular matrix $A \in \mM_{r \times m}(\ZZ)$ and $\delta > 0$ there exists $\zeta  = \zeta(A,\delta) > 0$ such that any subset $T \subseteq [n]$ satisfying $|T| \geq \delta n$ contains at least $\zeta \, | \, \mS_0(A) \cap [n]^m|$ proper solutions for $n$ large enough.
\end{lemma}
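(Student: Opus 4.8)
The plan is to deduce this supersaturation statement from the arithmetic removal lemma for systems of linear equations, rather than to reprove the original argument of Frankl, Graham and R\"odl; this is close in spirit to how the present note derives its other supersaturation results. First I would reduce the claim to the quantitative form: there is $\zeta' = \zeta'(A,\delta) > 0$ such that every $T \subseteq [n]$ with $|T| \ge \delta n$ satisfies $|\mS_0(A) \cap T^m| \ge \zeta' n^{m - \rank{A}}$ once $n$ is large. This is equivalent to the statement, since by \eqref{eq:trivialupperbound} we have $|\mS_0(A) \cap [n]^m| \le n^{m - \rank{A}}$, so any such $\zeta'$ also serves as $\zeta$.

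The main step is to invoke the removal lemma of Kr\'al', Serra and Vena~\cite{KSV12}, in the form counting solutions with pairwise distinct entries: for every $\eta > 0$ there is $\zeta' = \zeta'(A,\eta) > 0$ such that if $T \subseteq [n]$ contains fewer than $\zeta' n^{m - \rank{A}}$ proper solutions to $A$, then one can delete at most $\eta n$ elements from $T$ to obtain $T' \subseteq T$ with $\mS_0(A) \cap (T')^m = \emptyset$. Here one works over $\ZZ$ by embedding $[n]$ into a cyclic group of size $O(n)$ in the usual way, and the reduction to proper solutions is routine since for the abundant matrix $A$ (see Lemma~\ref{lemma:partitionabundant}) the solutions with a repeated entry number only $O(n^{m - \rank{A} - 1})$.

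Granting this, the proof concludes quickly. Apply the removal lemma with $\eta := \delta/2$, obtaining $\zeta' = \zeta'(A,\delta/2) > 0$, and suppose towards a contradiction that for arbitrarily large $n$ there is $T \subseteq [n]$ with $|T| \ge \delta n$ but fewer than $\zeta' n^{m - \rank{A}}$ proper solutions. Then we may delete at most $(\delta/2) n$ elements to get $T' \subseteq [n]$ with $|T'| \ge (\delta/2) n$ and $\mS_0(A) \cap (T')^m = \emptyset$. But $A$ is density regular, so for $n$ large we have $[n] \to_{\delta/2} A$, meaning every subset of $[n]$ of density at least $\delta/2$ contains a proper solution --- contradicting the choice of $T'$. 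Hence $|\mS_0(A) \cap T^m| \ge \zeta' n^{m - \rank{A}} \ge \zeta' |\mS_0(A) \cap [n]^m|$ for all such $T$ and all large $n$, which is the assertion with $\zeta = \zeta'$.

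The main obstacle is really the removal-lemma step: the arithmetic removal lemma is a deep theorem in its own right, and everything else in the argument is bookkeeping. If instead one wanted a self-contained proof following the original approach of Frankl, Graham and R\"odl, the difficulty migrates to the amplification: one averages over the affinely embedded copies of a fixed box $[M]^d$ in $[n]$, where $d = m - \rank{A}$ and $M = M(A,\delta)$ is the threshold coming from density regularity of $A$ over $\ZZ^d$ (which holds because $A$ is invariant), and the delicate point is to verify that this averaging genuinely produces $\Omega(n^{m - \rank{A}})$ proper solutions --- the correct order --- rather than the weaker $\Omega(n^2)$ that averaging over one-dimensional progressions alone would give.
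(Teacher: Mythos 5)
The paper does not re-prove this lemma at all: it is cited as Theorem~2 of Frankl, Graham and R\"odl~\cite{FGR88} and used as a black box. What the paper \emph{does} prove is the strict generalization Lemma~\ref{lemma:extendeddensitysupersaturation}, which replaces the hypothesis ``density regular'' (equivalently $\pi(A)=0$) by ``positive, irredundant, and $\delta > \pi(A)$''; that proof goes through the Kr\'al'--Serra--Vena removal lemma (Theorem~\ref{thm:removallemma}) after embedding $[n]$ into $\FF_q$ for a prime $q = \Theta(n)$ via Bertrand--Chebyshev. Your proposal is exactly the specialization of that argument to $\pi(A)=0$, phrased in the contrapositive (few solutions $\Rightarrow$ small hitting set $\Rightarrow$ a dense solution-free set, contradicting density regularity, rather than the paper's direct: no small hitting set exists $\Rightarrow$ many solutions). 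That is a cosmetic difference only. The one place you diverge in substance is the conversion from ``all solutions'' to ``proper solutions'': you argue directly that solutions with a repeated entry are $O(n^{m-\rank{A}-1})$, appealing to abundance (via Lemma~\ref{lemma:partitionabundant}), whereas the paper invokes Lemma~\ref{lemma:countingsolutions} to say proper solutions make up a constant fraction of all solutions. Both are valid; your version deserves a line justifying that abundance rules out any forced equality $x_i=x_j$ among solutions (if $e_i - e_j$ lay in the row space of $A$, deleting columns $i,j$ would drop the rank, contradicting abundance), which makes the $O(n^{m-\rank{A}-1})$ bound precise. Given that density regular implies positive, irredundant and abundant with $\pi(A)=0$, you could in fact observe that the statement is literally a corollary of the paper's own Lemma~\ref{lemma:extendeddensitysupersaturation}, which would make the note self-contained without any new argument. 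Your closing remarks about the original FGR amplification-over-boxes argument are accurate but not needed.
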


We will extend Lemma~\ref{lemma:densitysupersaturation} to cover the scope of this note by using an \emph{arithmetic removal lemma}. Green~\cite{Gr05} first formulated such a statement for linear equations in an abelian group. Later Shapira~\cite{Sh10} as well as independently Král', Serra and Vena~\cite{KSV12} proved a removal lemma for linear maps in finite fields. We will state it here in a simplified version.

\begin{theorem}[Removal Lemma~\cite{KSV12}] \label{thm:removallemma}
	Let $\FF_q$ be the finite field of order $q$. Let $X \subset \FF_q$ be a subset of $\FF_q$ and $A \in \mM_{r \times m}(\FF_q)$ a matrix of full rank. For $\mS = \{ \bx \in \FF_q^m : A \cdot \bx^T = \mathbf{0}^T \}$ and every $\epsilon > 0$ there exists an $\eta = \eta(\epsilon,r,m)$ such that if $|\mS \cap X^m| < \eta \, |\mS|$ then there exists a set $X' \subset X$ with $|X'| < \epsilon q$ and $\mS \cap (X \backslash X')^m = \emptyset$.
\end{theorem}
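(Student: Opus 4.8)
The plan is to prove this as in the cited works of Shapira~\cite{Sh10} and Kr\'al', Serra and Vena~\cite{KSV12}, namely by reduction to the hypergraph removal lemma; none of the earlier statements in this note are needed, since the arithmetic removal lemma is itself one of its primitive inputs. First I would make two reductions. Since $A$ has full rank $r$, the solution set $\mS = \{ \bx \in \FF_q^m : A \bx^T = \mathbf{0}^T \}$ is a linear subspace of dimension $d := m - r$ with $|\mS| = q^d$. If $d \leq 1$ the claim is immediate: then $|\mS| \leq q$, so the hypothesis already bounds the number of solutions by $\eta\,|\mS| \leq \eta q$, and deleting one coordinate value from each such solution produces a set $X'$ with $|X'| < \eta q \leq \epsilon q$ (choose $\eta \leq \epsilon$) after which $\mS \cap (X \setminus X')^m = \emptyset$. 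So assume $d \geq 2$. Secondly, it suffices to prove the ``coloured'' version in which $X$ is replaced by a tuple $X_1, \dots, X_m \subseteq \FF_q$ and one is allowed to delete fewer than $\epsilon q$ elements from each $X_i$ separately; the stated version follows by setting all $X_i = X$, taking $X'$ to be the union of the $m$ exceptional sets, and rescaling $\epsilon$ by a factor $m$.

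Next I would build the auxiliary hypergraph. After a harmless permutation of columns, row-reduce $A$ to $(I_r \mid B)$, so that a solution is parametrised by its $d$ free coordinates $\by = (y_1, \dots, y_d)$, the remaining coordinates being prescribed linear forms $x_i = \ell_i(\by)$. From $X_1, \dots, X_m$ I would construct an $M$-partite $(M-1)$-uniform hypergraph $\mathcal H$, where $M = M(A)$ and in the prototypical single-equation case $M = m$ with target pattern the complete $(m-1)$-uniform hypergraph on $m$ vertices (this is exactly the encoding used to reduce the $k$-term arithmetic progression removal lemma to hypergraph removal): the vertex classes are copies of $\FF_q$ indexed by the free coordinates and by the partial sums occurring in the forms $\ell_i$, and a transversal $(M-1)$-set is declared a hyperedge precisely when a prescribed linear form of its vertices lies in the relevant $X_i$. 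The construction is arranged so that, up to a fixed power of $q$, the complete $(M-1)$-uniform sub-hypergraphs of $\mathcal H$ on $M$ vertices are exactly the solutions lying in $X_1 \times \dots \times X_m$, and --- crucially, and this is where the \emph{linearity} of the system, as opposed to a polynomial system, and the rank hypothesis enter --- every hyperedge lies in at most one such copy. Hence $|\mS \cap (X_1 \times \dots \times X_m)| < \eta\,|\mS|$ translates into the assertion that $\mathcal H$ has at most $\delta\, q^{M}$ copies of the pattern, for some $\delta = \delta(\eta)$ that tends to $0$ with $\eta$.

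The hypergraph removal lemma (Gowers; Nagle, R\"odl, Schacht and Skokan; R\"odl and Skokan) then yields a deletion of $o(q^{M-1})$ hyperedges of $\mathcal H$ after which no copy of the pattern --- hence no solution with all coordinates in $X$ --- survives. The final step is to convert this edge-deletion into the deletion of only $o(q)$ \emph{elements} of $\FF_q$, which is what the statement demands: this uses that each hyperedge of $\mathcal H$ records the value of a single linear form in a single $X_i$, so that each deleted hyperedge can be charged to one element of one $X_i$, together with the quasirandomness of the regular partition underlying the removal lemma. Restricting each $X_i$ to the resulting smaller set kills every copy of the pattern, hence every solution, and tracking the constants gives an $\eta$ depending only on $\epsilon, r, m$.

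I expect two points to carry the real weight. The first is the encoding itself: choosing the classes and hyperedge sets of $\mathcal H$ so that complete transversal sub-hypergraphs correspond bijectively (up to a fixed $q$-power) to solutions while the ``at most one copy per hyperedge'' property holds, uniformly over an arbitrary full-rank $r \times m$ matrix rather than a single equation or a $k$-term progression, is where most of the bookkeeping lives. The second is the passage from ``few hyperedges'' to ``few elements'', which is not formal and is precisely what separates the arithmetic removal lemma from a direct corollary of hypergraph removal. As an alternative I would keep in mind the hypergraph-free route via Green's arithmetic regularity lemma~\cite{Gr05} together with a generalised von Neumann inequality, which handles the single-equation case transparently and, with more bookkeeping, the general one.
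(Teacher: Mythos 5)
The paper does not prove Theorem~\ref{thm:removallemma}; it is imported as a black box from Kr\'al', Serra and Vena~\cite{KSV12} (restated in simplified form), so there is no ``paper's own proof'' to compare against. Your sketch is a fair outline of the strategy that \cite{KSV12} and Shapira actually use --- a reduction to the hypergraph removal lemma via an auxiliary multipartite hypergraph whose transversal cliques encode solutions --- and you correctly isolate the two places where the real work sits: the construction of $\mathcal H$ for a general full-rank $r\times m$ system, and the passage from deleting few hyperedges to deleting few \emph{elements}. The easy reduction you make to $d\geq 2$ and to the coloured version is fine.

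One substantive caveat about the final step. The ``charge each deleted hyperedge to one element of one $X_i$'' framing does not on its own give what you need: every element of $X_i$ is represented by roughly $q^{d-1}$ hyperedges, so knowing that $o(q^{d})$ edges were deleted does not directly bound the number of distinct elements charged. The argument that actually closes this gap in the literature runs through the contrapositive: if no $X'$ of size $<\epsilon q$ kills all solutions, a greedy extraction yields $\Omega(q)$ pairwise entry-disjoint solutions; the translation-invariance of the encoding (each solution gives rise to $q^{d-1}$ copies of the target pattern, and entry-disjoint solutions give edge-disjoint families of copies) then forces $\Omega(q^{d})$ edge-disjoint copies, so by the hypergraph removal lemma $\mathcal H$ has $\Omega(q^{M})$ copies and hence $X^m$ contains $\Omega(q^{d}) = \Omega(|\mS|)$ solutions, contradicting $|\mS\cap X^m|<\eta|\mS|$ for $\eta$ small. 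Since the paper does not prove the removal lemma, this gap is not relevant to the note itself, but it is the point where your sketch, if expanded, would need to change course; the edge-disjointness coming from entry-disjoint solutions, rather than any quasirandomness of the regularity partition, is what carries the count.
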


Applying this result, we formulate the following extension of Lemma~\ref{lemma:densitysupersaturation}.

\begin{lemma}[Supersaturation] \label{lemma:extendeddensitysupersaturation}
	For a given $r,m \in \NN$, positive and irredundant matrix $A \in \mM_{r \times m}(\ZZ)$ and $\delta > \pi (A)$ there exists $\zeta  = \zeta(\delta,A) > 0$ such that any subset $T \subseteq [n]$ satisfying $|T| \geq \delta n$ contains at least $\zeta \, | \, \mS_0(A) \cap [n]^m|$ proper solutions for $n$ large enough.
\end{lemma}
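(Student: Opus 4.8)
The plan is to move the problem into a finite field and then play the arithmetic removal lemma (Theorem~\ref{thm:removallemma}) off against the definition of $\pi(A)$. I may assume $A$ has full rank $r$ — replacing $A$ by any full-rank matrix with the same homogeneous solution space affects neither $\mS(A)$, $\mS_0(A)$, $\pi(A)$, nor positivity or irredundancy — and that $2 \le m$, $r \le m-1$ and $\pi(A) < \delta \le 1$, since otherwise $\mS_0(A)$ is degenerate or the conclusion is vacuous (recall $\pi(A)<1$ by Lemma~\ref{lemma:pibound}). For each large $n$ I would fix a prime $q = q(n)$ with $C_1 n \le q \le C_2 n$, where $C_1 = C_1(A)$ is large enough that (a) every row of $A$ evaluated on a point of $[n]^m$ has absolute value strictly below $q$, (b) $A$ still has rank $r$ over $\FF_q$, and (c) for every pair $i \neq j$ the system $A\bx = \mathbf{0}$ augmented by the equation $x_i = x_j$ has rank $r+1$ over $\FF_q$. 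Property (c) holds over $\mathbb{Q}$ precisely because irredundancy supplies a proper solution $\bz$ with $z_i \neq z_j$, and since (b) and (c) each rule out only finitely many primes, a suitable $C_1$ exists while Bertrand's postulate supplies $q$ (so $C_2 = 2C_1$ works). Identifying $[n]$ with $\{1,\dots,n\} \subset \FF_q$, property (a) ensures that for every $Y \subseteq [n]$ a solution of $A\bx = \mathbf{0}$ over $\FF_q$ with all coordinates in $Y$ is an honest integer solution, and conversely, so $\mS_{\FF_q}(A) \cap Y^m = \mS(A) \cap Y^m$ with properness preserved; properties (b) and (c) give $|\mS_{\FF_q}(A)| = q^{m-r} = \Theta(n^{m-r})$ and $|\{\bx \in \mS_{\FF_q}(A) : x_i = x_j\}| = q^{m-r-1}$ for each pair $i \neq j$.

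Next I would argue by contradiction: suppose some $T \subseteq [n]$ with $|T| \ge \delta n$ contains fewer than $\zeta\,|\mS_0(A)\cap[n]^m|$ proper solutions, with $\zeta = \zeta(\delta,A)$ to be fixed at the end. View $X = T$ inside $\FF_q$ and split $\mS_{\FF_q}(A) \cap X^m$ into proper and improper members. The proper members coincide with the proper integer solutions in $T^m$, of which there are fewer than $\zeta\,|\mS_0(A)\cap[n]^m| \le \zeta\, n^{m-r} \le \zeta\, q^{m-r}$ by \eqref{eq:trivialupperbound}, while the improper ones number at most $\binom{m}{2}\,q^{m-r-1} = o(q^{m-r})$. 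Hence $|\mS_{\FF_q}(A) \cap X^m| \le \zeta\,q^{m-r} + o(q^{m-r})$, which for $n$ large falls below $\eta(\epsilon_0,r,m)\,|\mS_{\FF_q}(A)|$ once we set $\zeta = \tfrac12\,\eta(\epsilon_0,r,m)$ for a small $\epsilon_0 = \epsilon_0(\delta,A)$ still to be chosen. The removal lemma then yields $X' \subset X$ with $|X'| < \epsilon_0 q$ and $\mS_{\FF_q}(A) \cap (X\setminus X')^m = \emptyset$. Let $T' \subseteq [n]$ be the integer set corresponding to $X'$, and choose $\epsilon_0$ so that $C_2\,\epsilon_0 < (\delta - \pi(A))/2$; then $|T \setminus T'| = |T| - |T'| > \delta n - \epsilon_0 C_2 n > \tfrac{\delta + \pi(A)}{2}\,n > \pi(A)\,n \ge \mathrm{ex}(n,A)$ for $n$ large, so $T \setminus T'$ must contain a proper integer solution. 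But that solution would lie in $\mS(A) \cap (T\setminus T')^m = \mS_{\FF_q}(A) \cap (X\setminus X')^m = \emptyset$, a contradiction. Thus $T$ contains at least $\zeta\,|\mS_0(A)\cap[n]^m|$ proper solutions, with $\zeta = \tfrac12\,\eta(\epsilon_0(\delta,A),r,m) > 0$ depending only on $\delta$ and $A$; here $|\mS_0(A)\cap[n]^m| > 0$ by Lemma~\ref{lemma:countingsolutions}, since positivity together with irredundancy yields a proper solution in $\NN^m$ by passing from $\by \in \mS(A)\cap\NN^m$ and $\bz \in \mS_0(A)$ to $t\by + \bz$ for all but finitely many $t$.

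The step I expect to cost the most care is the accounting for improper solutions: I need to know that the solutions of $A\bx=\mathbf{0}$ over $\FF_q$ with some repeated coordinate form a lower-order fraction of all $q^{m-r}$ solutions, and this is exactly where irredundancy is used — it forces each constraint $x_i = x_j$ to genuinely cut the dimension (property (c)), leaving only $O(n^{m-r-1})$ such solutions. The rest is routine bookkeeping: checking that a prime $q = \Theta(n)$ can be chosen large enough that reduction modulo $q$ neither manufactures spurious solutions nor lowers any of the finitely many ranks in play, and calibrating the removal-lemma parameter $\epsilon_0$ against the ratio $q/n$ so that the deleted set $T'$ remains small enough to keep $|T\setminus T'|$ above the extremal threshold $\pi(A)n$.
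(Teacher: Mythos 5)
Your proof is correct and follows essentially the same route as the paper: transfer $[n]$ into $\FF_q$ for a suitable Bertrand prime $q = \Theta(n)$, play the Kr\'al'--Serra--Vena removal lemma against the extremal threshold $\pi(A)$, and convert the resulting lower bound on $\FF_q$-solutions into a lower bound on proper integer solutions in $T$. The only differences are stylistic: you phrase the argument as a contradiction and explicitly count the improper $\FF_q$-solutions via the rank-dropping property (c), whereas the paper argues directly (``one must remove many elements, so the removal lemma yields many solutions'') and absorbs the improper solutions into the constant via $|\mS_0(A)\cap[n]^m| \geq c_0\,|\mS(A)\cap[n]^m|$; both handle the same point.
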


\begin{proof}
	Let $q = q(A,n)$ be a prime number between $2 m n \max(|A|)$ and $4 m n \max(|A|)$ and $\FF_q$ the finite field with $q$ elements. Here $\max (|A|)$ refers to the maximal absolute entry in $A$. Note that such a prime number exists for example because of the Bertrand–Chebyshev Theorem. We have $\FF_q \cong \ZZ_q$ and we can identify the integers with their corresponding residue classes in $\FF_q$. The matrix $A$ now defines a map from $\FF_q^m$ to $\FF_q^r$. A solution in $\mS(A)$ clearly lies in the $\mS$ and, as we have chosen $q$ large enough, all canonical representatives from $\mS \cap [n]^m$ also lie in $\mS(A) \cap [n]^m$ for $n \geq \max |A|$.

	Next, set $\delta' = (\delta + \pi (A))/2$ and let $n$ be large enough such that any subset of density at least $\delta'$ in  $[n]$ contains a proper solutions. Note that $\delta > \delta' > \pi(A)$. Given a subset $T \subseteq [n]$ satisfying $|T| \geq \delta n$ consider the corresponding set $X$ of residue classes in $\FF_q$. One needs to remove at least $(\delta - \delta') n$ elements from $T$ in order for $T^m$ to avoid $\mS_0(A)$ in $[n]$, so one needs to remove at least an
		$$\epsilon = \frac{(\delta - \delta') n}{q} \geq \frac{(\delta - \delta')}{4 m \max(|A|)} > 0$$
		proportion of elements in $\FF_q$ from $X$ so that $X^m$ avoids $\mS$ in $\FF_q$. It follows from Theorem~\ref{thm:removallemma} that $|\mS \cap X^m| \geq \eta |\mS|$ for some $\eta = \eta(\epsilon,r,m)$. Since we have chosen $q$ large enough, it follows that $T$ contains at least an $\eta$ proportion of $\mS(A) \cap [n]^m$. An easy consequence of Equation~\eqref{eq:trivialupperbound} and Lemma~\ref{lemma:countingsolutions} is that $\lim_{n \to \infty} | \, \mS_0(A) \cap [n]^m| / | \, \mS(A) \cap [n]^m| \geq c_0$ for $c_0 = c_0(A) > 0$ as given by Lemma~\ref{lemma:countingsolutions}. It follows result holds for $n$ large enough and $\zeta = \zeta(\delta,A) = (c_0 \, \eta) / 2$.
\end{proof}

\subsection{Hypergraph Containers}

The development of \emph{hypergraph containers} by Balogh, Morris and Samotij~\cite{BMS14} as well as independently Thomason and Saxton~\cite{ST15} has opened a new, easy and unified framework to proving sparse results. Let us start by stating the Hypergraph Container Theorem as given by Balogh, Morris and Samotij.

Given a hypergraph $\mH$ we denote its vertex set by $V(\mH)$ and its set of hyperedges by $E(\mH)$. The cardinality of these sets will be respectively denoted by $v(\mH)$ and $e(\mH)$. Given some subset of vertices $A \subseteq V(\mH)$ we denote the subgraph it induces in $\mH$  by $\mH[A]$ and its degree by $\deg_{\mH}(A) = |\{ e \in E(\mH) : A \subseteq e \}|$. For $\ell \in \NN$ we denote the \emph{maximum $\ell$-degree} by $\Delta_{\ell}(\mH) = \max \{ \deg_{\mH}(A) : A \subseteq V(\mH) \text{ and } |A| = \ell \}$. Let the set of independent vertex sets in $\mH$ be denoted by $\mI(\mH)$. Lastly, let $\mH$ be a uniform hypergraph, $\mF$ an increasing family of subsets of $V(\mH)$ and $\epsilon > 0$. We say that $\mH$ is \emph{$(\mF,\epsilon)$-dense} if $e(\mH[A]) \geq \epsilon \, e(\mH)$ for every $A \in \mF$.

\begin{theorem}[Hypergraph Containers, Theorem~2.2 in~\cite{BMS14}] \label{thm:hypergraphcontainer}
	For every $m \in \NN$, $c > 0$ and $\epsilon > 0$, there exists a constant $C = C(m,c,\epsilon) > 0$ such that the following holds. Let $\mH$ be an $m$-uniform hypergraph and let $\mF \subseteq 2^{V(\mH)}$ be an increasing family of sets such that $|A| \geq \epsilon v(\mH)$ for all $A \in \mF$. Suppose that $\mH$ is $(\mF,\epsilon)$-dense and $p \in (0,1)$ is such that, for every $\ell \in\{1, \dots, k \}$,
		\begin{equation}
			\Delta_{\ell}(\mH) \leq c \; p^{\ell-1} \, \frac{e(\mH)}{v(\mH)}.
		\end{equation}
		Then there exists a family $\mT \subseteq \binom{V(\mH)}{\leq Cp \, v(\mH)}$ and functions $f : \mT \to \widebar{\mF}$ and $g : \mI(\mH) \to \mT$ such that for every $I \in \mI(\mH)$,
		\begin{equation}
			g(I) \subseteq I \quad \text{and} \quad I \backslash g(I) \subseteq f(g(I)).
		\end{equation}
\end{theorem}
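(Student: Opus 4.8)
The plan is to prove this by the \emph{container algorithm} of Balogh, Morris and Samotij, using induction on the uniformity $m$. The base case $m=1$ is trivial: an independent set simply avoids the singleton edges, so one takes $\mT = \{\emptyset\}$ with the single container equal to the set of non-edge vertices, which spans no edges and is therefore in $\widebar{\mF}$ by $(\mF,\epsilon)$-density. For $m \geq 2$ the goal is to attach to each independent set $I \in \mI(\mH)$ a small \emph{fingerprint} $g(I) \subseteq I$, together with a \emph{container} $f(g(I)) \supseteq I \setminus g(I)$ that depends only on the fingerprint, by greedily peeling off the highest-degree vertices of $I$.

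Concretely, I would fix a linear order on $V(\mH)$ and run the following process on input $I$. Maintain an available set $A$, initially $V(\mH)$, and a fingerprint $S$, initially empty; stop as soon as $e(\mH[A]) < \epsilon\, e(\mH)$. At each step let $v$ be the maximum-degree vertex of $\mH[A]$ (ties broken by the order). If $v \notin I$, just delete $v$ from $A$. If $v \in I$, add $v$ to $S$ and form the link hypergraph $\mH_v$ on $A \setminus \{v\}$ whose edges are the sets $e \setminus \{v\}$ over $e \in E(\mH[A])$ with $v \in e$; this is $(m-1)$-uniform, and since $I$ is independent, $I \cap (A \setminus \{v\})$ is independent in $\mH_v$. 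Applying the inductive statement to $\mH_v$ yields a subfingerprint $S_v \subseteq I \cap (A \setminus \{v\})$ and a container $C_v$ with $C_v \cup S_v \supseteq I \cap (A \setminus \{v\})$ and $e(\mH_v[C_v])$ small; one then adds $S_v$ to $S$ and deletes from $A$ every vertex outside $C_v \cup S_v$ (and $v$ itself). Finally set $g(I) = S$, and define $f$ on a fingerprint $T$ by replaying the same sequence of deletions — they depend only on the order, the running set $A$, and the recorded subfingerprints, all recoverable from $T$ — and outputting the final set $A$.

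The quantitative content is to show the process halts with $|g(I)| \leq C\, p\, v(\mH)$. This uses the hypothesis $\Delta_\ell(\mH) \leq c\, p^{\ell-1} e(\mH)/v(\mH)$ for $\ell \in [m]$: as long as $e(\mH[A]) \geq \epsilon\, e(\mH)$, the maximum degree of $\mH[A]$ is at least its average degree $m\, e(\mH[A])/|A| \geq m\epsilon\, e(\mH)/v(\mH)$, which matches the upper bound $\Delta_1(\mH) \leq c\, e(\mH)/v(\mH)$ up to constants, while the bounds on $\Delta_\ell(\mH)$ for $\ell \geq 2$ control the co-degrees inside the link hypergraphs and hence the sizes of the $C_v$; a potential/counting argument then shows that each vertex placed in $S$ destroys an amount of edge mass comparable to $p\, e(\mH)$, so that after $O(p\, v(\mH))$ such placements $\mH[A]$ has become light (i.e. $e(\mH[A]) < \epsilon\,e(\mH)$). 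Since $\mH$ is $(\mF,\epsilon)$-dense, a light set is not in $\mF$, so $f(g(I)) = A \in \widebar{\mF}$; and $I \setminus g(I) \subseteq A$ by construction. Collecting all attainable fingerprints gives $\mT \subseteq \binom{V(\mH)}{\leq C p\, v(\mH)}$ with the required $f$ and $g$.

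The main obstacle is the bookkeeping across the $m$ levels of the induction: the constants $c$ and $\epsilon$ — and thus the stopping thresholds and the fingerprint budget — degrade at each recursive call into a link hypergraph, and one must verify that the degree conditions for $\mH$ do imply usable degree conditions for every $\mH_v$ that arises, uniformly in $n$. One must also check that $f$ truly depends only on $g(I)$ and not on $I$, i.e. that the entire deletion history is reconstructible from the recorded fingerprint alone, and that the greedy choice of $v$ interacts cleanly with the recursive calls. This is precisely the technical core of the Balogh--Morris--Samotij argument, which I would follow.
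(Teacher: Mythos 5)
The paper does not prove this result; it is quoted verbatim as Theorem~2.2 of Balogh, Morris and Samotij~\cite{BMS14}, and only its consequences (Theorem~\ref{thm:sparsehypergraphcontainer} and Corollary~\ref{cor:containercorollary}) are actually used. So there is no in-paper proof for you to match; what you have written is a sketch of the original BMS argument, and should be judged against that. The skeleton is right: induction on uniformity, the greedy scythe that peels max-degree vertices, recursion into link hypergraphs only at vertices of $I$, and reconstructibility of $f(g(I))$ from the fingerprint alone. The base case $m=1$ is also fine.

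However, the one quantitative step you do offer --- that \emph{each vertex placed in $S$ destroys an amount of edge mass comparable to $p\,e(\mH)$} --- is incorrect and cannot be the termination mechanism. A vertex lies in at most $\Delta_1(\mH) \leq c\,e(\mH)/v(\mH)$ edges of $\mH$, so deleting any $Cp\,v(\mH)$ vertices destroys at most $Cc\,p\,e(\mH)$ edges, which is $o(e(\mH))$ as $p\to 0$ and never pushes $e(\mH[A])$ below $\epsilon\,e(\mH)$ by direct accounting. The actual argument in~\cite{BMS14} proves a single-round lemma which, at the cost of a fingerprint of size $O(p\,v(\mH))$, outputs a container whose edge count (or vertex count) has dropped by a fixed \emph{multiplicative} factor, and then iterates this a bounded number of rounds; the hypotheses on $\Delta_\ell(\mH)$ for $\ell\geq 2$ are what guarantee that the link hypergraphs encountered in each round still satisfy usable degree bounds. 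That multiplicative decrease per round is exactly the step your plan gestures at but does not supply, so as written this is an outline of the right strategy rather than a proof.
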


The statement gives the existence of a small number of \emph{containers} $\widebar{\mF}$ and some \emph{fingerprints} $\mT$ so that every independent set $I$ in $\mH$ is identified with a fingerprint $g(I)$ that determines a container $f(g(I))$ which contains the independent set.

\medskip

Next, let $\mH = (\mH_n)_{n \in \NN}$ be a sequence of $m$-uniform hypergraphs and let $\alpha \in [0,1)$. We say that $\mH$ is \emph{$\alpha$-dense} if for every $\delta > 0$, there exist some $\epsilon > 0$ such that for $U \subseteq V(\mH_n)$ which satisfies $|U| > (\alpha + \delta) \, v(\mH_n)$ we have $e(\mH_n[U]) > \epsilon \, e(\mH_n)$ for $n$ large enough. Balogh, Morris and Samotij proved the following consequence of their container statement.

\begin{theorem}[Sparse Sets through Hypergraph Containers, Theorem~5.2 in~\cite{BMS14}] \label{thm:sparsehypergraphcontainer}
	Let $\mH = (\mH_n)_{n \in \NN}$ be a sequence of $m$-uniform hypergraphs, $\alpha \in [0,1)$ and let $C > 0$. Suppose that $q = q(n)$ is a sequence of probabilities such that for all sufficiently large $n$ and for every $\ell \in\{ 1, \dots , m \}$ we have
	\begin{equation}
		\Delta_{\ell}(\mH_n) \leq C \, q(n)^{\ell-1} \, \frac{e(\mH_n)}{v(\mH_n)}.
	\end{equation}
	If $\mH$ is $\alpha$-dense, then for every $\delta > 0$, there exists a constant $c = c(C,\alpha,m) > 0$ such that if $p(n) > c \, q(n)$ and $p(n) \, v(\mH_n) \to \infty$ as $n \to \infty$, then asymptotically almost surely 
	\begin{equation}
		\alpha \big( \mH_n [V(\mH_n)_{p(n)}] \big) \leq ( \alpha + \delta ) \, p(n) \, v(\mH_n).
	\end{equation}
\end{theorem}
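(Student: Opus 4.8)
The plan is to apply the Hypergraph Container Theorem (Theorem~\ref{thm:hypergraphcontainer}) to each $\mH_n$ with the family of ``large'' vertex subsets in the role of $\mF$, and then to bound the resulting containers by a union bound over the fingerprints that is weighted by the probability that a given fingerprint is contained in the random set. Throughout, fix a large $n$, abbreviate $v = v(\mH_n)$, $p = p(n)$, $q = q(n)$ and $R = V(\mH_n)_{p}$, write $\alpha(\cdot)$ for the independence number, and set $\delta' = \delta/3$.

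First I would set up the containers. Since $\mH$ is $\alpha$-dense, applying its definition with a parameter slightly smaller than $\delta'$ gives $\epsilon_0 = \epsilon_0(\delta) > 0$ with $e(\mH_n[U]) > \epsilon_0 \, e(\mH_n)$ whenever $|U| \geq (\alpha + \delta') v$. I then apply Theorem~\ref{thm:hypergraphcontainer} to $\mH_n$ with $\mF = \{ U \subseteq V(\mH_n) : |U| \geq (\alpha + \delta') v \}$: this family is increasing, satisfies $|A| \geq \epsilon v$ for all $A \in \mF$ with $\epsilon := \min\{ \epsilon_0, \alpha + \delta' \} > 0$, and makes $\mH_n$ be $(\mF, \epsilon)$-dense, while the assumed bounds on $\Delta_{\ell}(\mH_n)$ for $\ell \in \{1,\dots,m\}$ are exactly the degree hypotheses of Theorem~\ref{thm:hypergraphcontainer} with its probability parameter taken to be $q$ and its input constant taken to be $C$. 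We thus obtain a constant $C_0 = C_0(m, C, \epsilon)$, a family $\mT \subseteq \binom{V(\mH_n)}{\leq C_0 q v}$, and maps $f : \mT \to \widebar{\mF}$ and $g : \mI(\mH_n) \to \mT$ with $g(I) \subseteq I$ and $I \setminus g(I) \subseteq f(g(I))$ for every $I \in \mI(\mH_n)$; in particular $|f(S)| < (\alpha + \delta') v$ for all $S \in \mT$.

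Next I would reduce the assertion to a union bound. If $\alpha(\mH_n[R]) > (\alpha + \delta) p v$, there is an independent set $I \subseteq R$ with $|I| > (\alpha + \delta) p v$, and since $\mH_n[R]$ is induced we have $I \in \mI(\mH_n)$; putting $S := g(I) \in \mT$ gives $S \subseteq R$, $|S| \leq C_0 q v$ and $I \subseteq S \cup f(S)$, hence $|f(S) \cap R| \geq |I| - |S| > (\alpha + \delta) p v - C_0 q v$. Taking $c \geq 6 C_0/\delta$ forces $C_0 q v \leq (C_0/c) p v \leq (\delta/6) p v$ whenever $p > cq$, so, using $S \subseteq R$ to bound $|f(S) \cap R| \leq |S| + |(f(S) \setminus S) \cap (R \setminus S)|$, the event $\alpha(\mH_n[R]) > (\alpha + \delta) p v$ is contained in
\[
	\bigcup_{S \in \mT} \Big( \{ S \subseteq R \} \cap \big\{ \, |(f(S) \setminus S) \cap (R \setminus S)| \geq (\alpha + \tfrac{2}{3}\delta)\, p v \big\} \Big) .
\]
For fixed $S$, the event $\{ S \subseteq R \}$ has probability $p^{|S|}$ and depends only on $R \cap S$, while $|(f(S) \setminus S) \cap (R \setminus S)|$ is a binomial variable on the disjoint (hence independent) remaining coordinates with mean at most $|f(S)|\, p < (\alpha + \tfrac{1}{3}\delta) p v$; as the ratio $(\alpha + \tfrac{2}{3}\delta)/(\alpha + \tfrac{1}{3}\delta)$ is bounded away from $1$, a Chernoff bound shows the second event has probability at most $e^{-c_1 p v}$ for some $c_1 = c_1(\alpha,\delta) > 0$. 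By independence, summing $p^{|S|} e^{-c_1 p v}$ over $S$ grouped by cardinality yields
\[
	\PP{ \alpha(\mH_n[R]) > (\alpha + \delta) p v } \leq e^{-c_1 p v} \sum_{j=0}^{\lfloor C_0 q v \rfloor} \binom{v}{j} p^j \leq e^{-c_1 p v} (C_0 q v + 1) \Big( \frac{e p}{C_0 q} \Big)^{C_0 q v} ,
\]
using $\binom{v}{j} p^j \leq (e v p/j)^j$ and the monotonicity of this quantity for $j \leq C_0 q v < e v p$. Writing $\rho = p/q \geq c$, the exponent on the right equals $p v \big( {-c_1} + C_0 \rho^{-1} \log(e\rho/C_0) \big) + O(\log(q v))$, so choosing $c$ large enough that $C_0 \rho^{-1} \log(e\rho/C_0) \leq c_1/2$ for all $\rho \geq c$, together with $pv \to \infty$, makes the right-hand side tend to $0$. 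This gives the claim.

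The step I expect to be the main obstacle is this final union bound: the fingerprint family $\mT$ can be far larger than $e^{o(pv)}$, so a naive union bound (weighting each member by $1$) fails; the remedy is to weight each fingerprint $S$ by $\PP{S \subseteq R} = p^{|S|}$ and to control the resulting truncated binomial sum, which is what forces $c$ to depend on $C_0$ (hence on $C, \alpha, m, \delta$) and to be chosen sufficiently large. The remaining ingredients --- verifying the hypotheses of Theorem~\ref{thm:hypergraphcontainer}, decoupling $\{ S \subseteq R \}$ from the deviation estimate for $f(S) \setminus S$, and the Chernoff bound itself --- are routine.
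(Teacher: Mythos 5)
The paper does not prove this statement; it imports it verbatim as Theorem~5.2 of Balogh--Morris--Samotij~\cite{BMS14}, so there is no ``paper's own proof'' to compare against. Your sketch is a faithful and essentially correct reconstruction of the BMS14 argument: apply the container theorem with $\mF$ the family of ``large'' vertex sets supplied by $\alpha$-density, note every container has size $<(\alpha+\delta')v$, and then union-bound over fingerprints $S$ with each term weighted by $\PP{S\subseteq R}=p^{|S|}$, using independence of the coordinates inside and outside $S$ together with a Chernoff bound to control $|f(S)\cap R|$. Your identification of the truncated binomial sum as the delicate step, and the resulting requirement that $p/q$ be a sufficiently large constant, is exactly right.

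One small caveat worth flagging: as you correctly observe, your $c$ necessarily depends on $\delta$ (through $\epsilon_0$, hence $C_0$, and through $c_1$), whereas the theorem as transcribed in this paper writes $c=c(C,\alpha,m)$ without listing $\delta$. This is a harmless imprecision in the paper's transcription---the quantifier ``for every $\delta>0$'' already precedes the existence of $c$, and in~\cite{BMS14} the hypothesis is stated as $p\gg q$, where the implicit constant may depend on $\delta$---but if you were to write this up formally you should make the dependence $c=c(C,\alpha,m,\delta)$ explicit to match what the argument actually produces.
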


We will make use of this statement in order to obtain a proof for the $1$-statement of Theorem~\ref{thm:sparsedensity}. For a proof of the $1$-statement of Theorem~\ref{thm:sparsepartition} such a ready-made statement does not exist and we will follow Nenadov and Steger's~\cite{NS14} short proof of a sparse Ramsey statement by applying Theorem~\ref{thm:hypergraphcontainer}.

%\subsection{Probability}
%
%Given a random variable $X$ we denote its expectation as $\EE{X}$ and its variance by $\Var{X}$. \emph{Markov's Inequality} states that for $X$ satisfying $\PP{X \geq 0} = 1$ and $t > 0$ we have
%%
%\begin{equation} \label{eq:markov}
%	\PP{X \geq t} \leq \frac{\EE{X}}{t}.
%\end{equation}
%%
%A straight forward consequence of Markov's Inequality is that $\lim_{n \to \infty} \PP{X_n > 0} = 0$ for a sequence of random variable $X = (X_n)_{n \in \NN}$ satisfyings $\lim_{n \to \infty} \EE{X_n} = 0$. \emph{Chebyshev's inequality} states that for a random variable $X$ whose variance exists and $t > 0$ we have
%%
%\begin{equation} \label{eq:chebyshev}
%	\PP{|X - \EE{X}| \geq t} \leq \frac{\Var{X}}{t^2}.
%\end{equation}
%%
%There are many results know as \emph{Chernoff bounds} that give exponential bounds on the tail distribution of sums of independent random variables. We will just use the simple consequence that 
%%
%\begin{equation} \label{eq:chernoff}
%	\lim_{n \to \infty} \PP{\mathcal{B}(n,p) < \frac{np}{2}} = 0.
%\end{equation}
%%
%for the binomial distribution $\mathcal{B}(n,p)$ with parameter $p = p(n)$ such that $np \to \infty$.

\section{Proof of the 1-statement in Theorem~\ref{thm:sparsedensity}} \label{sec:sparsedensity1statement}

Let $\mH_n$ be the hypergraph with vertex set $V(\mH_n) = [n]$ and edge multiset
$$E(\mH_n) = \big\{ \! \big\{ \{ x_1 , \dots , x_m \} : (x_1, \dots , x_m) \in \mS_0(A) \cap [n]^m \big\} \! \big\}.$$
Observe that $H_n$ can be a multigraph, that is multiple edges are allowed, but the multiplicity of each edge is clearly bounded by $m!$. We do this to simplify counting, since this way we have $|E(\mH_n)| = | \, \mS_0(A) \cap [n]^m|$. We observe that we can limit ourselves to proper solutions when proving the $1$-statement.

Corollary~\ref{lemma:extendeddensitysupersaturation} now states that $\mH = (\mH_n)_{n \in \NN}$ is $\pi(A)$-dense. In order to apply Theorem~\ref{thm:sparsehypergraphcontainer}, it remains to determine a sequence $q = q(n)$ satisfying the required condition. The following lemma gives us upper bounds for the maximum $\ell$-degrees in $\mH_n$.

\begin{lemma} \label{lemma:degreeupperbound}
	For $1 \leq \ell \leq m$ we have $\Delta_{\ell} (\mH_n) \leq \ell! \, m^{\ell} \max_{Q \subseteq [m], \; |Q| = \ell} n^{(m-\rank{A})-(|Q|-r_Q)}$.
\end{lemma}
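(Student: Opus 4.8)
The plan is to bound $\Delta_\ell(\mH_n)$ by fixing an $\ell$-subset $L = \{v_1, \dots, v_\ell\} \subseteq [n]$ and counting the hyperedges of $\mH_n$ containing $L$. Each such hyperedge corresponds to (the unordered support of) a proper solution $(x_1, \dots, x_m) \in \mS_0(A) \cap [n]^m$ whose entries $\{x_1, \dots, x_m\}$, as a set, include all of $L$; since the edge has multiplicity at most $m!$ and $L$ can sit inside the $m$ coordinates in at most $m^\ell$ ways (an injection from $[\ell]$ into $[m]$), the count of solutions is at most $\ell!\, m^\ell$ times the number of \emph{ordered} solutions with a \emph{designated} placement of $L$. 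So I would reduce to bounding, for each choice of column indices $Q \subseteq [m]$ with $|Q| = \ell$, the number of $\bx \in \mS_0(A) \cap [n]^m$ with $x_i = v_{\sigma(i)}$ for $i \in Q$ (some fixed bijection $\sigma : Q \to [\ell]$), and then taking the maximum over $Q$.

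First I would note that fixing the coordinates indexed by $Q$ means the remaining free coordinates, indexed by $\widebar Q$, must satisfy the inhomogeneous system $A^{\widebar Q} \cdot \by^T = \bb^T$ where $\bb$ is the (now-fixed) contribution $-\sum_{i \in Q} x_i \bc_i$ of the pinned columns. By the trivial upper bound in Equation~\eqref{eq:trivialupperbound} applied to this inhomogeneous system in $m - \ell = |\widebar Q|$ unknowns, the number of completions is at most $n^{|\widebar Q| - \rank{A^{\widebar Q}}}$. Now $|\widebar Q| = m - |Q|$ and, since we may assume $A$ has full rank, $\rank{A^{\widebar Q}} = \rank{A} - r_Q = r - r_Q$, so the exponent is $(m - |Q|) - (r - r_Q) = (m - \rank{A}) - (|Q| - r_Q)$, exactly as required. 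Taking the maximum over all $Q$ with $|Q| = \ell$ and multiplying by the combinatorial factor $\ell!\, m^\ell$ gives the stated bound.

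The main obstacle — really the only subtle point — is getting the combinatorial bookkeeping right: one must be careful that pinning an unordered set $L$ of $\ell$ vertices into an \emph{ordered} $m$-tuple legitimately costs only a factor $m^\ell$ (choosing, for each of the $\ell$ elements of $L$, which coordinate it occupies, with $m^\ell$ a crude but valid overcount that also absorbs the non-injective assignments, which contribute nothing since a proper solution has distinct entries), together with the factor $\ell!$ that was already needed to pass from edges to ordered solutions via the multiplicity bound, or alternatively the factor $m!$ for edge multiplicity absorbed into $m^\ell \cdot \ell!$ after noting $m! / (m-\ell)! \le m^\ell$. Everything else is a direct application of Equation~\eqref{eq:trivialupperbound} to the induced inhomogeneous subsystem on $\widebar Q$ and the identity $\rank{A^{\widebar Q}} = \rank{A} - r_Q$ from the definition of $r_Q$. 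No removal lemma or container machinery is needed here; this is purely an elementary counting lemma.
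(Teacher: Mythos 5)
Your proof is correct and follows essentially the same route as the paper: pin an $\ell$-set $L$, decompose the ordered solutions containing $L$ in their support according to which coordinate positions they occupy and under which bijection, and bound the number of completions by the trivial upper bound of Equation~\eqref{eq:trivialupperbound} applied to the inhomogeneous system $A^{\widebar Q}\cdot\by^T=\bb^T$, using $\rank{A^{\widebar Q}}=\rank{A}-r_Q$ to simplify the exponent. One small remark on the bookkeeping: since $\mH_n$ is built as a multihypergraph with one edge per \emph{ordered} proper solution, $\deg_{\mH_n}(L)$ already counts ordered solutions directly, so the factor $m!$ for edge multiplicity you invoke is a red herring; the only combinatorial factor needed is the number of injective placements of $L$ into $[m]$, namely $\binom{m}{\ell}\ell!\le m^\ell$, and both you and the paper arrive at the looser $\ell!\,m^\ell$ by bounding $\binom{m}{\ell}\le m^\ell$ separately from the $\ell!$ bijections.
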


\begin{proof}
For $\mH = (\mH_n)$ as defined above and $\ell \in\{ 1, \dots, m \}$ we have 
\begin{align*}
	\Delta_{\ell} (\mH_n) & \leq \max_{x_1 , \dots , x_{\ell} \in  [n]} \big| \{ \bx \in \mS_0(A) \cap [n]^m : \exists Q \subseteq [m], \pi \in S(\ell) \text{ s.t. } \bx^Q = (x_{\pi(1)}, \dots , x_{\pi(\ell)}) \} \big| \\
	& \leq \ell! \, \binom{m}{\ell} \max_{\substack{(x_1 , \dots , x_{\ell}) \in  [n]^{\ell} \\ Q \subseteq [m] , |Q| = \ell}} \big| \{ \bx \in [n]^{m-l} \mid A^{\widebar{Q}} \cdot \bx^T = -A^{Q} \cdot (x_1, \dots, x_{\ell})^T \} \big| \\
	& \leq \ell! \, m^{\ell} \max_{\substack{Q \subseteq [m] \\ |Q| = \ell}} \, \max_{\bb \in \ZZ^r} \, \big| \, \mS(A^{\widebar{Q}},\bb) \cap [n]^m \big| \leq \ell! \, m^{\ell} \max_{\substack{Q \subseteq [m] \\ |Q| = l}} n^{|\widebar{Q}| - \rank{A^{\widebar{Q}}}} \\
	& = \ell! \, m^{\ell} \max_{\substack{Q \subseteq [m] \\ |Q| = l}} n^{(m-\rank{A})-(|Q|-r_Q)}
\end{align*}
where $S(\ell)$ denotes the set of permutations of $\ell$ elements. We have also made extensive use of the notation defined in the introduction as well as as the trivial upper bound for the number of solutions stated in Equation~\eqref{eq:trivialupperbound}.
\end{proof}
	
Note that $r_Q = 0$ for any $Q \subseteq [m]$ satisfying $|Q| = 1$ due to Lemma~\ref{lemma:welldefined} and that there exists $c_0 = c_0(A) > 0$ such that $e(\mH_n) \geq c_0 \, n^{m-\rank{A}}$ due to Lemma~\ref{lemma:countingsolutions}. Using Lemma~\ref{lemma:degreeupperbound} we now observe that
\begin{align*}
	\Delta_1(\mH_n) & \leq m \, n^{m-\rank{A}-1} \leq m/c_0 \; \frac{e(\mH_n)}{v(\mH_n)}.
\end{align*}
For $\ell \in\{ 2, \dots , m \}$ we again apply Lemma~\ref{lemma:degreeupperbound} to see that 
\begin{align*}
	\Delta_{\ell}(\mH_n) & \leq \ell! \, m^{\ell} \max_{Q \subseteq [m], \; |Q| = \ell} n^{(m-\rank{A})-(|Q|-r_Q)} = \ell! \, m^{\ell} \, \Big( \max_{Q \subseteq [m], \; |Q| = \ell} n^{-\frac{|Q|-r_Q-1}{|Q|-1}} \Big)^{\ell-1} \; n^{m-\rank{A}-1} \\
	& \leq \ell! \, m^{\ell} \, \big( n^{-1/m_1(A)} \big)^{\ell-1} \; n^{m-\rank{A}-1} \leq (\ell! \, m^{\ell})/c_0 \, \big( n^{-1/m_1(A)} \big)^{\ell-1} \: \frac{e(\mH_n)}{v(\mH_n)}.
\end{align*}
Lastly we observe that $n^{-1/m_1(A)} \, v(\mH_n) = n^{1-1/m_1(A)} \to \infty$ as $m_1(A) > 1$. It follows that the prerequisites of Theorem~\ref{thm:sparsehypergraphcontainer} hold for $C = (m! \, m^m)/c_0$, $q = q(n) = n^{-1/m_1(A)}$ and we can choose the $c = c(A,\epsilon)$ in Theorem~\ref{thm:sparsedensity} to be equal to the $c = c(C,\pi(A),m)$ as given by Theorem~\ref{thm:sparsehypergraphcontainer}.

\section{A Short Proof of the 1-statement in Theorem~\ref{thm:sparsepartition}} \label{sec:sparsepartition1statement}

As stated in the introduction, this result was previously proven by Friedgut, Rödl and Schacht~\cite{FRS10} as well as independently Conlon and Gowers~\cite{CG10}. This prove merely serves as a short version that follows the short proof of a sparse Ramsey result due to Nenadov and Steger~\cite{NS14}.

\medskip

We will need two ingredients in order to prove the $1$-statement of Theorem~\ref{thm:sparsepartition}. The first will be the following easy corollary to Lemma~\ref{lemma:partitionsupersaturation}.

\begin{corollary} \label{cor:partitionsupersaturationcorollary}
	For a given partition regular matrix $A \in \mM_{r \times m}(\ZZ)$ and $s \in \NN$ there exist $\epsilon = \epsilon(A,s)$ and $\delta = \delta(A,s) > 0$ such that for any $T_1, \dots, T_s \subseteq [n]$ satisfying $|\mS_0(A) \cap T_i^m| \leq \epsilon \, |\mS_0(A) \cap [n]^m|$ for $1 \leq i \leq s$ we have $\big| [n] \backslash (T_1 \cup \dots \cup T_s) \big| \geq \delta n$ for $n$ large enough.
\end{corollary}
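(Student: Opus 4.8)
The plan is to argue by contraposition, using Lemma~\ref{lemma:partitionsupersaturation} as the core engine. Suppose $T_1, \dots, T_s \subseteq [n]$ are such that the complement $R = [n] \setminus (T_1 \cup \dots \cup T_s)$ is small, say $|R| < \delta n$ for a $\delta$ to be chosen. I want to conclude that at least one $T_i$ must capture a $\geq \epsilon$ proportion of the proper solutions. The obstacle is that $T_1, \dots, T_s$ together with $R$ do not form a partition of $[n]$ in general: the $T_i$ may overlap, and they need not be disjoint from $R$. So the first step is to extract from them a genuine partition of $[n]$ to which Lemma~\ref{lemma:partitionsupersaturation} applies.

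The cleanest way to do this is to set $T_i' = T_i \setminus (T_1 \cup \dots \cup T_{i-1})$ for $1 \leq i \leq s$, so that $T_1', \dots, T_s'$ are pairwise disjoint with $T_1' \cup \dots \cup T_s' = T_1 \cup \dots \cup T_s$, and then let $T_1'' = T_1' \cup R$ and $T_i'' = T_i'$ for $i \geq 2$; now $[n] = T_1'' \, \dot\cup \, \dots \, \dot\cup \, T_s''$ is a genuine partition into $s$ parts. Applying Lemma~\ref{lemma:partitionsupersaturation} with this partition gives a constant $\zeta = \zeta(A,s) > 0$ such that $\sum_{i=1}^s |\mS_0(A) \cap (T_i'')^m| \geq \zeta \, |\mS_0(A) \cap [n]^m|$ for $n$ large enough, so some part $T_j''$ satisfies $|\mS_0(A) \cap (T_j'')^m| \geq (\zeta/s) \, |\mS_0(A) \cap [n]^m|$. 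Since $T_j'' \subseteq T_j \cup R$, every solution counted here lies in $(T_j \cup R)^m$.

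It remains to remove the contribution of $R$. A proper solution in $(T_j \cup R)^m$ that is not entirely in $T_j^m$ uses at least one coordinate from $R$; fixing that coordinate to an element of $R$ and invoking the degree bound from Lemma~\ref{lemma:degreeupperbound} (the $\ell = 1$ case, which gives $\Delta_1(\mH_n) \leq m \, n^{m - \rank{A} - 1}$), the number of such solutions is at most $|R| \cdot m \, n^{m-\rank{A}-1} < \delta m \, n^{m-\rank{A}}$. Comparing this with the lower bound $|\mS_0(A) \cap [n]^m| \geq c_0 \, n^{m-\rank{A}}$ from Lemma~\ref{lemma:countingsolutions}, we see that if $\delta$ is chosen small enough — concretely $\delta < (\zeta c_0)/(2 s m)$ — then the $R$-using solutions account for less than half of the $(\zeta/s)$-fraction, so $|\mS_0(A) \cap T_j^m| \geq (\zeta/(2s)) \, |\mS_0(A) \cap [n]^m|$. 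Setting $\epsilon = \zeta/(2s)$ and taking this $\delta$, the contrapositive is established: if every $T_i$ has $|\mS_0(A) \cap T_i^m| \leq \epsilon \, |\mS_0(A) \cap [n]^m|$, then $|R| \geq \delta n$. The main thing to be careful about is the bookkeeping in the disjointification step and making sure the constants $\epsilon$ and $\delta$ depend only on $A$ and $s$, which they do since $\zeta$, $c_0$, and $m$ all do.
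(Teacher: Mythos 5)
Your proof is correct and relies on the same three tools as the paper's: Lemma~\ref{lemma:partitionsupersaturation} applied after disjointifying the $T_i$, the degree bound $\Delta_1(\mH_n) \leq m\, n^{m-\rank{A}-1}$ from Lemma~\ref{lemma:degreeupperbound}, and the lower bound $|\,\mS_0(A)\cap[n]^m| \geq c_0\, n^{m-\rank{A}}$ from Lemma~\ref{lemma:countingsolutions}. The decomposition, however, is slightly different. You fold the complement $R = [n]\setminus(T_1 \cup \dots \cup T_s)$ into the first disjointified part, apply the supersaturation lemma to an $s$-part partition with $\zeta = \zeta(A,s)$, and then subtract the $R$-using solutions at the end. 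The paper instead keeps $R$ as its own $(s+1)$-th part $\tilde{T}_{s+1}$, applies Lemma~\ref{lemma:partitionsupersaturation} with $\zeta = \zeta(A,s+1)$, and reads off directly from the hypothesis (each $\tilde{T}_i \subseteq T_i$ for $i \leq s$ contributes at most a $\zeta/(2s)$-fraction) that $\tilde{T}_{s+1} = R$ must carry at least a $\zeta/2$-fraction of the proper solutions, after which the degree bound immediately gives $|R| \geq (\zeta c_0/(2m))\, n$. Leaving $R$ as its own colour avoids your subtraction step and yields a marginally less wasteful $\delta$ (no extra factor of $s$), at the harmless cost of invoking the supersaturation lemma for $s+1$ rather than $s$ colours. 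Both arguments are of essentially the same length and depend on the same constants; your contrapositive framing is fine, but once $R$ is given its own part the argument runs just as easily in the forward direction.
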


\begin{proof}
	Let $\zeta = \zeta(A,s+1)$ be as in Lemma~\ref{lemma:partitionsupersaturation} and $\epsilon = \epsilon(A,s) = \zeta/2s$. Set $\tilde{T}_i = T_i \backslash \bigcup_{j=1}^{i-1} T_j$ for $1 \leq i \leq s$ and $\tilde{T}_{s+1} = [n] \backslash \bigcup_{j=1}^{r} T_j$ and consider the partition $[n] = \tilde{T}_1 \, \dot{\cup} \, \dots \, \dot{\cup} \, \tilde{T}_s \, \dot{\cup} \, \tilde{T}_{s+1}$. By Lemma~\ref{lemma:partitionsupersaturation} we have $|\mS_0(A) \cap \tilde{T}_1^m| + \dots + |\mS_0(A) \cap \tilde{T}_{r+1}^m| \geq \zeta \, |\mS_0(A) \cap [n]^m|$ and since by assumption $|\mS_0(A) \cap \tilde{T}_i^m| \leq |\mS_0(A) \cap T_i^m| \leq \zeta/2s \, |\mS_0(A) \cap [n]^m|$ for all $i \in \{1, \dots, s \}$, we have $|\mS_0(A) \cap \left( [n] \backslash (T_1 \cup \dots \cup T_s) \right)^m| \geq \zeta/2 \, |\mS_0(A) \cap [n]^m|$.  Observe that by Lemma~\ref{lemma:degreeupperbound} every element in $[n]$ is contained in at most $m \, n^{m-\rank{A}-1}$ solutions and by Lemma~\ref{lemma:countingsolutions} there exists $c_0 = c_0(A) > 0$ such that $| \, \mS_0(A) \cap [n]^m | \geq c_0 \, n^{m-\rank{A}}$ for $n$ large enough, so that the results follows for $\delta = \zeta c_0/2$.
\end{proof}

The second ingredient is stated in the following corollary that is obtained by applying the Hypergraph Container Theorem to the hyperpgraph of solutions. 

\begin{corollary} \label{cor:containercorollary}
	For a given partition regular matrix $A \in \mM_{r \times m}(\ZZ)$ and $\epsilon > 0$ there exist $t = t(n)$ sets $T_1, \dots , T_t \in \binom{[n]}{\leq c_0 \, n^{1-1/m_1(A)}}$ for some $c_0 > 0$ as well as sets $C_1, \dots, C_t \subseteq [n]$ such that
	\begin{equation}
		|\mS_0(A) \cap C_i^m| \leq \epsilon \, |\mS_0(A) \cap [n]^m|.
	\end{equation}
	Furthermore, for every set $T \subseteq [n]$ satisfying $\mS_0(A) \cap T^m = \emptyset$ there exists $1 \leq i \leq t$ such that 
	\begin{equation}
		T_i \subseteq T \subseteq C_i.
	\end{equation}
\end{corollary}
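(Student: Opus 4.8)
The plan is to apply the Hypergraph Container Theorem (Theorem~\ref{thm:hypergraphcontainer}) to the hypergraph $\mH_n$ whose vertex set is $[n]$ and whose edge multiset consists of the (unordered images of the) proper solutions $\mS_0(A) \cap [n]^m$, exactly as set up in Section~\ref{sec:sparsedensity1statement}. First I would specify the increasing family $\mF$: take $\mF = \{ A \subseteq [n] : |A| \geq \epsilon' n \}$ for a suitable $\epsilon' = \epsilon'(A,\epsilon) > 0$. The key input is that $\mH = (\mH_n)$ is $\pi(A)$-dense by Lemma~\ref{lemma:extendeddensitysupersaturation} (Supersaturation); this means that for a fixed $\delta > 0$ there is $\epsilon'' > 0$ so that any vertex subset of density exceeding $\pi(A)+\delta$ spans at least $\epsilon'' e(\mH_n)$ edges. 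Choosing $\epsilon' > \pi(A)$ and $\epsilon_{\mathrm{dense}} = \min(\epsilon', \epsilon'')$ makes $\mH_n$ genuinely $(\mF, \epsilon_{\mathrm{dense}})$-dense and also guarantees $|A| \geq \epsilon_{\mathrm{dense}} v(\mH_n)$ for all $A \in \mF$, so the structural hypotheses of Theorem~\ref{thm:hypergraphcontainer} are met.

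Next I would verify the degree conditions. By Lemma~\ref{lemma:degreeupperbound} together with Lemma~\ref{lemma:countingsolutions} (which gives $e(\mH_n) \geq c_0 n^{m-\rank{A}}$) and Lemma~\ref{lemma:welldefined} (which gives $r_Q = 0$ when $|Q|=1$), we have, just as in Section~\ref{sec:sparsedensity1statement}, that $\Delta_\ell(\mH_n) \leq c\, p^{\ell-1}\, e(\mH_n)/v(\mH_n)$ for $p = n^{-1/m_1(A)}$, every $\ell \in \{1,\dots,m\}$, and a constant $c = c(A)$; here one uses that $\frac{|Q|-r_Q-1}{|Q|-1} \geq 1/m_1(A)$ for $2 \leq |Q|$ and that $m_1(A) > 1$. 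Feeding $m$, this $c$, and $\epsilon_{\mathrm{dense}}$ into Theorem~\ref{thm:hypergraphcontainer} yields a constant $C = C(A,\epsilon)$, a family $\mT \subseteq \binom{[n]}{\leq C p\, v(\mH_n)} = \binom{[n]}{\leq C\, n^{1-1/m_1(A)}}$, and functions $f : \mT \to \widebar{\mF}$ and $g : \mI(\mH_n) \to \mT$ such that $g(I) \subseteq I$ and $I \setminus g(I) \subseteq f(g(I))$ for every independent set $I$.

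To finish, I would enumerate $\mT = \{ T_1, \dots, T_t \}$ and set $C_i = T_i \cup f(T_i)$. Each $f(T_i) \in \widebar{\mF}$ has density at most $\epsilon'$, i.e.\ $|f(T_i)| \leq (\pi(A)+\delta')n$ for the corresponding $\delta'$; since $|T_i|$ is lower order, $C_i$ still has density below $\epsilon'$ for $n$ large, so by the $\pi(A)$-density statement run in the contrapositive (a set of density $< \pi(A)+\delta$ spans few edges — but we want an upper bound on solutions, which actually follows more directly from Lemma~\ref{lemma:degreeupperbound}/the trivial bound applied to the low-density set), we get $|\mS_0(A) \cap C_i^m| \leq \epsilon\, |\mS_0(A) \cap [n]^m|$ after calibrating $\epsilon_{\mathrm{dense}}$ to the target $\epsilon$. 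Finally, any $T$ with $\mS_0(A) \cap T^m = \emptyset$ is an independent set in $\mH_n$, so with $i$ the index of $g(T) \in \mT$ we have $T_i = g(T) \subseteq T$ and $T \setminus T_i \subseteq f(T_i)$, hence $T \subseteq T_i \cup f(T_i) = C_i$, giving $T_i \subseteq T \subseteq C_i$ as required. The main obstacle is bookkeeping the chain of constants so that the density bound on the containers translates into the exact factor $\epsilon$ demanded in the statement — in particular choosing $\delta$ small enough in the $\pi(A)$-density hypothesis and then $\epsilon_{\mathrm{dense}}$ appropriately — but this is routine once the density and degree conditions are in place; no genuinely new idea is needed beyond the template already used for Theorem~\ref{thm:sparsedensity}.
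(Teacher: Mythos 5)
Your overall template — apply Theorem~\ref{thm:hypergraphcontainer} to the hypergraph of proper solutions, read the independent sets as solution-free sets, enumerate the fingerprint family $\mT$ as $T_1,\dots,T_t$ and set $C_i = T_i \cup f(T_i)$ — matches the paper. The gap is in your choice of $\mF$ and the last step. You take $\mF = \{ A \subseteq [n] : |A| \geq \epsilon' n\}$, so the containers $f(T_i) \in \widebar{\mF}$ are merely sets of \emph{size} less than $\epsilon' n$. From this you try to deduce $|\mS_0(A) \cap C_i^m| \leq \epsilon\,|\mS_0(A) \cap [n]^m|$. That deduction fails: a set of density below $\epsilon'$ can still contain a constant fraction of all proper solutions. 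Lemma~\ref{lemma:extendeddensitysupersaturation} only gives a \emph{lower} bound on solutions in dense sets; there is no matching upper bound for sets of small density, and the contrapositive of the $\pi(A)$-density condition says ``few edges implies low density,'' not the converse. Your fallback invocation of Lemma~\ref{lemma:degreeupperbound} and the trivial bound \eqref{eq:trivialupperbound} does not help either — those give $|\mS_0(A) \cap C_i^m| \leq n^{m-\rank{A}}$, i.e.\ a constant multiple of $|\mS_0(A)\cap[n]^m|$, not an $\epsilon$ fraction. Concretely, for $A = (1\;1\;-1)$ one has $\pi(A)=1/2$, and the interval $[1,n/2]$ has density $1/2 < \epsilon'$ yet contains $\Theta(n^2) = \Theta(|\mS_0(A)\cap[n]^2|)$ Schur triples, so your Corollary's first displayed inequality would be false for these containers.

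The paper sidesteps this entirely by choosing $\mF = \{ T \subseteq [n] : |\mS_0(A) \cap T^m| \geq \epsilon\,|\mS_0(A)\cap[n]^m| \}$. This family is increasing, and $\mH_n$ is $(\mF,\epsilon)$-dense for this $\mF$ by definition, so no supersaturation lemma is needed at all here. The conclusion you are after — that each container lies in $\widebar{\mF}$ and hence has at most an $\epsilon$ fraction of the solutions — then comes for free from Theorem~\ref{thm:hypergraphcontainer}. The one hypothesis you should still check with this $\mF$ is that every $T \in \mF$ satisfies $|T| \geq \epsilon_0\,v(\mH_n)$ for some $\epsilon_0 > 0$: this follows from Lemma~\ref{lemma:degreeupperbound} (each vertex lies in at most $m\,n^{m-\rank{A}-1}$ edges) together with Lemma~\ref{lemma:countingsolutions} (there are at least $c_0 n^{m-\rank{A}}$ edges), which forces $|T| \geq \epsilon c_0 n/m$. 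In short: your degree-condition verification and your assembly of the $C_i$ are fine, but $\mF$ must encode ``many solutions,'' not ``high density,'' and once it does, the supersaturation lemma plays no role in this corollary.
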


\begin{proof}
	Let $\mH_n$ again be the hypergraph with vertex set $V(\mH_n) = [n]$ and edge multiset
	$$E(\mH_n) = \big\{ \! \big\{ \{ x_1 , \dots , x_m \} : (x_1, \dots , x_m) \in \mS_0(A) \cap [n]^m \big\} \! \big\}.$$
	We have previously observed that there exists a $c > 0$ such that $\Delta_{\ell} \leq c \, p(n)^{\ell-1} \, e(\mH_n)/v(\mH_n)$ for $p = p(n) = n^{-1/m_1(A)}$. We observe that $\mH_n$ is trivially $(\mF,\epsilon)$-dense for $\mF = \{ T \subseteq [n] : |\, \mS_0(A) \cap T^m| \geq \epsilon \, |\, \mS_0(A) \cap [n]^m| \}$. Applying Theorem~\ref{thm:hypergraphcontainer} gives the desired statement.
\end{proof}

We are now ready to give a short proof of the $1$-statement in Theorem~\ref{thm:sparsepartition} following the ideas of Nenadov and Steger~\cite{NS14}. Let $\epsilon, \delta > 0$ be as in Corollary~\ref{cor:partitionsupersaturationcorollary} and let $t = t(n)$, $c_0$, $S_1, \dots, S_t$ and $C_1, \dots, C_t$ be as in Corollary~\ref{cor:containercorollary}. Let $C = C(A,s) \geq 2 s c_0 / \delta$ be constant.

Observe now that for a partition of the random set $T_1 \dot{\cup} \dots \dot{\cup} \, T_s = [n]_p$ satisfying $\mS_0(A) \cap T_i^m = \emptyset$ for all $i \in \{1, \dots, s\}$ there exist $j_1, \dots, j_s \in \{1, \dots, t\}$ so that $S_{j_i} \subseteq T_i \subseteq C_{j_i}$ for all $i \in \{1, \dots, s\}$. Since $T_i \subseteq [n]_p$ for $1 \leq i \leq s$ and $[n] \backslash (C_1 \cup \dots \cup \, C_s) \cap [n]_p = \emptyset$ we can bound the probability of $[n]_p$ not fulfilling the partition property by
\begin{align*}
\PP{[n]_p \not\to_s A} \leq \! \! \! \! \! \! \! \sum_{j_1, \dots, j_s \in \{1, \dots, t\}}  \! \! \! \! \! \! \! \PP{S_{j_1}, \dots, S_{j_s} \subseteq [n]_p \: \wedge \: [n] \backslash (C_{j_1}, \dots, C_{j_s}) \cap [n]_p = \emptyset }.
\end{align*}
Observe that the two events $S_{j_1}, \dots, S_{j_s} \subseteq [n]_p$ and $[n] \backslash (C_{j_1}, \dots, C_{j_s}) \cap [n]_p = \emptyset$ are independent, so that we have
\begin{align*}
\PP{[n]_p \not\to_s A} \leq  \! \! \! \! \! \! \! \sum_{j_1, \dots, j_s \in \{1, \dots, t\}}  \! \! \! \! \! \! \! p^{\, \left| \bigcup_{j=1}^s S_j \right|} \, (1-p)^{\, \left| [n] \backslash (C_{j_1}, \dots, C_{j_s}) \right|}.
\end{align*}
We bound this by choosing $k = | \bigcup_{j=1}^s S_j | \leq s c_0 n^{1-1/m_1(A)}$, then picking $k$ elements and lastly deciding for each element in this selection in which of the $S_i$ it is contained, so that we have 
\begin{align*}
\PP{[n]_p \not\to_s A} \leq (1-p)^{\delta n} \, \! \! \sum_{k = 0}^{s c_0 n^{1-1/m_1(A)}} \! \!\binom{n}{k} \left( 2^{s} \right)^k p^k & \leq e^{-\delta n p} \, \! \! \sum_{k = 0}^{s c_0 C^{-1} \, np} \! \! \left( \frac{e 2^s \, np}{k}  \right)^k.
\end{align*}
Lastly we note that for $c > 0$ the function $f(x) = (c/x)^x$ is increasing for $0 \leq x \leq c/e$ since $d/dx \, f(x) = (c/x)^x \left( \log(c/x) - 1 \right)$. We have chosen $C$ large enough so that for $n$ large enough we have 
\begin{align*}
\PP{[n]_p \not\to_s A} \leq e^{-\delta n p} \, (s c_0 C^{-1} \, np + 1) \left( \frac{e 2^{s c_0 C^{-1} \, np}}{s c_0 C^{-1}}  \right)^k \leq e^{-\delta n p} \; e^{\delta n p/2} = o(1).
\end{align*}
As desired it follows that $[n]_p \to_s A$ asymptotically almost surely for $p \geq C \, n^{-1/m_1(A)}$.

\section{Proof of Theorem~\ref{thm:sparsepartitionnontrivial} -- A Rado-type 0-statement} \label{sec:sparsepartition0statement}

Let $Q \subseteq [m]$ be a set of column indices satisfying $|Q| \geq 2$ such that $(|Q|-1)/(|Q|-r_{Q}-1) = m_1(A)$. Due to Lemma~\ref{lemma:nosolutions} we can replace $A$ with $\Sub{A}{Q}$ if necessary in order to guarantee that $(m-1)/(m-\rank{A}-1) = m_1(A)$. Due to Lemma~\ref{lemma:nontrivial} we know that
\begin{equation} \label{eq:splittinguppartition}
	\PP{[n]_p \to_s^{\star} A} \leq \PP{ \, \bigcup_{\fp \in \fP(A)} \Big( [n]_p \to_s A_{\fp} \Big) } \leq \sum_{\fp \in \fP(A)} \PP{ [n]_p \to_s A_{\fp} }.
\end{equation}
Let us bound the individual probabilities $\PP{ [n]_p \to_s A_{\fp} }$ for each $\fp \in \fP(A)$. For $|\fp| = m$, that is $\fp = \{\{1\}, \dots, \{m\}\}$, we know due to Rödl and Ruciński's Theorem~\ref{thm:sparsepartition} that there exists a $c = c(A,s)$ such that $\lim_{n \to \infty} \PP{[n]_p \to_s A} = 0$ for $p = p(n) \leq c \, n^{-1/m_1(A)}$. For $|\fp| < m$ we consider two separate cases. If $A_{\fp}$ is not partition regular, then $[n] \not\to_s A$ and therefore trivially $\lim_{n \to \infty} \PP{ [n]_p \to_s A_{\fp} } = 0$. If $A_{\fp}$ is partition regular, then 
\begin{equation*}
	m_1(A_{\fp}) \geq \frac{|\fp|-1}{|\fp|-\rank{A}-1} > \frac{m-1}{m-\rank{A}-1} = m_1(A)	
\end{equation*}
so that $n^{-1/m_1(A)} = o \big( n^{-1/m_1(A_{\fp})} \big)$ and therefore again by Theorem~\ref{thm:sparsepartition} we have $\lim_{n \to \infty} \PP{[n]_p \to_s A_{\fp}} = 0$ for $p = p(n) \leq c \, n^{-1/m_1(A)}$. The desired statement follows due to Equation~\eqref{eq:splittinguppartition}.

\section{Proof of Theorem~\ref{thm:sparsedensitynontrivial} -- A Szémeredi-type 0-statement} \label{sec:sparsedensity0statement}

Due to Lemma~\ref{lemma:nontrivial} we know that
\begin{equation} \label{eq:splittingupdensity}
	\PP{[n]_p \to_{\epsilon}^{\star} A} \leq \PP{ \, \bigcup_{\fp \in \fP(A)} \Big( [n]_p \to_{\epsilon} A_{\fp} \Big) } \leq \sum_{\fp \in \fP(A)} \PP{ [n]_p \to_{\epsilon} A_{\fp} }.
\end{equation}
We will therefore analyze the individual probabilities $\PP{ [n]_p \to_{\epsilon} A_{\fp} }$ for each $\fp \in \fP(A)$. The constant $c = c(A,\epsilon)$ will be define later in Equation~\eqref{eq:smallc}. We start by first stating the following proposition, which restricts the statement of Theorem~\ref{thm:sparsedensitynontrivial} to proper solutions. Its proof will be given at the end of this section.
\begin{proposition} \label{prop:densityproper0}
	For every $\epsilon > 0$, $r,m \in \NN$ such that $m \geq 3$ and matrix $A \in \mM_{r \times m}(\ZZ)$ there exists a constant $c = c(A,\epsilon)$ such that the following holds. If $A$ is irredundant, positive and abundant, then we have $\lim_{n \to \infty} \PP{[n]_{p} \to_{\epsilon} A} = 0$ if $p(n) \leq c \, n^{-1/m_1(A)}$.
\end{proposition}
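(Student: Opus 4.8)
The plan is a first/second‑moment plus alteration argument, preceded by a reduction to a ``critical'' subsystem so that at the threshold the expected number of proper solutions inside $[n]_p$ is only of order $|[n]_p|$, which is what makes the deletion step feasible.

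I would first reduce to the case where the maximum in \eqref{eq:max1density} is attained by $Q=[m]$. One may assume $A\ne 0$ (equivalently $m_1(A)>1$; the zero matrix is a degenerate case), and then any maximizing $Q$ has $r_Q\ge1$ and hence $|Q|\ge3$ by Lemma~\ref{lemma:welldefined}. If such a $Q$ is a proper subset of $[m]$, I replace $A$ by $\Sub{A}{Q}$: by Remark~\ref{rmk:subsystemobservations} it is again irredundant, positive, abundant and of full rank $r_Q$; by the block form \eqref{eq:subsystemillustration} every proper solution of $A$ inside a set $T$ restricts to a proper solution of $\Sub{A}{Q}$ inside $T$, so $[n]_p\to_\epsilon A$ implies $[n]_p\to_\epsilon\Sub{A}{Q}$; and $m_1(\Sub{A}{Q})\ge m_1(A)$, since $(|Q|-1)/(|Q|-r_Q-1)=m_1(A)$ is one of the ratios defining $m_1(\Sub{A}{Q})$. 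Because $c\,n^{-1/m_1(A)}\le c\,n^{-1/m_1(\Sub{A}{Q})}$, it then suffices to prove the proposition for $\Sub{A}{Q}$; iterating (the number of columns strictly drops each time) we reach a matrix for which $Q=[m]$ is a maximizer. From now on I therefore assume $m_1(A)=(m-1)/(m-\rank{A}-1)$, so that $1<m_1(A)\le m-1$.

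Next I would work at $p=c\,n^{-1/m_1(A)}$ — for smaller $p(n)$ one transfers the solution‑free set constructed below along the coupling $[n]_{p(n)}\subseteq[n]_{c\,n^{-1/m_1(A)}}$ when $n\,p(n)\to\infty$, and invokes the first moment below when $n\,p(n)$ stays bounded, since then $\EE{X}\to0$ and $X=0$ asymptotically almost surely. Let $X$ count the $\bx\in\mS_0(A)\cap[n]^m$ all of whose entries lie in $[n]_p$. As proper solutions have $m$ distinct entries, $\EE{X}=|\mS_0(A)\cap[n]^m|\,p^m$, so \eqref{eq:trivialupperbound} and Lemma~\ref{lemma:countingsolutions} (applicable since $A$ positive and irredundant give $\mS_0(A)\cap\NN^m\ne\emptyset$) yield $c_0\,n^{m-\rank{A}}p^m\le\EE{X}\le n^{m-\rank{A}}p^m$; using $m_1(A)=(m-1)/(m-\rank{A}-1)$ this becomes $\EE{X}=\Theta(n^{1-1/m_1(A)})$ with $\EE{X}\le c^{m-1}\,np$. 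For concentration I would bound $\Var{X}=\sum_{\bx,\by}\mathrm{Cov}\big(\Ind{\bx\subseteq[n]_p},\Ind{\by\subseteq[n]_p}\big)$: this covariance is $0$ for pairs with disjoint entry sets and at most $p^{2m-\ell}$ when $\bx$ and $\by$ share exactly $\ell\ge1$ entries; moreover, fixing $\bx$ and the subset $S$ of columns of $\by$ whose entries repeat entries of $\bx$, there are $O(1)$ choices for those repeated values and, by \eqref{eq:trivialupperbound}, $O\big(n^{(m-\rank{A})-(|S|-r_S)}\big)$ ways to complete the remaining columns of $\by$, so the number of pairs of this overlap ``type $S$'' is $O\big(n^{m-\rank{A}}\cdot n^{(m-\rank{A})-(|S|-r_S)}\big)$. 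Dividing, the contribution of each type $S$ to $\Var{X}/\EE{X}^2$ is $O\big(c^{-|S|}\,n^{\,|S|/m_1(A)-(|S|-r_S)}\big)$.

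The hard part is to show these exponents are all negative, that is $m_1(A)>|S|/(|S|-r_S)$ for every $\emptyset\ne S\subseteq[m]$; this is exactly where abundance is used. If $|S|\le2$, Lemma~\ref{lemma:welldefined} forces $r_S=0$, so $|S|/(|S|-r_S)=1<m_1(A)$; the same holds if $|S|\ge3$ and $r_S=0$; and if $|S|\ge3$ and $r_S\ge1$, then $|S|-r_S-1>0$ (again by Lemma~\ref{lemma:welldefined}) and $(|S|-1)(|S|-r_S)-|S|(|S|-r_S-1)=r_S>0$, so $m_1(A)\ge(|S|-1)/(|S|-r_S-1)>|S|/(|S|-r_S)$. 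Summing the finitely many types gives $\Var{X}=o(\EE{X}^2)$, so by Chebyshev $X<2\,\EE{X}\le2c^{m-1}\,np$ asymptotically almost surely. Finally I fix $c=c(A,\epsilon)$ small enough that $1-4c^{m-1}\ge\epsilon$; since $np=c\,n^{1-1/m_1(A)}\to\infty$, also $|[n]_p|\ge np/2$ asymptotically almost surely by a Chernoff bound, and on the intersection of these two good events, deleting one element from each of the at most $X$ proper solutions in $[n]_p$ leaves a subset of $[n]_p$ with no proper solution and of size at least $|[n]_p|-X\ge(1-4c^{m-1})|[n]_p|\ge\epsilon|[n]_p|$. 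Therefore $\PP{[n]_p\to_\epsilon A}\to0$, as claimed.
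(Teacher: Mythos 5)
Your proof is correct and follows the same broad outline as the paper: reduce to a ``critical'' subsystem $\Sub{A}{Q}$ so that $m_1$ is realized on $Q=[m]$, estimate the first and second moments of $X=|\mS_0(A)\cap[n]_p^m|$, apply Chebyshev, and delete one element per solution. The organizational difference is in how you cover the full range $p\le c\,n^{-1/m_1(A)}$: the paper splits into three regimes (first moment on a maximum-density subsystem when $p\ll n^{-1/m(A)}$; first moment plus Markov when $n^{-1}\ll p\ll n^{-1/m_1(A)}$; second moment only near $p\asymp n^{-1/m_1(A)}$), whereas you run the second-moment argument only at the threshold $p'=c\,n^{-1/m_1(A)}$ and then transfer to smaller $p$ by the thinning coupling $[n]_p\subseteq[n]_{p'}$ when $np\to\infty$, and by Markov when $np=O(1)$. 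The coupling route is slightly more economical (it replaces the paper's Case~2 by a one-line observation), but it needs a small repair: you must choose $c$ so that $1-4c^{m-1}$ \emph{strictly} exceeds $\epsilon$, since under the thinning you only get that $T\cap[n]_p$ has density $(1-4c^{m-1})(1-o(1))$ in $[n]_p$; with equality the conclusion is not a.a.s. One more remark in your favour: your second-moment estimate bounds, for overlap type $S$, the number of completions of $\by$ by $n^{(m-\rank{A})-(|S|-r_S)}=n^{|\bar S|-\rank{A^{\bar S}}}$, using the rank of the actual column submatrix $A^{\bar S}$, which is the correct count. The paper's printed bound uses $n^{|Q|-r_Q}=n^{|Q|-\rank{\Sub{A}{Q}}}$ instead, which is weaker since $\rank{A^Q}\ge r_Q$, and is in fact not strong enough as written (e.g.\ for $A=(1\ 1\ -2)$ and $|\bar Q|=1$ the paper's ratio does not tend to $0$ at $p=c\,n^{-1/2}$); your version fixes this, so on this point your computation is the more careful of the two.
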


For $|\fp| < m$ we now observe that $A_{\fp}$ again clearly is irredundant and positive since $\fp$ indicates the repeated entries of an actual solution in $\mS_1(A)$. If $A_{\fp}$ is not abundant, then Proposition~\ref{prop:trivialcases0statement} states that $\lim_{n \to \infty} \PP{ [n]_p \to_{\epsilon} A_{\fp} } = 0$ for $p = p(n) \leq c \, n^{-1/m_1(A)} = o(1)$ independent of the constant $c$. If $A_{\fp}$ is abundant, then we can apply Proposition~\ref{prop:densityproper0} to it. If we assume as in the proof of Theorem~\ref{thm:sparsepartitionnontrivial} that $m_1(A) = (m-1)/(m-\rank{A}-1)$, then we again have $n^{-1/m_1(A)} = o \big( n^{-1/m_1(A_{\fp})} \big)$ and therefore $\lim_{n \to \infty} \PP{[n]_p \to_s A_{\fp}} = 0$ for $p = p(n) \leq c \, n^{-1/m_1(A)}$ independent of $c$. Lastly, let $|\fp| = m$, that is $\fp = \{\{1\},\dots,\{m\}\}$ and therefore $A_{\fp} = A$. Proposition~\ref{prop:densityproper0} applies to $A$ and therefore we obtain the desired statement with $c = c(A,\epsilon)$ as given by Proposition~\ref{prop:densityproper0}. The desired statement now follows due to Equation~\eqref{eq:splittingupdensity}.

\medskip

\begin{proof}[Proof of Proposition~\ref{prop:densityproper0}]
	Observe that the expected number of elements in $[n]_p$ is
\begin{equation} \label{eq:nrofelements}
	\EE{|[n]_p|} = np.
\end{equation}
We also note that due to Lemma~\ref{lemma:countingsolutions} there exists $c_0 = c_0(\Sub{A}{Q})$ such that 
\begin{align} \label{eq:nrofsolutions_lowerbound}
	\EE{\big| \, \mS_0(\Sub{A}{Q}) \cap [n]_p^{|Q|} \big|} \geq c_0 \, n^{|Q| - r_Q} \, p^{|Q|} \quad \text{for} \quad \emptyset \neq Q \subseteq [m]
\end{align}
and due to Equation~\eqref{eq:trivialupperbound} we also have for $\bb \in \ZZ^r$ and $\emptyset \neq Q \subseteq [m]$ that 
\begin{align} \label{eq:nrofsolutions_upperbound}
	\EE{\big| \, \mS_0(\Sub{A}{Q},\bb) \cap [n]_p^{|Q|} \big|} & \leq n^{|Q| - r_Q} \, p^{|Q|} \quad \text{for} \quad \emptyset \neq Q \subseteq [m].
\end{align}

\medskip

Following the alteration method as used for example by Schacht~\cite{Sch12}, we make three case distinctions. For this, we define the \emph{maximum density} of $A$ to be
\begin{equation} \label{eq:maxdensity}
	m(A) = \max_{\emptyset \neq Q \subseteq [m]} \frac{|Q|}{|Q|-r_Q}.
\end{equation}
Note the difference to the previously defined maximum $1$-density. The constant $c = c(A,\epsilon)$ will be stated later in context in Equation~\eqref{eq:smallc}. Note that we need to cover the whole range of $0 \leq p(n) \leq c \, n^{-1/m_1(A)}$ since we are not dealing with a monotone property.

\subsection*{Case 1.} Assume that $p \ll n^{-1/m(A)}$. Let $\emptyset \neq Q_1 \subseteq [m]$ be a set of column indices such that $|Q_1|/(|Q_1|-r_{Q_1}) = m(A)$. By Equation~\eqref{eq:nrofsolutions_upperbound} we now have 
\begin{equation*}
	\lim_{n \to \infty} \EE{| \, \mS_0(\Sub{A}{Q_1}) \cap [n]_p^m|} \leq \lim_{n \to \infty} n^{|Q_1|-r_{Q_1}} \, p^{|Q_1|} = 0.
\end{equation*}
Markov's Inequality and Lemma~\ref{lemma:nosolutions} therefore give us $\lim_{n \to \infty} \PP{| \, \mS_0(A) \cap [n]_p^m| \neq 0} = 0$, see also Rué et al.~\cite{RSZ15}. It clearly follows that we also have $\lim_{n \to \infty}  \PP{[n]_p \to_{\epsilon} A} = 0$ for any $\epsilon > 0$ if $p = p(n) \ll n^{-1/m(A)}$.

\subsection*{Case 2.} Assume that $n^{-1} \ll p \ll n^{-1/m_1(A)}$. Let $Q_2 \subseteq [m]$ be a set of column indices satisfying $|Q_2| \geq 2$ such that $(|Q_2|-1)/(|Q_2|-r_{Q_2}-1) = m_1(A)$ and $|Q_2|$ is as small as possible. Since $np \to \infty$ we have $\lim_{n \to \infty} \PP{|[n]_p| \geq np/2} = 1$ due to Chernoff's bound. The expected number of solutions in $[n]_p$ now is asymptotically smaller than the number of elements since by Equation~\eqref{eq:nrofsolutions_upperbound} we have 
\begin{equation*}
	\EE{| \, \mS_0(\Sub{A}{Q_2}) \cap [n]_p^m|} \leq m^m \, n^{|Q_2|-r_{Q_2}} \, p^{|Q_2|} = m^m \, np \: \big( n^{1/m_1(A)} \, p \big)^{|Q_2|-1} = o(np/2).
\end{equation*}
It follows by Markov's Inequality that for any subset of $[n]_p$ of positive density $\epsilon > 0$ we can remove one element per solution contained in this subset so that the resulting set is free of solutions while asymptotically almost surely still having positive density $\epsilon$ in $[n]_p$. Lemma~\ref{lemma:nosolutions} therefore gives us that we have $\lim_{n \to \infty} \PP{[n]_p \to_{\epsilon}^{\star} A} = 0$ for any $\epsilon > 0$ if $n^{-1} \ll p = p(n) \ll n^{-1/m_1(A)}$.

\subsection*{Case 3.} Lastly, assume that $n^{-1/m(A)} \ll p \leq c n^{-1/m_1(A)}$, where $c = c(A,\epsilon)$ will be given in Equation~\eqref{eq:smallc}. Due to Chernoff we again have $|[n]_p| \geq np/2$ asymptotically almost surely. We now observe that due to Lemma~\ref{lemma:nosolutions} we can replace $A$ with $\Sub{A}{Q_2}$ if necessary in order to guarantee that $(m-1)/(m-\rank{A}-1) = m_1(A)$ as well as $(|Q|-1)/(|Q|-r_Q-1) < m_1(A)$ for any $Q \subsetneq [m]$. We have previously observed that $\Sub{A}{Q_2}$ is again irredundant, positive and abundant. Let $X = (X_n)_{n \in \NN}$ denote the sequence of random variables counting the number of proper solutions in $[n]_p$, that is $X_n = | \, \mS_0(A) \cap [n]_p^m|$ for $n \in \NN$. For
\begin{equation} \label{eq:smallc}
	c = c(A,\epsilon) = \left( \frac{1-\epsilon}{4} \right)^{1/(m-1)}
\end{equation}
it follows by Equation~\eqref{eq:nrofsolutions_upperbound} that 
\begin{equation*}
	\EE{X_n} \leq n^{m-\rank{A}} \, p^m \leq np \, \big( n^{1/m_1(A)} \, p \big)^{m-1} \leq (1-\epsilon) \, np/4.
\end{equation*}
For a given vector $\bx = (x_1, \dots, x_m)$ we let $s(\bx) = \{ x_1, \dots, x_m \}$ denote the set of its entries. Using this we can now estimate the variance of $X_n$ by 
\begin{align*}
	\Var{X_n} -\EE{X_n} & \leq \sum_{\substack{\bx,\by \in \mS_0(A) \\ s(\bx) \cap \, s(\by) \neq \emptyset}} \!\!\!\!\!\!\!  p^{|s(\bx)| + |s(\by)| - |s(\bx) \cap \, s(\by)|} \\
	& = \sum_{\bx \in \mS_0(A)} p^{|s(\bx)|} \, \Bigg( \; \sum_{\emptyset \neq Q \subsetneq [m]} \Bigg[ \sum_{ \substack{\by \in \mS_0(A) \\ s(\bx) \cap s(\by) = s(\by^{\widebar{Q}})} } p^{|s(\by)| - |s(\bx) \cap \, s(\by)|} \; \Bigg] \quad + \!\!\! \sum_{\substack{\by \in \mS_0(A) \\ s(\bx) \subseteq s(\by)}} \!\!\!\! 1 \quad \Bigg) \\
	& \leq \sum_{\bx \in \mS_0(A)} p^{m} \Bigg( \sum_{\emptyset \neq Q \subsetneq [m]} m^{|\widebar{Q}|} \, \max_{\bb \in \ZZ^r} \Bigg( \, \sum_{\by' \in \mS_0(\Sub{A}{Q},\bb)} p^{|Q|} \Bigg) \: + \: m^m \Bigg) \\
	& \leq m^m \sum_{\bx \in \mS_0(A)} p^{m} \left( \sum_{\emptyset \neq Q \subsetneq [m]} \, n^{|Q|-r_Q} p^{|Q|} \: + \: 1 \right) \\
	& = O \Big( n^{m-\rank{A}} p^m \; \max_{\emptyset \neq Q \subsetneq [m]} \big( n^{|Q|-r_Q} p^{|Q|} \big) \Big).
\end{align*}
We observe that due to Equation~\eqref{eq:nrofsolutions_lowerbound} and the assumption on $m_1(A)$ we now have 
\begin{align*}
	\Var{X_n} & = o(\EE{X}^2).
\end{align*}
Chebyshev's inequality therefore gives us $\PP{|X - \EE{X}| \geq \EE{X}} = o(1)$ so that
\begin{equation*}
	\big| \, \mS_0(A) \cap [n]_p^m \big| \leq 2 \, \EE{X} = (1-\epsilon) \, np/2
\end{equation*}
asymptotically almost surely. It follows that, given a set of density $\epsilon$, we can remove one element from $[n]_p$ for each solution in $\mS_0(A) \cap [n]_p^m$ and asymptotically almost surely still be left with a set of density $\epsilon$, so that $\lim_{n \to \infty}  \PP{[n]_p \to_{\epsilon} A} = 0$ for any $\epsilon > 0$ if $n^{-1/m(A)} \ll p = p(n) \leq c \, n^{-1/m_1(A)}$ where $c = c(A,\epsilon)$ as given in Equation~\eqref{eq:smallc}.
\end{proof}

%%% BIBLIOGRAPHY
\bibliography{bib}
\bibliographystyle{ieeetr}

\end{document}